\newtheorem{proposition}{Proposition}[section]
\newtheorem{conjecture}[proposition]{Conjecture}
\newtheorem{theorem}[proposition]{Theorem}
\newtheorem{corollary}[proposition]{Corollary}
\newtheorem{lemma}[proposition]{Lemma}
\theoremstyle{definition}
\newtheorem{remark}[proposition]{Remark}
\newtheorem{definition}[proposition]{Definition}
\newcommand{\npmatrix}[1]{\left( \begin{matrix} #1 \end{matrix} \right)}
\newcommand{\R}{\mathbb{R}}
\let\C\relax
\newcommand{\C}{\mathbb{C}}
\newcommand{\mc}{\mathcal}
\DeclareMathOperator{\mr}{mr}
\DeclareMathOperator{\rk}{rank}
\DeclareMathOperator{\tr}{tr}  
\DeclareMathOperator{\rank}{rank}
\DeclareMathOperator{\diam}{diam}
\newcommand{\qtext}[1]{\quad\text{#1}\quad}
\newcommand{\qand}{\qtext{and}}
    \definecolor{helena}{rgb}{.2,.8,.4}
    \definecolor{polona}{rgb}{.8,.2,.2}
    \definecolor{rupert}{rgb}{0,.5,.5}
   \definecolor{todo}{rgb}{.2,.2,.8}
\DeclareMathOperator{\dup}{dup}
\DeclareMathOperator{\jdup}{jdup}
\begin{document}
\title[A Nordhaus-Gaddum conjecture for $q$]
{A Nordhaus-Gaddum conjecture\\ for the minimum number of distinct eigenvalues of a graph}
\author{Rupert H. Levene, Polona Oblak, Helena \v Smigoc}
\address[R.~H.~Levene and H.~\v Smigoc]{School of Mathematics and Statistics, University College Dublin, Belfield, Dublin 4, Ireland}
\email{rupert.levene@ucd.ie}\email{helena.smigoc@ucd.ie}
\address[P.~Oblak]{Faculty of Computer and Information Science, University of Ljubljana, Ve\v cna pot 113, SI-1000 Ljubljana, Slovenia}
\email{polona.oblak@fri.uni-lj.si}
\bigskip

\maketitle 

\begin{abstract}
  We propose a Nordhaus-Gaddum conjecture for $q(G)$, the
  minimum number of distinct eigenvalues of a symmetric matrix
  corresponding to a graph $G$: for every graph $G$ excluding four exceptions, we conjecture that
  $q(G)+q(G^c)\le |G|+2$, where $G^c$ is the complement of~$G$. We
  compute $q(G^c)$ for all trees and all graphs~$G$ with $q(G)=|G|-1$,
  and hence we verify the conjecture for trees, unicyclic graphs,
  graphs with $q(G) \leq 4$, and for graphs with $|G|\le7$.
\end{abstract}

\section{Introduction}

Let $G$ be a graph (by which we will always mean a finite, undirected, simple graph) with vertex set $V(G)=\{1,\dots,n\}$ and edge set~$E(G)$, and consider
$S(G)$, the set of all real symmetric $n \times n$ matrices
$A=(a_{ij})$ such that, for $i \neq j$, $a_{ij} \neq 0$ if and only if
$\{i,j\} \in E(G)$, with no restriction on the diagonal entries of
$A$.  The Inverse Eigenvalue Problem for Graphs (IEPG) is the problem
of characterising all lists of eigenvalues of matrices in
$S(G)$ for any given graph $G$. The IEPG motivates the study of
several parameters, for example, the widely studied minimum rank of a
graph:
 $$\mr (G)=\min \{\rk(A)\colon A \in S(G)\}$$
 and the minimum number of distinct eigenvalues of a graph:
$$q(G) = \min\{q(A)\colon A \in S(G)\}$$
where $q(A)$ denotes the number of distinct eigenvalues of a square matrix~$A$. The parameter $q(G)$ is the focus of a growing body of literature, e.g.,~\cite{ MR3118943, MR3665573, 2017arXiv170801821B,MR2735867,MR1899084}, and it is quickly becoming an important parameter in Spectral Graph Theory. 

Minimum rank has been extensively studied; for an overview, we refer
the reader to two surveys~%
\cite{MR2350678,2011arXiv1102.5142F}. One of the most prominent open questions
associated with $\mr (G)$ is the so-called graph complement conjecture
for minimum rank that arose from the American Institute of Mathematics
workshop \emph{Spectra of Families of Matrices described by Graphs,
  Digraphs, and Sign Patterns}~\cite{aims}. To state the conjecture,
recall that the \emph{complement} $G^c$ of a graph $G$ is the graph
with the same vertex set as~$G$, such that two vertices are adjacent
in $G^c$ if and only if they are not adjacent in $G$.
\begin{conjecture}[The graph complement conjecture for minimum rank]
For any graph $G$ we have
\begin{equation*}\label{mr-conjecture}
 \mr(G)+\mr(G^c)\leq |G|+2.
 \end{equation*}
\end{conjecture}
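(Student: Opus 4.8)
The final statement is the graph complement conjecture for minimum rank, which at the time of writing is open; so in place of a complete argument I outline the natural inductive strategy and indicate where it stalls.

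The plan is to induct on $n=|G|$ by vertex deletion, the cases $n\le 2$ being immediate. For the inductive step, recall the \emph{rank spread} of a vertex $v$ in $G$, namely $r_v(G):=\mr(G)-\mr(G-v)$, which always satisfies $0\le r_v(G)\le 2$ (a principal submatrix never has larger rank, and deleting one row and column drops the rank by at most two), and note that $(G-v)^c=G^c-v$. Applying the inductive hypothesis to the graph $G-v$ on $n-1$ vertices gives, for every vertex $v$,
\[ \mr(G)+\mr(G^c)=\bigl(\mr(G-v)+\mr(G^c-v)\bigr)+r_v(G)+r_v(G^c)\le (n+1)+r_v(G)+r_v(G^c). \]
Thus it would suffice to produce a single vertex $v$ with $r_v(G)+r_v(G^c)\le 1$.

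Before searching for such a vertex, I would reduce to the case where both $G$ and $G^c$ are connected: if $G$ is disconnected then $G^c$ is a join of the complements of the components of $G$, and the known behaviour of $\mr$ under joins, together with additivity of $\mr$ over connected components, disposes of the conjecture in that case. With $G$ and $G^c$ both connected, I would then try to exploit the structure forced by a large rank spread: $r_v(G)=2$ makes $v$ a cut vertex of a restrictive kind, pinning down how the components of $G-v$ attach to $v$ and how $\mr$ distributes among them. The goal would be to show that no vertex can simultaneously be ``rank spread $2$'' in $G$ and in $G^c$, because the connectivity demands point in opposite directions in the two graphs.

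The main obstacle is precisely this last step. There is no known way to guarantee, for an arbitrary graph, a vertex $v$ with $r_v(G)+r_v(G^c)\le 1$, and the induction collapses if every vertex has rank spread $2$ in both $G$ and $G^c$; ruling that possibility out seems to require a genuinely new idea. Accordingly, I would not expect the induction to close in full generality, and would instead run it on restricted classes --- trees, unicyclic graphs, graphs of small minimum rank, or graphs of bounded tree-width --- where the rank-spread bookkeeping can be kept under control, or first establish the positive semidefinite analogue and attempt to transfer information back to the general problem.
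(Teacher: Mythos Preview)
The statement you were asked to address is not a theorem in the paper: it is the graph complement conjecture for minimum rank, stated explicitly as a \emph{conjecture} and described as ``remain[ing] open in general.'' The paper offers no proof of it; it appears only as motivation for the authors' own Nordhaus--Gaddum conjecture for $q$. Your opening sentence correctly identifies this, and declining to claim a proof is the right call.

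A couple of remarks on the sketch you do give. First, the reduction to the case where both $G$ and $G^c$ are connected is not as clean as you suggest: the minimum rank of a join is not determined in any simple closed form by the minimum ranks of the pieces, so ``the known behaviour of $\mr$ under joins \ldots\ disposes of the conjecture in that case'' overstates what is actually available. Second, the assertion that $r_v(G)=2$ forces $v$ to be a cut vertex is not correct in general; rank spread $2$ at $v$ is a spectral condition on matrices in $S(G)$ and does not by itself impose that kind of connectivity constraint on $G$. These are exactly the sorts of gaps that make the inductive strategy stall, as you yourself acknowledge, so they do not undermine your overall assessment --- but you should not present either step as routine.
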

It has been proven that this conjecture is satisfied by several families of graphs~\cite{MR2917415,MR3194944}, but it remains open in general. Note that equality in the conjecture is achieved, for example, for paths. The conjecture is an example of a Nordhaus-Gaddum type problem. Nordhaus and Gaddum~\cite{MR0078685} bounded the sum and the product
of the chromatic number of a graph and its complement, in terms of the order of the graph, and since then relations of this type for other graph invariants have come to be associated with their names~\cite{MR3526413,MR3015299,MR1826471}. 

It is immediate~\cite[Proposition~2.5]{MR3118943} that 
\begin{equation}\label{eq:mrbound}
  q(G)\le \mr(G)+1.
\end{equation}
As a result, for those graphs~$G$ satisfying the graph complement conjecture for minimum rank, we have
\[ q(G)+q(G^c)\leq |G|+4.\]
We conjecture that with only four exceptions, this Nordhaus-Gaddum type bound for $q(G)$ can be improved, as follows.

\begin{conjecture}\label{conjecture}
For any graph~$G$, the inequality
\begin{equation}\label{eq:conjecture}
 q(G)+q(G^c)\leq |G|+2
 \end{equation}
holds if and only if $G$ is not one of the four graphs $P_4$, $P_5$, $P_5^c$, and $T_{2,2}$ (see Figure~\ref{fig:exceptions}).
\end{conjecture}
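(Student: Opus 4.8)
The statement is an equivalence, so I would prove the two implications separately. The reverse implication—that each of $P_4$, $P_5$, $P_5^c$, and $T_{2,2}$ \emph{violates} \eqref{eq:conjecture}—is a finite verification, while the forward implication—that \eqref{eq:conjecture} holds for \emph{every} other graph—is the real substance and must be established for all $G$, not merely for a convenient subfamily. I would keep these two tasks sharply separated and concentrate the work on the forward direction.

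For the reverse implication, for each of the four named graphs one computes $q(G)$ and $q(G^c)$ and checks that the sum strictly exceeds $|G|+2$. Since $q(P_n)=n$ for every path (paths being the unique graphs attaining $q(G)=|G|$) and $P_4$ is self-complementary, we get $q(P_4)+q(P_4^c)=4+4=8>6=|P_4|+2$. Similarly $q(P_5)=5$ and a direct check gives $q(P_5^c)=3$, so $q(P_5)+q(P_5^c)=8>7$, which disposes of both $P_5$ and its complement $P_5^c$ at once; a single further computation handles $T_{2,2}$. This direction needs no new idea beyond evaluating $q$ on a handful of small, explicitly drawn graphs.

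For the forward implication I would combine two tools applied to an arbitrary graph $G$. The first is the linear-algebraic route: from $q(G)\le\mr(G)+1$ (inequality~\eqref{eq:mrbound}) and the graph complement conjecture for minimum rank one gets $q(G)+q(G^c)\le\mr(G)+\mr(G^c)+2\le|G|+4$, which is too weak by exactly $2$, and the goal is to recover these two units. The second is a structural reduction organised by the size of $q(G)$, where without loss of generality $q(G)\ge q(G^c)$. The extreme case $q(G)=|G|$ occurs precisely when $G$ is a path $P_n$, and there I would bound $q(G^c)$ directly by exhibiting, for $n\ge 6$, a matrix in $S(P_n^c)$ with at most two distinct eigenvalues, giving $q(G)+q(G^c)\le n+2$; the small cases $n=4,5$ are exactly the exceptions $P_4,P_5$. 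The adjacent case $q(G)=|G|-1$ is governed by a characterisation of the graphs attaining it, for which $q(G^c)$ can be evaluated and shown to be at most $3$ (with $T_{2,2}$ and $P_5^c$ surfacing as the failures), again yielding the bound.

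What remains—and what I expect to be the main obstacle—is the intermediate regime in which \emph{both} $q(G)$ and $q(G^c)$ are large while neither reaches the maximum $|G|$. Here the extremal characterisations do not apply and the minimum-rank estimate alone does not recover the required two units. The heart of the forward implication for all $G$ is thus a single trade-off principle: a large value of $q(G)$ must force a correspondingly small value of $q(G^c)$, ruling out any graph beyond the small exceptions in which both parameters are simultaneously large once $|G|\ge 6$. I would attempt to prove this trade-off in full generality; failing a uniform argument, the realistic route is to establish it for progressively larger classes of graphs—computing $q(G^c)$ exactly whenever $q(G)$ is as large as the structure of $G$ permits—so that the gap between the verified classes and the general statement narrows to precisely this intermediate case, which is exactly the difficulty that keeps \eqref{eq:conjecture} conjectural in general.
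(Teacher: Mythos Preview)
The statement you are attempting to prove is labelled \emph{Conjecture} in the paper, and the paper does not supply a proof of it; it establishes the inequality only for certain families (trees, unicyclic graphs, graphs with $q(G)\le4$ or $q(G)\ge|G|-1$, and graphs on at most $7$ vertices). So there is no ``paper's own proof'' to match, and any complete argument you produce would be a genuine resolution of an open problem.

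Your proposal does not achieve this, and in fact you say so yourself in the last paragraph. Two concrete points deserve emphasis. First, the step ``from $q(G)\le\mr(G)+1$ and the graph complement conjecture for minimum rank one gets $q(G)+q(G^c)\le|G|+4$'' invokes Conjecture~1.1, which is itself open; you cannot use it as an ingredient, and even if you could, as you note, it is too weak by~$2$. Second, the structural reduction you outline for $q(G)\in\{|G|,|G|-1\}$ is exactly what the paper carries out in Sections~\ref{sec:trees}--\ref{q=n-1} (Theorem~\ref{thm:n-1}), so that part of your plan is sound and matches the paper; but once $q(G)\le|G|-2$ and $q(G^c)\ge5$ you have no mechanism at all. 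Your suggested ``trade-off principle''---that large $q(G)$ forces small $q(G^c)$---is precisely the content of the conjecture restated, not a route to proving it.

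In short: your reverse implication is fine and your treatment of the extreme cases $q(G)\ge|G|-1$ aligns with the paper's partial results, but the forward implication in general is not proved, the appeal to the minimum-rank conjecture is illegitimate, and the ``intermediate regime'' you flag as the main obstacle is the entire open content of the conjecture.
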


\begin{figure}[hbt]\label{fig:exceptions}
\centering
\begin{tabular}{ccccccc}	
 \begin{tikzpicture}[style=thick, scale=0.7]
		\draw \foreach \x in {1,...,3} {
				(\x,0) -- (\x + 1,0) };
		\draw[fill=white] \foreach \x in {1,...,4} {
		                    (\x,0) circle (1mm)
		};		
		\end{tikzpicture}&\qquad&
\begin{tikzpicture}[style=thick, scale=0.7]
		\draw \foreach \x in {1,...,4} {
				(\x,0) -- (\x + 1,0) };
		\draw[fill=white] \foreach \x in {1,...,5} {
		                    (\x,0) circle (1mm)
		};		
		\end{tikzpicture}&\qquad&
\begin{tikzpicture}[style=thick, scale=0.7]
		\draw (1,1)--(1,0)--(2,0)--(2,1)--(1,1);
                \draw (2,0)--(3,.5)--(2,1);
		\draw[fill=white] (1,0)  circle (1mm)(2,0)  circle (1mm) (2,1)  circle (1mm) (1,1)  circle (1mm) (3,.5) circle (1mm) (2,1) circle (1mm);
		\end{tikzpicture}&\qquad&
\begin{tikzpicture}[style=thick, scale=0.7]
		\draw \foreach \x in {2,3,4} {
				(\x,0) -- (\x + 1,0) };
		\draw (3,0)--(3.5,1)--(4,0);
		\draw[fill=white] \foreach \x in {2,3,...,5} {
		                    (\x,0) circle (1mm)
		};		
		\draw[fill=white] (3.5,1) circle (1mm);
		\end{tikzpicture}\\
$P_4$&&$P_5$&&$P_5^c$&&$T_{2,2}$\\
\end{tabular}
\caption{Four  graphs which do not satisfy inequality \eqref{eq:conjecture}. In all four cases, we have $q(G)+q(G^c)=8$.}
\end{figure}
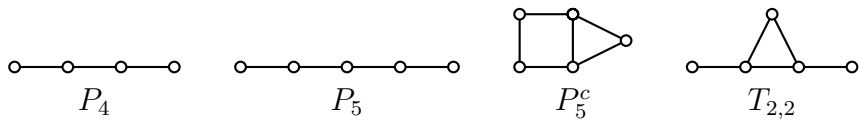

As we will see in Section~\ref{sec:trees}, inequality~\eqref{eq:conjecture} is saturated for all paths on six or more vertices, so $2$ cannot be replaced by a smaller constant on the right hand side.

Motivated by Conjecture~\ref{conjecture}, the main aim of this paper is to determine $q(G^c)$ for some families of graphs. After recalling some notation and a selection of preliminary results in the remainder of this introduction, we establish some constructive tools that are useful in determining $q(G)$ in Section~\ref{contructions}. In Section~\ref{sec:trees} we compute $q(G^c)$ for trees, 
and in Section~\ref{q=n-1}  we compute $q(G^c)$ for graphs $G$ with $q(G) \geq |G|-1$.
In Section~\ref{sec:conjecture} we bring together the results of previous sections to examine Conjecture~\ref{conjecture}. In particular, we prove that the conjecture holds for all graphs
$G$ with $q(G) \geq |G|-1$ (Theorem~\ref{thm:n-1}); for bipartite graphs, for graphs with small minimum rank and for certain joins and unions (Corollary~\ref{cor:bipartite}); for unicyclic graphs (Theorem~\ref{unicyclic}); and for all graphs with at most 7 vertices (Theorem~\ref{lemma:67}).%

\subsection{Notation}

Below we collect some standard notation and terminology for graphs and matrices;  some of it has already been used above. For a graph $G=(V(G),E(G))$, the \emph{order} of~$G$ is 
$|G|=|V(G)|$. We will routinely identify isomorphic graphs without further comment.
We say that two distinct vertices $x,y\in V(G)$ are \emph{adjacent} in~$G$ if $\{x,y\}\in E(G)$.  A sequence of~$k$ edges $\{x_0,x_1\}, \{x_1,x_2\},\ldots,\{x_{k-1},x_{k}\}$ in a graph is called 
\emph{a path of length $k$} between the vertices~$x_0$ and $x_k$. The \emph{distance} between two vertices is the length of the shortest path between them and the \emph{diameter} $\diam(G)$ is the largest distance between any two vertices of the graph. If only one shortest path exists between two vertices, it is called a \emph{unique shortest path} between them.

The \emph{open neighbourhood} of a vertex $v$ in graph $G$ is denoted
by %
$N(v)$, and consists of all vertices at distance 1 from vertex $v$,
i.e., all vertices adjacent to $v$ in $G$. The \emph{degree} of~$v$ in~$G$ is $|N(v)|$, and a \emph{leaf} of~$G$ is a vertex of degree~$1$.

The \emph{complement}  $G^c$ of a graph $G$ is the graph with vertex set $V(G)$ such that two vertices are adjacent in $G^c$ if and only if they are not adjacent in $G$.  The \emph{join} $G \vee H$ of $G$ and $H$ is the disjoint graph union $G \cup H$ together with all the possible edges joining the vertices in $G$ to the vertices in $H$.
 The \emph{complete graph} on $n$ vertices  will be denoted by $K_n$ and the \emph{complete bipartite graph} on two disjoint sets of cardinality $m$ and $n$ by $K_{m,n}$. We write $P_n$ for the \emph{path} on~$n$ vertices, and note in passing that for any graph~$G$, we have $q(G)=|G|$ if and only if $G=P_{|G|}$.
 The \emph{chromatic number}~$\chi(G)$ of a graph~$G$ is the smallest number of colours needed to colour its vertices in such way that no two adjacent vertices have the same colour; this is called a $k$-colouring.  We say that $G$ is \emph{$2$-colourable} if its chromatic number is equal to 2, or equivalently if $G$ is bipartite, and in this case we take the colours to be black and white.
Enumerated graphs (such as G188) follow the numbering scheme of~\cite{Read:2005:AG:1051300}.

An \emph{induced subgraph} of~$G$ is a graph $H=(V(H),E(H))$ where
$V(H)\subseteq V(G)$ and $E(H)$ contains every edge in~$E(G)$ whose
endpoints are both in $V(H)$. On the other hand, a graph
$H=(V(H),E(H))$ is simply a \emph{subgraph} of $G$ (written
$H\subseteq G$ or $G\supseteq H$) if $V(H)\subseteq V(G)$ and
$E(H)\subseteq E(G)$. Most subgraphs we will consider will be of the
latter type. A subgraph~$H$ is a \emph{spanning subgraph} of $G$ if
$V(H)=V(G)$. We write $G\setminus\{e\}$ for the spanning subgraph of $G$ consisting of $G$ with one edge~$e$ removed. Similarly, if $v\in V(G)$, then $G\setminus\{v\}$
denotes the induced subgraph of $G$ with vertex set
$V(H)=V(G)\setminus\{v\}$.

By $I_n$ we denote the $n \times n$ identity matrix and by $0_{m,n}$
we denote the $m \times n$ zero matrix. We omit the indices if the
size of the matrix is clear from the context. A matrix is said to be
\emph{nonnegative} if each of its entries is a nonnegative real
number. For any matrix~$B$, we write $\|B\|$ for the spectral norm
of~$B$, that is, the largest singular value of~$B$. We
write~$\sigma(A)$ for the multiset of eigenvalues of a square
matrix~$A$. The \emph{multiplicity list} of $A$ is the multiset of
cardinality $q(A)$ containing the multiplicity in~$\sigma(A)$ of each
of the distinct eigenvalues of~$A$. We will use $\circ$ to denote the
Hadamard product of matrices, and $[A,X]$ to denote the commutator $AX-XA$
of two $n\times n$ matrices $A$ and $X$. The \emph{pattern} of a
matrix~$A=(a_{ij})$ is the set of pairs~$(i,j)$ so that
$a_{ij}\ne0$. Finally, recall that a square matrix~$A$ is
\emph{reducible} if it may be put into block $2\times 2$
upper-triangular form (in the symmetric case, a block $2\times 2$
direct sum), where neither block is trivial, by conjugating~$A$ with a
permutation matrix; otherwise,~$A$ is said to be \emph{irreducible}.

\subsection{Preliminaries}

Powerful tools to advance the IEPG in general, and the minimum distinct eigenvalue problem in particular, were introduced in~\cite{MR3665573}. We follow~\cite{MR3665573}, and say that an $n\times n$ symmetric matrix $A$ has 
the \emph{Strong Spectral Property} (SSP) if 
 the zero matrix is the only symmetric matrix $X$ for which
 $$A\circ X=I \circ X=0 \, \text{ and }\, [A,X]=0,$$
 and we say that $A$ satisfies the Strong Multiplicity Property (SMP) if the zero matrix is the only symmetric matrix $X$ with the properties
 $$A\circ X=I \circ X=0, \, [A,X]=0 \, \text{ and }\, \tr(A^iX)=0 \text{ for } i=0,\ldots,n-1.$$
 The SSP and the SMP are generalisations of the Strong Arnold Property
 (SAP) introduced in~\cite{MR1673503}.  The SMP is stronger than the
 SAP and weaker than the SSP; for matrices with two distinct
 eigenvalues, the SMP and SSP coincide.

Below we list some results from~\cite{MR3665573} %
on the SMP and the SSP that illustrate the importance of those properties in the IEPG, and that we will need later in this work.

\begin{theorem} (\cite[Theorem~34]{MR3665573})\label{thm:oplus}
  For $i = 1, 2$, let $A_i$ be a symmetric real $n_i\times n_i$ matrix. Then $A = A_1 \oplus A_2$ has the SSP (respectively, SMP) if and only if both $A_1$ and $A_2$ have the SSP (respectively, SMP) and $\sigma(A_1) \cap \sigma(A_2) = \emptyset$. 
\end{theorem}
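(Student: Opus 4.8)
The plan is to write an arbitrary symmetric matrix~$X$ in block form conformally with the decomposition $A=A_1\oplus A_2$: diagonal blocks $X_{11}$ (size $n_1$) and $X_{22}$ (size $n_2$), and off-diagonal block $X_{12}$ in the $(1,2)$-position, so that the $(2,1)$-block is $X_{12}^T$. Put $n=n_1+n_2$. I would then unwind each defining condition of the SSP block by block. Since the off-diagonal blocks of~$A$ vanish, $A\circ X=0$ is equivalent to $A_1\circ X_{11}=0$ and $A_2\circ X_{22}=0$ (and imposes nothing on~$X_{12}$), while $I\circ X=0$ just says $I\circ X_{11}=0$ and $I\circ X_{22}=0$. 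A direct computation of $AX-XA$ shows that $[A,X]=0$ is equivalent to the three conditions $[A_1,X_{11}]=0$, $[A_2,X_{22}]=0$ and the Sylvester equation $A_1X_{12}=X_{12}A_2$. The bridge to the spectral hypothesis is the standard fact that the linear map $Y\mapsto A_1Y-YA_2$ is invertible if and only if $\sigma(A_1)\cap\sigma(A_2)=\emptyset$.

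With this in hand the SSP statement follows quickly. If $A_1,A_2$ have the SSP and $\sigma(A_1)\cap\sigma(A_2)=\emptyset$, then the Sylvester equation forces $X_{12}=0$, after which the SSP of $A_i$ applied to $X_{ii}$ forces $X_{11}=X_{22}=0$; hence $X=0$ and $A$ has the SSP. Conversely, if $A$ has the SSP, any nonzero symmetric $X_{11}$ witnessing a failure of the SSP for $A_1$ yields the nonzero witness $X_{11}\oplus 0_{n_2}$ for $A$, a contradiction, so each $A_i$ has the SSP. And if $\lambda\in\sigma(A_1)\cap\sigma(A_2)$ with $A_1u=\lambda u$, $A_2v=\lambda v$, then the symmetric matrix with $(1,2)$-block $uv^T$, $(2,1)$-block $vu^T$ and zero diagonal blocks is a nonzero solution of $A\circ X=I\circ X=0$, $[A,X]=0$ (the commutator vanishes because $A_1u=\lambda u$ and $v^TA_2=\lambda v^T$), so $A$ fails the SSP; hence the spectra must be disjoint.

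For the SMP statement I would reuse the block decomposition and add the trace conditions. Since $\tr(A^iX)=\tr(A_1^iX_{11})+\tr(A_2^iX_{22})$, these do not obviously decouple. The key step, given $\sigma(A_1)\cap\sigma(A_2)=\emptyset$ and $[A_1,X_{11}]=[A_2,X_{22}]=0$ (both already available), is that each $A_j$ is diagonalisable, so $\tr(A_j^iX_{jj})=\sum_k \nu_k^i\,\tr(P_kX_{jj})$ where $\nu_k$ runs over the distinct eigenvalues of $A_j$ and $P_k$ over the corresponding spectral projections. Summing over $j=1,2$, the equations $\tr(A^iX)=0$ for $i=0,\dots,n-1$ become a homogeneous linear system whose coefficient matrix is the $n\times q(A)$ Vandermonde matrix on the $q(A)=q(A_1)+q(A_2)\le n$ distinct nodes $\nu_k$ (distinct across the two blocks precisely because the spectra are disjoint). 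That matrix has full column rank, so every $\tr(P_kX_{jj})=0$, hence $\tr(A_j^iX_{jj})=0$ for all $i\ge0$; in particular the SMP hypotheses for $A_j$ hold for $X_{jj}$, giving $X_{11}=X_{22}=0$, and as before $X_{12}=0$. The converse direction is exactly as for the SSP, using that $\tr(A_1^iX_{11})=0$ for $0\le i\le n_1-1$ implies it for all $i\ge0$ by the Cayley--Hamilton theorem, and that the rank-one witness above also satisfies every trace condition since $A^iX$ has zero diagonal blocks.

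I expect the only genuine obstacle to be the ``if'' direction of the SMP statement: descending the $n$ trace conditions for~$A$ to the trace conditions for each~$A_j$ is the one place where something beyond routine block-matrix algebra is needed, and the Vandermonde/spectral-projection argument, crucially exploiting $q(A_1)+q(A_2)\le n$, is what makes it go through. The remainder is bookkeeping.
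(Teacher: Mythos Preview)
The paper does not prove this theorem at all: it is quoted verbatim as \cite[Theorem~34]{MR3665573} and used as a black box, so there is no ``paper's own proof'' to compare against. Your argument is therefore an independent proof of a cited result rather than a reconstruction.

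That said, your proof is correct and is essentially the standard one. The block decomposition and Sylvester-equation reasoning handle the SSP case cleanly. For the SMP case, your Vandermonde argument is the natural way to decouple the trace conditions; one small remark is that the identity $\tr(A_j^iX_{jj})=\sum_k\nu_k^i\,\tr(P_kX_{jj})$ follows directly from the spectral decomposition of $A_j^i$ and does not actually need $[A_j,X_{jj}]=0$, so you could streamline the exposition slightly there. The converse directions are handled correctly, including the Cayley--Hamilton step extending the trace conditions from $i\le n_1-1$ to all~$i$.
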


The following is a special case %
of~\cite[Theorem~36]{MR3665573}:

\begin{theorem}%
\label{thm:smp}
Let $G$ be a graph and let $\tilde G$ be a spanning subgraph of $G$. If $\tilde A \in S(\tilde G)$ has the SSP (respectively, SMP), then there exists $A \in S(G)$ with the SSP (respectively, SMP) so that $\sigma(A)=\sigma(\tilde A)$.
\end{theorem}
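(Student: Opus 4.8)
The plan is to produce $A$ as a point of a transversal intersection inside the space $\mathcal{S}$ of real symmetric $n\times n$ matrices, where $n=|G|$: a smooth manifold encoding the spectral constraint, and a linear subspace encoding the pattern constraint. Equip $\mathcal{S}$ with the trace inner product $\langle X,Y\rangle=\tr(XY)$, and for a graph $H$ on the vertex set of $G$ let $Z_H\subseteq\mathcal{S}$ be the subspace of matrices vanishing in every off-diagonal position outside $E(H)$; then $S(H)$ is the open dense subset of $Z_H$ of matrices whose off-diagonal support is \emph{exactly} $E(H)$, and $Z_H^{\perp}$ is the space of symmetric matrices with zero diagonal supported on the non-edges of $H$. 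Since $\tilde G$ is a spanning subgraph of $G$ we have $Z_{\tilde G}\subseteq Z_G$ and $Z_G^{\perp}\subseteq Z_{\tilde G}^{\perp}$. For the SSP case I would take $\mathcal{M}$ to be the set of symmetric matrices cospectral with $\tilde A$ (with multiplicities); this is a single orbit of the orthogonal conjugation action, hence a smooth embedded submanifold of $\mathcal{S}$ with $T_{\tilde A}\mathcal{M}=\{[\tilde A,K]\colon K^{T}=-K\}$ and $(T_{\tilde A}\mathcal{M})^{\perp}=\{X\in\mathcal{S}\colon [\tilde A,X]=0\}$. The SSP of $\tilde A$ asserts precisely that $(T_{\tilde A}\mathcal{M})^{\perp}\cap Z_{\tilde G}^{\perp}=\{0\}$, hence a fortiori $(T_{\tilde A}\mathcal{M})^{\perp}\cap Z_G^{\perp}=\{0\}$, i.e.\ $T_{\tilde A}\mathcal{M}+Z_G=\mathcal{S}$. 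Thus $\mathcal{M}$ meets the affine subspace $Z_G$ transversally at $\tilde A$, and near $\tilde A$ the set $\mathcal{N}:=\mathcal{M}\cap Z_G$ is a smooth submanifold with $T_{\tilde A}\mathcal{N}=T_{\tilde A}\mathcal{M}\cap Z_G$.

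Every point of $\mathcal{N}$ is cospectral with $\tilde A$ and has off-diagonal support contained in $E(G)$, and $\tilde A\in\mathcal{N}$; since the entries indexed by $E(\tilde G)$ stay nonzero near $\tilde A$ by continuity, it suffices to find a point of $\mathcal{N}$ near $\tilde A$ at which the entries indexed by $E(G)\setminus E(\tilde G)$ are also nonzero. The key step is the claim that for each $e=\{i,j\}\in E(G)\setminus E(\tilde G)$ the functional $B\mapsto B_{ij}$ does not vanish identically on $T_{\tilde A}\mathcal{N}$: granting this, the locus in $\mathcal{N}$ near $\tilde A$ on which one of these finitely many functionals vanishes is a finite union of proper submanifolds, so any $A$ in its complement sufficiently close to $\tilde A$ lies in $S(G)$ with $\sigma(A)=\sigma(\tilde A)$; such an $A$ moreover has the SSP, because among matrices with off-diagonal support $E(G)$ the SSP is an open condition (the map $X\mapsto[A,X]$ on the fixed space $Z_G^{\perp}$ is injective at $A=\tilde A$ by the SSP of $\tilde A$ together with $Z_G^{\perp}\subseteq Z_{\tilde G}^{\perp}$, hence for $A$ near $\tilde A$). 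To prove the claim I would argue by contradiction: let $E_{ij}$ be the symmetric matrix with ones in positions $(i,j),(j,i)$ and zeros elsewhere, so $B\mapsto B_{ij}=\tfrac12\langle E_{ij},B\rangle$; if this vanished on $T_{\tilde A}\mathcal{N}=\bigl((T_{\tilde A}\mathcal{M})^{\perp}+Z_G^{\perp}\bigr)^{\perp}$ then $E_{ij}=Y+W$ for some $Y$ with $[\tilde A,Y]=0$ and some $W\in Z_G^{\perp}$. Then $Y=E_{ij}-W$ has zero diagonal and vanishes at every off-diagonal position indexed by an edge of $\tilde G$ (there $W$ vanishes because $E(\tilde G)\subseteq E(G)$, and $E_{ij}$ vanishes because $\{i,j\}\notin E(\tilde G)$), so $\tilde A\circ Y=I\circ Y=0$ and $[\tilde A,Y]=0$; the SSP of $\tilde A$ then forces $Y=0$, contradicting $Y_{ij}=(E_{ij})_{ij}-W_{ij}=1$ (note $W_{ij}=0$ as $\{i,j\}\in E(G)$).

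For the SMP case the same scheme applies with $\mathcal{M}$ replaced by the larger smooth manifold $\mathcal{M}^{\oml}$ of all symmetric matrices whose multiplicity list equals that, $\oml$, of $\tilde A$; here $T_{\tilde A}\mathcal{M}^{\oml}=\{[\tilde A,K]\colon K^{T}=-K\}+\operatorname{span}\{I,\tilde A,\dots,\tilde A^{n-1}\}$ and $(T_{\tilde A}\mathcal{M}^{\oml})^{\perp}=\{X\in\mathcal{S}\colon[\tilde A,X]=0,\ \tr(\tilde A^{p}X)=0\text{ for }0\le p\le n-1\}$, and the SMP of $\tilde A$ is exactly the statement that this meets $Z_{\tilde G}^{\perp}$ only in $0$. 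Running the transversality argument and the non-vanishing argument verbatim (the matrix $Y$ produced now additionally satisfies the trace conditions, so the SMP again forces $Y=0$) yields a matrix $A\in S(G)$ with the SMP and multiplicity list $\oml$. The remaining point — which I expect to be the main obstacle — is that this only makes $\sigma(A)$ and $\sigma(\tilde A)$ have the same multiplicity list, not equality. To upgrade to $\sigma(A)=\sigma(\tilde A)$, I would try to show that the set of spectra with multiplicity list $\oml$ realised by some matrix in $S(G)$ with the SMP is a nonempty, open and closed subset of the connected set of all spectra with multiplicity list $\oml$, and hence is everything; openness should come from a local continuation argument in the spirit of the one above, and one should also exploit that the SMP is preserved under the affine substitutions $A\mapsto\alpha A+\beta I$ with $\alpha\neq0$. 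Making the openness and closedness precise is where I expect the real work to lie.
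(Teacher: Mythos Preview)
The paper does not give its own proof of this statement; it is simply quoted as a special case of \cite[Theorem~36]{MR3665573}. Your transversality approach is precisely the one taken in that reference: read SSP (respectively SMP) as the assertion that the isospectral orbit (respectively the fixed-multiplicity-list manifold) meets the pattern space $Z_{\tilde G}$ transversally, and then perturb inside the larger intersection with $Z_G$.

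For the SSP case your argument is complete and correct. In particular, the contradiction you derive from $E_{ij}=Y+W$ is clean, and it also shows automatically that $\dim(T_{\tilde A}\mathcal N)>0$ whenever $E(G)\setminus E(\tilde G)\ne\emptyset$, so the ``finite union of proper submanifolds'' step is safe.

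For the SMP case your diagnosis of the difficulty is exactly right: working in $\mathcal M^{\oml}$ only pins down the multiplicity list, and attempting instead to work in the fibre $\Phi^{-1}(\sigma(\tilde A))=\mathcal M\cap Z_G$ of the eigenvalue submersion fails because the non-vanishing step for $E_{ij}$ on $T_{\tilde A}\mathcal M\cap Z_G$ would require the full SSP (the trace conditions $\tr(\tilde A^{p}Y)=0$ cannot be verified for $Y=E_{ij}-W$). So an upgrade step really is needed. Where your sketch is shakiest is the proposed ``closedness'' of the set of realisable spectra: a sequence $A_k\in S(G)$ with the SMP and $\sigma(A_k)\to\Lambda$ does stay in a compact set (the spectral norm is bounded by $\max|\Lambda|$), so a subsequential limit $A^\ast\in Z_G$ with $\sigma(A^\ast)=\Lambda$ exists, but $A^\ast$ may drop an edge entry to zero or lose the SMP, so naive closedness can fail. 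The argument in the cited reference does use openness and affine invariance as you anticipate, but the globalisation is carried out via a continuation argument rather than a bare open-and-closed one; this is indeed where the work lies, and your proposal as written does not yet close that gap.
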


Motivated by the theorems above, the following parameters were introduced in~\cite{MR3665573}:
\begin{align*}
 q_M(G) &= \min\{q(A)\colon A \in S(G) \text{ and } A \text { has  the SMP}\}, \\
q_S(G) &= \min\{q(A)\colon A \in S(G) \text{ and } A \text { has  the SSP}\}.
\end{align*}
 Note that $$q(G) \leq q_M(G) \leq q_S(G).$$

 A lower bound on $q$, that is achieved for several families of graphs, was given in~\cite[Theorem~3.1]{MR2735867} and~\cite[Theorem~3.2]{MR3118943}. There it was shown that 
 \begin{equation}\label{eq:Fallat}
 q(G) \geq d(G)+1,
 \end{equation}
 where $d(G)$ is the number of edges in the longest unique shortest path between any two vertices in $G$ (if $G$ has no edges, we define $d(G)=0$).

We will also use the following upper bound, which appears in~\cite[Corollary~49]{MR3665573}. 
Let $c(G)$ denote the circumference of $G$, i.e., the number of vertices of the largest cycle which is a subgraph of $G$. Then
\begin{equation}\label{eq:cyclebound}
q(G) \leq |G|-\left\lfloor \frac{c(G)}{2}\right\rfloor.
\end{equation}
 
\bigskip

\section{Constructions}\label{contructions}
\newcommand{\M}[2][B]{M(#1,#2)}
\newcommand{\Mhat}[3][B]{\widehat{M}(#1,#3,#2)}

Recall that a symmetric matrix~$M$ is orthogonal if $M^2=I$, or equivalently, if each of its eigenvalues is either $-1$ or $1$. We start this section with two related constructions that each give a family of orthogonal matrices whose patterns we will often be able to control.

Let $B$ be an $m \times n$ matrix with $\|B\|<1$. Choose $\alpha \in [-1,1]$ and define
\begin{equation}\label{eq:M}
  \M{\alpha}=\npmatrix{\sqrt{I_m-\alpha^2 BB^T} & \alpha B \\ \alpha B^T & -\sqrt{I_n-\alpha^2 B^TB}}.
 \end{equation}
 Note that $\M{\alpha}^2=I_{m+n}$. 
 
 Assuming that $\rank( B)< m$, let $v$ be a unit vector such that $B^Tv=0.$ Then the matrix 
\begin{equation}\label{eq:N}
\Mhat{\alpha}{v}=\begin{pmatrix}0&v^T&0_{1,n}\\v & \sqrt{I_{m}-(\alpha^2 BB^T+vv^T)} &\alpha B\\0_{n,1}& \alpha B^T&-\sqrt{I_n-\alpha^2 B^TB}
    \end{pmatrix}
\end{equation}
  is well defined for $\alpha \in  [-1,1]$, and $\Mhat{\alpha}{v}^2=I_{1+m+n}.$
 
In our applications, we will first fix the matrix $B$ and then demand that the diagonal blocks of $\M{\alpha}$ and $\Mhat{\alpha}{v}$ containing the square roots do not have any zero elements. To make this precise we need the following definition. 

\begin{definition}
Let $A$ be an $n \times n$ matrix. We say that $A$ is \emph{generalised irreducible} if for every pair $(i,j)$, $i,j=1,2,\ldots,n$, there exists a positive integer $k$ such that $(A^k)_{ij} \neq 0$. 
\end{definition}

The simplest examples of  generalised irreducible matrices are matrices with no zero elements, and nonnegative irreducible matrices. 

\begin{lemma}
Let $A$ be a symmetric $n \times n$ generalised irreducible matrix with $\|A\|<1$.
\begin{enumerate}
\item $\sqrt{I_n-\alpha A}$ has no zero entries for all but a finite number of $\alpha$ in the interval $ [-1,1]$. 
\item If $v$ is a unit vector with  $Av=0$, then $$\sqrt{I_{n}-(\alpha A+vv^T)}=\sqrt{I_{n}-\alpha A}-vv^T$$ has no zero entries for all but a finite number of $ \alpha \in [-1,1]$. 
\end{enumerate}
\end{lemma}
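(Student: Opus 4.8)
The plan is to reduce both parts to a single analyticity argument. For part (1), fix a symmetric generalised irreducible $A$ with $\|A\|<1$, and for $\alpha\in[-1,1]$ consider the matrix-valued function $\alpha\mapsto\sqrt{I_n-\alpha A}$. Since $\|\alpha A\|\le\|A\|<1$, the spectrum of $I_n-\alpha A$ stays in a disc around $1$ bounded away from $0$ (uniformly in $\alpha$), so the principal square root is given by a power series in $\alpha A$ that converges on this whole range; in particular each entry $f_{ij}(\alpha)=\bigl(\sqrt{I_n-\alpha A}\bigr)_{ij}$ is a real-analytic function of $\alpha$ on a neighbourhood of $[-1,1]$ (indeed it extends holomorphically to a complex neighbourhood). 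A real-analytic function on an interval is either identically zero or has only finitely many zeros there; since there are finitely many pairs $(i,j)$, it suffices to show that no $f_{ij}$ is identically zero. Expanding the square root as $\sqrt{I_n-\alpha A}=\sum_{k\ge0}c_k\alpha^k A^k$ with $c_0=1$, $c_1=-\tfrac12$, and more generally $c_k\ne0$ for all $k$ (the Taylor coefficients of $\sqrt{1-x}$ are all nonzero), the coefficient of $\alpha^k$ in $f_{ij}(\alpha)$ is $c_k(A^k)_{ij}$. By generalised irreducibility, for each $(i,j)$ there is some $k$ with $(A^k)_{ij}\ne0$, hence the corresponding Taylor coefficient of $f_{ij}$ is nonzero and $f_{ij}\not\equiv0$. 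This proves (1).

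For part (2), first verify the stated identity $\sqrt{I_n-(\alpha A+vv^T)}=\sqrt{I_n-\alpha A}-vv^T$. Since $Av=0$ and $A$ is symmetric, $v$ is an eigenvector of $A$ with eigenvalue $0$, so $A$ and $vv^T$ commute and are simultaneously diagonalisable; on the line $\mathbb{R}v$ the matrix $I_n-(\alpha A+vv^T)$ acts as $I-1=0$ while $\sqrt{I_n-\alpha A}-vv^T$ acts as $1-1=0$, and on $v^\perp$ both sides agree with $\sqrt{I_n-\alpha A}$ restricted there. Squaring $\sqrt{I_n-\alpha A}-vv^T$ and using $(\sqrt{I_n-\alpha A})v=v$ (again because $v$ is a null vector of $A$) confirms that its square is $I_n-\alpha A-vv^T$, and its eigenvalues are nonnegative, so it is the principal square root. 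Now the $(i,j)$ entry of $\sqrt{I_n-\alpha A}-vv^T$ is $f_{ij}(\alpha)-v_iv_j$, which is again real-analytic in $\alpha$; so as before it suffices to check it is not identically zero for each $(i,j)$. Write $f_{ij}(\alpha)=\sum_k c_k(A^k)_{ij}\alpha^k$; the constant term is $(I_n)_{ij}-v_iv_j=\delta_{ij}-v_iv_j$ and the coefficient of $\alpha^k$ for $k\ge1$ is $c_k(A^k)_{ij}$. If $i\ne j$ this is $-v_iv_j$ at $\alpha^0$ plus $\sum_{k\ge1}c_k(A^k)_{ij}\alpha^k$; if $v_iv_j\ne0$ the function is nonzero at $\alpha=0$, and otherwise some higher coefficient is nonzero by generalised irreducibility. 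If $i=j$, the constant term is $1-v_i^2\ge0$, which is nonzero unless $v_i^2=1$; but $\|v\|=1$ forces all other entries of $v$ to vanish, and then generalised irreducibility (applied to a pair $(i,\ell)$ with $\ell\ne i$) gives $(A^k)_{i\ell}\ne0$ for some $k$, which would make $f_{i\ell}$ nonconstant — but we only need the $(i,i)$ entry here, so instead note directly that $1-v_i^2=1$ whenever some other $v_\ell\ne0$, and if $v=\pm e_i$ then $Av=0$ says row $i$ of $A$ is zero, contradicting generalised irreducibility (there is no $k$ with $(A^k)_{i\ell}\ne0$ for $\ell\ne i$, as $A^k$ has zero $i$-th row for all $k\ge1$ while $(A^0)_{i\ell}=0$). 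Hence every entry of $\sqrt{I_n-\alpha A}-vv^T$ is a nonzero real-analytic function of $\alpha$ on $[-1,1]$, giving finitely many zeros in total.

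The main obstacle I anticipate is not the analyticity bookkeeping but getting the algebraic identity in (2) cleanly and making sure the degenerate case $v=\pm e_i$ is genuinely excluded by generalised irreducibility; once that structural point is nailed down, both parts follow from the same "real-analytic, not identically zero, hence finitely many zeros" template, with generalised irreducibility supplying exactly the nonvanishing Taylor coefficient needed in each entry. A minor point to state carefully is that the power series $\sqrt{1-x}=\sum c_kx^k$ has $c_k\ne0$ for every $k\ge0$ (e.g. $c_k=\binom{1/2}{k}$, which is nonzero), so that $c_k(A^k)_{ij}\ne0$ whenever $(A^k)_{ij}\ne0$.
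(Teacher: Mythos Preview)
Your proposal is correct and follows essentially the same approach as the paper: expand $\sqrt{I_n-\alpha A}$ as a power series in $\alpha$ with coefficients $c_k(A^k)_{ij}$, observe that each entry is real-analytic on a neighbourhood of $[-1,1]$, and use generalised irreducibility to produce a nonzero Taylor coefficient so the entry function is not identically zero. The paper streamlines part~(2) by noting at the outset that the $(i,j)$ entry in both parts has the common form $\gamma_{ij}-\sum_{k\ge1}c_k(A^k)_{ij}\alpha^k$ for some constant $\gamma_{ij}$, and then argues once that this is \emph{nonconstant} (since some $k\ge1$ gives $(A^k)_{ij}\ne0$); this sidesteps your case analysis on whether the constant term $\delta_{ij}-v_iv_j$ vanishes. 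Your explicit verification of the identity $\sqrt{I_n-(\alpha A+vv^T)}=\sqrt{I_n-\alpha A}-vv^T$ is a useful addition that the paper asserts without proof; your exclusion of $v=\pm e_i$ via generalised irreducibility is correct (and note the paper's definition requires a \emph{positive} integer $k$, so a zero row of $A$ genuinely violates it), though the exposition there could be tightened by dropping the false start about $f_{i\ell}$.
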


\begin{proof}
The $(i,j)$--th element of $\sqrt{I_n-\alpha A}$, and the $(i,j)$--th element of $\sqrt{I_n-\alpha A}-vv^T$ are both, for some %
$\gamma_{ij}$ independent of $\alpha$, of the following form:
\[ f_{ij}(\alpha):=\gamma_{ij} -\sum_{j=1}^{\infty}  c_k a_{ij}(k) \alpha^k,\]
where $c_k>0$ is the $k$th coefficient of the Taylor series at $x=0$ for the function $x\mapsto -\sqrt{1-x}$, and $a_{ij}(k)$ denotes the $(i,j)$-th element of $A^k$. It is easy to check that the series for $f_{ij}(\alpha)$ converges absolutely for $\alpha$ in the open interval $U=(-\|A\|^{-1},\|A\|^{-1})$ so the function $f_{ij}$ is real-analytic on~$U$. Since $A$ is generalised irreducible, there exists a $k >0$ so that $a_{ij}(k) \neq 0$. Hence $f_{ij}$ is not a constant, so $f_{ij}$ has finitely many zeroes on the compact set $[-1,1]\subseteq U$. The union over $1\leq i,j\leq n$ of these zero sets is also finite, and the statement follows.
\end{proof}

Note that in the second item we do not need $A$ to be generalised irreducible to reach the conclusion, if we assume instead that  $v$ has no zero entries.  We summarise our observations so far in the corollary below. 

\begin{corollary}\label{cor:sqrt}
  Let $B$ be an $m \times n$ matrix with $\|B\|<1$ for which $BB^T$
  and $B^TB$ are both generalised irreducible matrices.
  \begin{enumerate}
  \item For any $\alpha\in [-1,1]$, the matrix $\M{\alpha}$
    in~\eqref{eq:M} is well defined and has at most two distinct
    eigenvalues. Moreover, for all but finitely many
    $\alpha\in[-1,1]$, the diagonal blocks containing the square roots
    in $\M{\alpha}$ have no zero entries.
  \item If %
    $v$ is a unit vector in $\R^m$ with
    $B^Tv=0$, then the same holds for the matrices $\Mhat{\alpha}{v}$ in~\eqref{eq:N}.
  \end{enumerate}
\end{corollary}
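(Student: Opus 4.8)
The plan is to read off both parts of Corollary~\ref{cor:sqrt} from the Lemma, using the hypothesis $\|B\|<1$ only to control positivity. I would first dispose of the easy assertions. Since $\|B\|<1$, also $\|BB^T\|=\|B^TB\|=\|B\|^2<1$, so for every $\alpha\in[-1,1]$ the symmetric matrices $I_m-\alpha^2BB^T$ and $I_n-\alpha^2B^TB$ are positive definite; hence they have unique positive semidefinite square roots and $\M{\alpha}$ is well defined. For $\Mhat{\alpha}{v}$, the hypothesis $B^Tv=0$ gives $BB^Tv=0$, so $v$ lies in the kernel of the symmetric matrix $BB^T$; thus $\R v$ and $v^\perp$ are invariant for $\alpha^2BB^T+vv^T$, which restricts to the identity on $\R v$ and to $\alpha^2BB^T|_{v^\perp}$ on $v^\perp$, an operator with eigenvalues in $[0,\|B\|^2]$. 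Therefore $I_m-(\alpha^2BB^T+vv^T)$ is positive semidefinite and $\Mhat{\alpha}{v}$ is well defined. The bound of at most two distinct eigenvalues is then immediate from the identities $\M{\alpha}^2=I_{m+n}$ and $\Mhat{\alpha}{v}^2=I_{1+m+n}$ recorded after~\eqref{eq:M} and~\eqref{eq:N}, which force every eigenvalue to lie in $\{-1,1\}$.

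The substance of the corollary is the claim about zero entries. For item~(1), the two diagonal blocks of $\M{\alpha}$ containing square roots are $\sqrt{I_m-\beta BB^T}$ and $\sqrt{I_n-\beta B^TB}$ with $\beta=\alpha^2\in[0,1]$. Since $BB^T$ and $B^TB$ are generalised irreducible by hypothesis and have spectral norm $\|B\|^2<1$, item~(1) of the Lemma applies with $A=BB^T$ and with $A=B^TB$, and shows that each of these blocks has no zero entries for all $\beta\in[-1,1]$ outside some finite set $F$. As $\{\alpha\in[-1,1]:\alpha^2\in F\}$ is finite, the square-root blocks of $\M{\alpha}$ have no zero entries for all but finitely many $\alpha\in[-1,1]$, as required.

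For item~(2), the block $-\sqrt{I_n-\alpha^2B^TB}$ has a zero entry precisely when $\sqrt{I_n-\alpha^2B^TB}$ does, so it is handled exactly as in item~(1). For the block $\sqrt{I_m-(\alpha^2BB^T+vv^T)}$ I would instead invoke item~(2) of the Lemma with $A=BB^T$; its hypotheses are met because $BB^Tv=0$ and $\|BB^T\|<1$, so $\sqrt{I_m-(\beta BB^T+vv^T)}=\sqrt{I_m-\beta BB^T}-vv^T$ has no zero entries for $\beta$ outside a finite set, and substituting $\beta=\alpha^2$ finishes the argument as before.

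I do not anticipate a genuine obstacle: the corollary is essentially a repackaging of the Lemma. The only points needing a little care are the passage from finitely many exceptional values of $\beta$ to finitely many exceptional values of $\alpha$ via $\beta=\alpha^2$, and the verification that the matrices under the square roots are positive semidefinite throughout $[-1,1]$ --- this is precisely where $\|B\|<1$ (rather than merely $\|B\|\le1$) enters, keeping $\M{\alpha}$ and $\Mhat{\alpha}{v}$ well defined and placing $BB^T$, $B^TB$ within the scope of the Lemma.
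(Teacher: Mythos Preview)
Your proposal is correct and matches the paper's intent: the corollary is presented there without proof, merely as a summary of the preceding Lemma and the identities $\M{\alpha}^2=I$ and $\Mhat{\alpha}{v}^2=I$, and you have simply written out the implicit details (well-definedness via positive (semi)definiteness, the eigenvalue bound from the square identities, and the substitution $\beta=\alpha^2$ to pull the Lemma's finite exceptional set back to finitely many~$\alpha$).
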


A nonnegative matrix is generalised irreducible precisely when it is irreducible, and by definition, the irreducibility of any 
matrix depends only on its pattern. In the next lemma we connect the irreducibility of $BB^T$ with the pattern of $B$.

 \begin{lemma}\label{lem:reducible}
Let $B$ be an $m \times n$ nonnegative matrix. Then $BB^T$ is reducible if and only if there exists an $m \times m$ permutation matrix $P$ and an $n\times n$ permutation matrix $Q$ such that $PBQ$ is of the form \begin{equation*}PBQ=\npmatrix{B' & 0_{m_1,n_2} \\ 0_{m_2,n_1} & B''},
\end{equation*}
where $m_1 \neq 0$, $m_2 \neq 0$ and $n_1,n_2\ge0$ with $n_1+n_2=n$ and $m_1+m_2=m$.
\end{lemma}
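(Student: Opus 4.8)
The plan is to translate the reducibility of the symmetric matrix $BB^T$ into a combinatorial statement about the supports of the rows of~$B$, using crucially that $B$ is nonnegative.

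First I would record the key observation. For a row index $i$, write $S_i=\{k\in\{1,\dots,n\}\colon b_{ik}\ne0\}$ for the support of the $i$-th row of~$B$. Since $B$ is nonnegative, for any $i,j$ we have $(BB^T)_{ij}=\sum_{k=1}^n b_{ik}b_{jk}\ge0$, which is a sum of nonnegative terms, so it vanishes precisely when $b_{ik}b_{jk}=0$ for every~$k$; that is, $(BB^T)_{ij}=0$ if and only if $S_i\cap S_j=\emptyset$ (no cancellation can occur). Consequently $BB^T$ is reducible if and only if $\{1,\dots,m\}$ can be partitioned into two nonempty sets $R_1,R_2$ with $S_i\cap S_j=\emptyset$ for all $i\in R_1$ and $j\in R_2$.

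For the forward implication I would argue as follows. Suppose $BB^T$ is reducible. As $BB^T$ is symmetric, there is a permutation matrix $P_0$ with $P_0(BB^T)P_0^T=D_1\oplus D_2$, where $D_1,D_2$ have positive size; the preimages under the corresponding reordering of the two blocks' index sets give a partition $\{1,\dots,m\}=R_1\sqcup R_2$ with $R_1,R_2\neq\emptyset$ and $S_i\cap S_j=\emptyset$ for $i\in R_1$, $j\in R_2$. Put $N_1=\bigcup_{i\in R_1}S_i$ and $N_2=\{1,\dots,n\}\setminus N_1$. Then every row indexed by $R_1$ is supported in $N_1$, and since each $S_j$ with $j\in R_2$ is disjoint from every $S_i$ with $i\in R_1$, hence from $N_1$, every row indexed by $R_2$ is supported in~$N_2$. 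Taking $P$ to be the $m\times m$ permutation matrix listing the indices of $R_1$ before those of $R_2$, and $Q$ the $n\times n$ permutation matrix listing the indices of $N_1$ before those of $N_2$, the matrix $PBQ$ has the block form $\npmatrix{B'&0_{m_1,n_2}\\0_{m_2,n_1}&B''}$ with $m_1=|R_1|$, $m_2=|R_2|$, $n_1=|N_1|$, $n_2=|N_2|$. The converse is immediate: if $PBQ=\npmatrix{B'&0_{m_1,n_2}\\0_{m_2,n_1}&B''}$ then $P(BB^T)P^T=(PBQ)(PBQ)^T=(B'B'^T)\oplus(B''B''^T)$ is a direct sum of two blocks of positive sizes $m_1$ and $m_2$, so $BB^T$ is reducible.

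I do not expect a serious obstacle here; the only point needing a little care is the degenerate situation where $N_1$ or $N_2$ is empty, which happens exactly when $R_1$ or $R_2$ consists entirely of zero rows of~$B$, and which is harmless because the statement allows $n_1$ or $n_2$ to be $0$ and the corresponding block $B'$ or $B''$ to be an empty matrix.
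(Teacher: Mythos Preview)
Your proof is correct and follows essentially the same approach as the paper. Both arguments partition the rows of~$B$ according to the block decomposition of $BB^T$ and then use nonnegativity to show the column indices split accordingly; you phrase this via the support sets $S_i$ (making the key fact $(BB^T)_{ij}=0\iff S_i\cap S_j=\emptyset$ explicit), while the paper works directly with the row blocks $B_1,B_2$ and identifies $n_1$ as the number of nonzero columns of~$B_1$, which is exactly your $|N_1|$. Your version also spells out the converse, which the paper leaves implicit.
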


\begin{proof}
First we note that, by the definition of reducibility, the symmetric matrix $BB^T$ is reducible if and only if 
\begin{equation}\label{eq:reducible}
PBB^TP^T=\npmatrix{C_1 & 0_{m_1,m_2} \\ 0_{m_2,m_1} & C_2}
\end{equation}
 for some permutation matrix $P$ and some $m_1,m_2\ne 0$.  Let us write 
  $$PB=\npmatrix{B_1 \\ B_2},$$
  where $B_1$ is an $m_1 \times n$ matrix, and $B_2$ is an $m_2 \times n$ matrix. From (\ref{eq:reducible}) we deduce that $B_1B_2^T=0$. %
  Let $n_2\in \{0,1,\dots,n\}$ be the number of zero columns of $B_1$, let $n_1=n-n_2$ and let $Q$ be a permutation matrix such that 
   $$B_1Q=\npmatrix{B' & 0_{m_1,n_2}},$$
   where the $m_1\times n_1$ matrix $B'$ has no zero columns. From nonnegativity and $B_1Q(B_2Q)^T=0$, we conclude that 
 $$B_2Q=\npmatrix{0_{m_2, n_1} & B''},$$
thus finishing the proof. 
\end{proof}

In~\cite[Theorem~35]{MR3665573}
 it was shown that $q(G^c)\le 2\chi(G)$  for a graph~$G$. In particular, $q(G^c)\le 4$ for any bipartite graph~$G$. As we will now see, many bipartite graphs have $q(G^c)=2$.

 \begin{theorem}\label{thm:bipartite}
 Let $G$ be a $2$-colourable graph which admits a $2$-colouring with $m$ black vertices and $n$ white vertices, where $|G|=m+n \geq 3$. 
 If $G$  does not contain \[K_{m_1, n_1} \cup K_{m_2, n_2}\] 
 as a spanning subgraph, where $m=m_1+m_2$, $n=n_1+n_2$, $m_1,m_2,n_1,n_2\ge0$ and either $m_1m_2\ne0$ or $n_1n_2\ne0$, %
 then $q(G^c)=2$. %
 \end{theorem}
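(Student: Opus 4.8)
The plan is to produce an orthogonal matrix in $S(G^c)$, i.e. a symmetric $A\in S(G^c)$ with $A^2=I$: such a matrix has all its eigenvalues in $\{1,-1\}$, so if it is not $\pm I$ it has exactly two distinct eigenvalues and $q(G^c)\le2$. The reverse inequality $q(G^c)\ge2$ is immediate: $G^c$ has an edge (inside the black class if $m\ge2$, and otherwise inside the white class, as then $n\ge2$), so no matrix in $S(G^c)$ is scalar.

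Assume the hypothesis. I would first note that it forces $m,n\ge1$ and $G\ne K_{m,n}$. Indeed, if $m=0$ or $n=0$ then $G$ is edgeless, and since $|G|\ge3$ the edgeless graph $K_{1,0}\cup K_{|G|-1,0}$ is then a forbidden spanning subgraph of $G$; and if $G=K_{m,n}$ with, say, $m\ge2$ (which we may assume after possibly swapping the colours, as $\max(m,n)\ge2$), then $K_{1,n}\cup K_{m-1,0}$ is a forbidden spanning subgraph of $G$. Now order the vertices with the $m$ black ones first and let $B_0\in\{0,1\}^{m\times n}$ encode non-adjacency: $(B_0)_{ij}=1$ exactly when black vertex $i$ is not adjacent to white vertex $j$ in $G$. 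Since $G\ne K_{m,n}$ we have $B_0\ne0$, so $B_0B_0^T$ and $B_0^TB_0$ are nonzero nonnegative matrices.

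The crux is the claim that the hypothesis is equivalent to $B_0B_0^T$ and $B_0^TB_0$ both being irreducible or, equivalently (these being nonnegative), generalised irreducible. Applying Lemma~\ref{lem:reducible} to $B_0$, the matrix $B_0B_0^T$ is reducible precisely when the black vertices can be partitioned into nonempty sets $M_1,M_2$ and the white vertices into (possibly empty) sets $W_1,W_2$ so that in $G$ every vertex of $M_1$ is adjacent to every vertex of $W_2$ and every vertex of $M_2$ is adjacent to every vertex of $W_1$ --- equivalently, so that $G$ contains $K_{|M_1|,|W_2|}\cup K_{|M_2|,|W_1|}$ as a spanning subgraph in which the product of the two black-class sizes is nonzero. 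Applying the same statement to $B_0^T$ (which encodes the same bipartite non-adjacency relation with the roles of the colours interchanged), $B_0^TB_0$ is reducible precisely when $G$ contains such a spanning subgraph in which the product of the two white-class sizes is nonzero. The disjunction of these two conditions is exactly the negation of the hypothesis, which proves the claim.

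Finally, pick $t>0$ with $\|tB_0\|<1$ and put $B=tB_0$; then $BB^T$ and $B^TB$ are positive scalar multiples of $B_0B_0^T$ and $B_0^TB_0$ and so are generalised irreducible. By Corollary~\ref{cor:sqrt}(1) the matrix $\M{\alpha}$ from~\eqref{eq:M} is a well-defined orthogonal matrix for every $\alpha\in[-1,1]$, and for all but finitely many $\alpha$ its two diagonal square-root blocks have no zero entries; fix such an $\alpha\ne0$. Then the off-diagonal entries of $\M{\alpha}$ are: for $i\ne j$ both black, $(\sqrt{I_m-\alpha^2B_0B_0^T})_{ij}\ne0$, matching the edges of $G^c$ inside the black class; for $i\ne j$ both white, likewise nonzero, matching $G^c$; and for $i$ black, $j$ white, the entry $t\alpha(B_0)_{ij}$, which is nonzero exactly for non-edges of $G$, i.e. edges of $G^c$. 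Hence $\M{\alpha}\in S(G^c)$, and $\M{\alpha}\ne\pm I$ since $\alpha B_0\ne0$, so $q(G^c)=2$. I expect the main obstacle to be the bookkeeping in the third paragraph: one must carefully track which block of the decomposition supplied by Lemma~\ref{lem:reducible} is permitted to be empty, and verify that "$B_0B_0^T$ reducible or $B_0^TB_0$ reducible" describes exactly the family of forbidden spanning subgraphs appearing in the hypothesis --- neither more nor fewer graphs.
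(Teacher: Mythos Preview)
Your proof is correct and follows essentially the same approach as the paper: build an orthogonal matrix $\M{\alpha}\in S(G^c)$ via Corollary~\ref{cor:sqrt}, using Lemma~\ref{lem:reducible} to translate the hypothesis on forbidden spanning subgraphs into irreducibility of $B_0B_0^T$ and $B_0^TB_0$. Your version is in fact slightly more careful than the paper's in two places---you pick a generic $\alpha\ne0$ (the paper writes $\alpha=1$, which Corollary~\ref{cor:sqrt} does not literally guarantee is admissible) and you explicitly verify $\M{\alpha}\ne\pm I$---but these are cosmetic differences, not a different method.
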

 
\begin{proof}
Since $|G|\geq 3$ and $G$ is bipartite, $G^c$ contains at least one edge and so $q(G^c)\ge2$.
Let $1,2,\ldots,m$ be the black vertices, and $m+1, \ldots,m+n$ be the white vertices in the $2$-colouring of $G$. %
Let~$B=[b_{i,j}]$ be any nonnegative $m\times n$ matrix with $\|B\|<1$, satisfying
\begin{equation*}%
b_{i,j}= 0\iff \{i,m+j\}\text{ is an edge of~$G$.}
\end{equation*} We claim that $BB^T$ and $B^TB$ are irreducible. Indeed, by Lemma~\ref{lem:reducible}, if $BB^T$ is reducible then we have permutation matrices $P,Q$ so that $PBQ$ is of the form
 \begin{equation*}
   PBQ=\npmatrix{B' & 0_{m_1,n_2} \\ 0_{m_2,n_1} & B''}
\end{equation*}
where $m_1 \neq 0$, $m_2 \neq 0$ and $n_1,n_2\ge0$. This implies that if $C=[c_{i,j}]$ is any $m\times n$ matrix so that $b_{i,j}=0\iff c_{i,j}\ne 0$, then
\[ \npmatrix{0&C\\C^T&0}\in S(G)
\qand PCQ=\npmatrix{C'&C_{12}\\C_{21}&C''}\]
where $C_{12}$ and $C_{21}$ have no zero entries. 
Then
\[\npmatrix{P&0\\0&Q^T}\npmatrix{0&C\\C^T&0}\npmatrix{P^T&0\\0&Q}= \npmatrix{0&PCQ\\(PCQ)^T&0}\]
is in $S(\tilde G)$ where $\tilde G$ is isomorphic to $G$ with the black and white vertices permuted by $P$ and $Q^T$, respectively. Hence $G$ contains $K_{m_1,n_1}\cup K_{m_2,n_2}$, contrary to hypothesis, so $BB^T$ is irreducible. Similary, $B^TB$ is irreducible.

By Corollary~\ref{cor:sqrt}, the matrix
 \begin{equation*}%
 M=\M{1}=\npmatrix{\sqrt{I_m-BB^T} & B \\ B^T & -\sqrt{I_n-B^TB}}
 \end{equation*}
 has $q(M)\leq 2$ and the diagonal blocks of $M$ have all their entries different from zero, so $M\in S(G^c)$. Hence, $q(G^c)\leq q(M)=2$. 
 \end{proof}

We will also make use of the following construction from~\cite{MR2098598}.

\begin{lemma}%
\label{HS04}
Let $B$ be a symmetric $m \times m$ matrix and let $u$ be an eigenvector of~$B$ with eigenvalue $\mu$, normalised so that $u^Tu=1$. If 
 \begin{equation}\label{c1f1}
  A=\npmatrix{A_1 & b \\
               b^T & \mu}
 \end{equation}
 is any symmetric $n \times n$ matrix with a diagonal element $\mu$, then the $(m+n-1)\times (m+n-1)$ matrix
  \[C=\npmatrix{A_1 & bu^T \\
               ub^T & B}\]
               has  $\sigma(C)=\sigma(A)\cup (\sigma(B)\setminus\{\mu\})$. 
\end{lemma}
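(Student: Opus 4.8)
The plan is to produce an orthogonal matrix that conjugates $C$ to the block-diagonal matrix $A\oplus B'$ for a suitable symmetric $(m-1)\times(m-1)$ matrix $B'$ with $\sigma(B')=\sigma(B)\setminus\{\mu\}$; the conclusion then follows at once, since similar matrices have equal spectra (as multisets).

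First I would use $u^Tu=1$ to extend $u$ to an orthogonal $m\times m$ matrix $U=\npmatrix{u&U'}$ whose first column is $u$ (for instance by Gram--Schmidt). Conjugating $C$ by the orthogonal matrix $I_{n-1}\oplus U$ gives
\[
\npmatrix{I_{n-1}&0\\0&U^T}\npmatrix{A_1&bu^T\\ub^T&B}\npmatrix{I_{n-1}&0\\0&U}=\npmatrix{A_1&bu^TU\\U^Tub^T&U^TBU}.
\]
Because the columns of $U$ are orthonormal and $u$ is the first of them, $u^TU=e_1^T$, where $e_1$ is the first standard basis vector of $\R^m$; hence $bu^TU=\npmatrix{b&0_{n-1,m-1}}$ and $U^Tub^T=\npmatrix{b^T\\0_{m-1,n-1}}$. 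Moreover, $Bu=\mu u$ forces the first column of the symmetric matrix $U^TBU$ to be $\mu e_1$, so $U^TBU=\npmatrix{\mu&0\\0&B'}$ for some symmetric $(m-1)\times(m-1)$ matrix $B'$. Comparing $\sigma(B)=\sigma(U^TBU)$ with the spectrum of this last block matrix shows that $\sigma(B')=\sigma(B)\setminus\{\mu\}$, with exactly one copy of $\mu$ removed.

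Assembling these observations, the conjugated matrix equals
\[
\npmatrix{A_1&b&0\\b^T&\mu&0\\0&0&B'},
\]
and a permutation similarity moving the $\mu$-row and $\mu$-column next to the $A_1$ block rearranges this as $A\oplus B'$. Therefore $\sigma(C)=\sigma(A)\cup\sigma(B')=\sigma(A)\cup(\sigma(B)\setminus\{\mu\})$, as claimed.

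I expect no real obstacle here: the computation is routine, and the only point requiring a little care is the multiplicity bookkeeping --- that passing from $B$ to the smaller block $B'$ deletes precisely one copy of $\mu$ --- which is automatic from the orthogonal decomposition $U^TBU=\npmatrix{\mu&0\\0&B'}$. As the statement records, this lemma is taken from~\cite{MR2098598}, so in practice one could simply cite that computation.
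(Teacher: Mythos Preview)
Your proof is correct. The paper itself does not prove this lemma; it simply imports the construction from~\cite{MR2098598} without argument, so there is no ``paper proof'' to compare against. Your orthogonal similarity via $I_{n-1}\oplus U$ is exactly the standard verification and is complete as written. One cosmetic remark: once you have displayed the conjugated matrix
\[
\npmatrix{A_1&b&0\\b^T&\mu&0\\0&0&B'},
\]
this is already $A\oplus B'$ with the block sizes $(n-1),1,(m-1)$; no further permutation similarity is needed.
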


In particular, Lemma~\ref{HS04} gives us a tool to manage duplicated vertices in a graph. Let $G$ be a graph containing a vertex $v$.  Recall that $N(v)$ %
denotes the open neighbourhood of $v$, i.e., the set of all vertices in $G$ adjacent to $v$. 
We can form a new graph
$\dup(G,v)$, by \emph{duplicating}~$v$; that is, by augmenting~$G$ with a new
vertex $w$ and extra edges joining $w$ to every vertex in $N(v)$, so that in $\dup(G,v)$ we have $N(v)=N(w)$ and $v$ and $w$ are not neighbours. 

Alternatively, we can
form the ``joined duplicated vertex'' graph $\jdup(G,v)$ which is equal to $\dup(G,v)$
with an extra edge joining $v$ and its duplicate vertex, $w$.

 \begin{center}
 \begin{tabular}{ccc}
	\begin{tikzpicture}[style=thick, scale=0.7]
		\draw \foreach \x in {-1,0,1,2,3} {
				(\x,0) -- (\x + 1,0) };
		\draw (3,0)--(3.5,1)--(4,0);
		\draw[fill=white] \foreach \x in {-1,0,2,3,4} {
		                    (\x,0) circle (1mm)
		};		
		\draw[fill=white] (3.5,1) circle (1mm);
		\draw[fill=gray] (1,0) circle (1mm);
		\node [above] at (1,0) {$v$};
		\phantom{\draw[fill=white] (3.5,-1) circle (1mm);}
		\end{tikzpicture}
		&
			\begin{tikzpicture}[style=thick, scale=0.7]
		\draw \foreach \x in {-1,2,3} {
				(\x,0) -- (\x + 1,0) };
		\draw (3,0)--(3.5,1)--(4,0);
		\draw (2,0)--(1,1)--(0,0)--(1,-1)--(2,0);
		\draw[fill=white] \foreach \x in {-1,0,2,3,4} {
		                    (\x,0) circle (1mm)
		};		
		\draw[fill=white] (3.5,1) circle (1mm);
		\draw[fill=gray] (1,1) circle (1mm) (1,-1) circle (1mm);
		\end{tikzpicture}
		&
			\begin{tikzpicture}[style=thick, scale=0.7]
		\draw \foreach \x in {-1,2,3} {
				(\x,0) -- (\x + 1,0) };
		\draw (3,0)--(3.5,1)--(4,0);
		\draw (2,0)--(1,1)--(0,0)--(1,-1)--(2,0);
		\draw (1,1)--(1,-1);
		\draw[fill=white] \foreach \x in {-1,0,2,3,4} {
		                    (\x,0) circle (1mm)
		};		
		\draw[fill=white] (3.5,1) circle (1mm);
		\draw[fill=gray] (1,1) circle (1mm) (1,-1) circle (1mm);
		\end{tikzpicture}
\\
$G$ & $\dup(G,v)$ & $\jdup(G,v)$  \\
	\end{tabular}
\end{center}
Observe that 
\begin{equation}\label{jdup}
 \dup(G,v)^c = \jdup(G^c,v).
 \end{equation}
First we bound $q(\jdup(G,v))$. 

\begin{lemma}\label{prop:dup}
If $G$ is a non-empty graph, then $$q(\jdup(G,v))\leq q(G).$$ 
\end{lemma}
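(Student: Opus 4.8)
The plan is to take a matrix $A\in S(G)$ realising $q(A)=q(G)$ and build from it a matrix $C\in S(\jdup(G,v))$ with no more distinct eigenvalues, by using Lemma~\ref{HS04} with a suitably chosen $2\times 2$ symmetric block $B$. Write the vertex $v$ last, so that
\[
A=\npmatrix{A_1 & b\\ b^T & d},
\]
where $d$ is the diagonal entry of $A$ at $v$ and $b$ encodes the edges from $v$ into the rest of $G$ (so $b$ has a nonzero entry exactly at the neighbours of $v$). The new graph $\jdup(G,v)$ is obtained by adding a vertex $w$ with $N(w)=N(v)\cup\{v\}$; I want to produce a matrix whose off-diagonal pattern matches this, i.e.\ of the form
\[
C=\npmatrix{A_1 & bu_1 & bu_2\\ u_1 b^T & * & *\\ u_2 b^T & * & *}
\]
with the $2\times 2$ lower-right block having \emph{both} off-diagonal entries nonzero (the $v$--$w$ edge) and with $u_1,u_2$ both nonzero so that every neighbour of $v$ becomes a neighbour of both $v$ and $w$ (and no new edges appear elsewhere). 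This is precisely the output of Lemma~\ref{HS04} applied with $n$ the order of $G$, $m=2$, and $B$ a $2\times2$ symmetric matrix having $d$ as an eigenvalue with a corresponding \emph{unit} eigenvector $u=(u_1,u_2)^T$ that has no zero entries.

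The key step is therefore to choose $B$. I want: (i) $B$ symmetric $2\times2$; (ii) $d\in\sigma(B)$; (iii) the $d$-eigenvector $u$ has both coordinates nonzero; (iv) the other eigenvalue $\lambda$ of $B$ already lies in $\sigma(A)$ — this guarantees $\sigma(C)=\sigma(A)\cup(\sigma(B)\setminus\{d\})\subseteq\sigma(A)$, so $q(C)\le q(A)=q(G)$; and (v) $B$ itself has a nonzero off-diagonal entry so the $v$--$w$ edge is present. Since $G$ is non-empty, $q(A)=q(G)\ge 2$, so $A$ has at least two distinct eigenvalues; pick $\lambda\in\sigma(A)$ with $\lambda\ne d$. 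Now simply set
\[
B=\npmatrix{d & 0\\ 0 & d} + \text{(a rank-one correction)},
\]
more concretely take $B$ with eigenvalues $d$ and $\lambda$ and with the $d$-eigenvector $u=\tfrac{1}{\sqrt2}(1,1)^T$: explicitly $B=\tfrac{d+\lambda}{2}I_2+\tfrac{d-\lambda}{2}\npmatrix{0&1\\1&0}$, which has off-diagonal entry $\tfrac{d-\lambda}{2}\ne0$. This $B$ satisfies (i)–(v). Feeding $B$, $\mu=d$, $u=\tfrac1{\sqrt2}(1,1)^T$, and the decomposition $A=\npmatrix{A_1&b\\b^T&d}$ into Lemma~\ref{HS04} produces
\[
C=\npmatrix{A_1 & bu^T\\ ub^T & B},
\]
which lies in $S(\jdup(G,v))$ — the off-diagonal block $bu^T$ has a nonzero entry exactly where $b$ does, i.e.\ at $N(v)$, for each of the two new coordinates, and $B$ contributes the $v$--$w$ edge — and has $\sigma(C)=\sigma(A)\cup(\sigma(B)\setminus\{d\})=\sigma(A)\cup\{\lambda\}=\sigma(A)$. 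Hence $q(\jdup(G,v))\le q(C)=q(A)=q(G)$.

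I do not expect a serious obstacle here; the only points requiring a little care are the bookkeeping that $C$ really has the pattern of $\jdup(G,v)$ (both that no spurious edges are created — handled since $u$ has only two entries and $A_1$ is unchanged — and that the needed edges are present, which is why both $u_1,u_2\ne0$ and the $B$-off-diagonal $\ne0$ matter), and the observation that $d\ne\lambda$ is available, which is exactly the hypothesis that $G$ is non-empty (forcing $q(G)\ge2$). If $G$ were edgeless, $A$ could be a scalar matrix and no valid $\lambda$ would exist, so the non-emptiness hypothesis is genuinely used.
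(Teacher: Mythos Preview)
Your proof is correct and is exactly the ``easy consequence of Lemma~\ref{HS04}'' that the paper alludes to in its one-line proof (the paper simply cites an external reference and notes that Lemma~\ref{HS04} gives it directly). You have spelled out the details carefully: the choice of a $2\times 2$ block $B$ with eigenvalues $d$ and some $\lambda\in\sigma(A)\setminus\{d\}$, the nowhere-zero eigenvector $u$, and the verification that the resulting $C$ has the pattern of $\jdup(G,v)$ and no new eigenvalues.
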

 \begin{proof}
  The statement is contained
  in~\cite[Theorem~3]{2017arxiv170802438} (alternatively it is an easy
  consequence of Lemma~\ref{HS04}). 
   \end{proof}

It is clear from the lemma above that, if $G$ is not a complete graph, then $q(\dup(G,v)^c)\leq q(G^c)$ by relation~\eqref{jdup}.

Bounding $q(\dup(G,v))$ with $q(G)$ is not as straightforward, but it can be done, if a matrix realising $q(G)$ has one of its eigenvalues equal to one of its diagonal elements. More precisely, the following lemma is contained in~\cite[Theorem~3]{2017arxiv170802438}; we include a brief proof.

\begin{lemma}\label{lem:dup}
  If $v$ is a %
  vertex of $G$ and $A\in S(G)$ with $\lambda=a_{v,v}\in \sigma(A)$,
  then there is a matrix $C\in S(\dup(G,v))$ with
  $\lambda=c_{v,v}$ %
  so that the spectra of $A$ and $C$ are equal as sets,
  with the multiplicity of $\lambda$ in $\sigma(C)$ increased by $1$.
\end{lemma}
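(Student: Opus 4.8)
The plan is to apply Lemma~\ref{HS04} with the roles of the two blocks arranged so that duplicating~$v$ corresponds to splitting off a $2\times2$ diagonal block whose off-diagonal structure forces the new vertex~$w$ to have exactly the neighbourhood $N(v)$ and no edge to~$v$. Concretely, write $A\in S(G)$ in the form
\[
A=\npmatrix{A_1 & b\\ b^T & \lambda},
\]
where the last coordinate is the vertex~$v$, $\lambda=a_{v,v}$, and $b$ is the column recording the edges at~$v$. Let $B=\npmatrix{\lambda & 0\\ 0 & \lambda}=\lambda I_2$; then $u=\tfrac{1}{\sqrt2}\npmatrix{1\\1}$ is a unit eigenvector of~$B$ for the eigenvalue~$\mu=\lambda$. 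Forming the matrix~$C$ of Lemma~\ref{HS04},
\[
C=\npmatrix{A_1 & bu^T\\ ub^T & B}
 =\npmatrix{A_1 & \tfrac{1}{\sqrt2}b & \tfrac{1}{\sqrt2}b\\[2pt]
 \tfrac{1}{\sqrt2}b^T & \lambda & 0\\[2pt]
 \tfrac{1}{\sqrt2}b^T & 0 & \lambda},
\]
we get a symmetric matrix whose last two coordinates play the role of~$v$ and its duplicate~$w$. Each of these two coordinates has the same off-diagonal pattern as~$b$ (since multiplying~$b$ by the nonzero scalar $1/\sqrt2$ preserves which entries vanish), the $(v,w)$ entry is~$0$, and both diagonal entries equal~$\lambda$. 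Hence $C\in S(\dup(G,v))$ with $c_{v,v}=\lambda$, as required.

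The spectral statement is then exactly the conclusion of Lemma~\ref{HS04}: with $n=|A_1|+1$ and $m=2$, we have $\sigma(C)=\sigma(A)\cup(\sigma(B)\setminus\{\mu\})$ as multisets. Here $\sigma(B)=\{\lambda,\lambda\}$ and $\mu=\lambda$, so $\sigma(B)\setminus\{\mu\}=\{\lambda\}$ (one copy of~$\lambda$ removed), and therefore $\sigma(C)=\sigma(A)\cup\{\lambda\}$. This says precisely that $A$ and $C$ have the same spectrum as sets, with the multiplicity of~$\lambda$ in $\sigma(C)$ one greater than in $\sigma(A)$.

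The only point needing a word of care — and the main (mild) obstacle — is checking that $C$ really lies in $S(\dup(G,v))$ rather than some graph with extra or missing edges: one must confirm that the block $A_1$ still encodes the edges of $G\setminus\{v\}$ unchanged (it does, since it is untouched), that the two copies of $\tfrac{1}{\sqrt2}b$ have exactly the zero pattern of~$b$, and that the new $(v,w)$ entry is genuinely zero so that $v$ and $w$ are non-adjacent in $\dup(G,v)$. All of these are immediate from the displayed form of~$C$. I would therefore simply exhibit $C$, note these pattern checks in one sentence, and invoke Lemma~\ref{HS04} for the spectrum.
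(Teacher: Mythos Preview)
Your proof is correct and is essentially identical to the paper's: the paper's one-line proof is ``Apply Lemma~\ref{HS04} with $B=\lambda I_2$ and $u$ any normalised vector without zero entries,'' which is exactly your construction with the specific choice $u=\tfrac{1}{\sqrt2}(1,1)^T$.
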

\begin{proof}
  Apply Lemma~\ref{HS04} with $B=\lambda I_2$ and $u$ any normalised vector without zero entries.
\end{proof}

\section{Complements of trees}\label{sec:trees}

In this section we will compute $q(G)$ for complements of trees. After we apply Theorem~\ref{thm:bipartite}, we are left with three exceptional families to consider. 

\begin{proposition}\label{qc=2}
Let $T$ be a tree which admits a $2$-colouring with $m$ black vertices and $n$ white vertices, where $|T|=m+n \geq 3$. 
 If $T$ contains $$K_{m_1, n_1} \cup K_{m_2, n_2},$$ 
where $m=m_1+m_2$, $n=n_1+n_2$, $m_1,m_2,n_1,n_2\ge0$ and either $m_1m_2\ne0$ or $n_1n_2\ne0$, as a spanning subgraph , then $T$ has one of the following forms: 
 \begin{enumerate}
 \item[(type 0):]  $T=K_{1,|T|-1}$
 \item[(type 1):] $T$ is equal to $K_{1,s} \cup K_{1,t}$ with one additional edge
   added in such a way to obtain a tree; or
 \item[(type 2):]  $T\ne K_{1,|T|-1}$, and $T$ is equal to $K_{1,s}$
   with~$t$ vertices appended as leaves,
\end{enumerate}
for some $s,t\ge1$.
 \end{proposition}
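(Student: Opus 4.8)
The plan is to read off the structure of $T$ from the presence of the spanning forest $F:=K_{m_1,n_1}\cup K_{m_2,n_2}$, using only that $T$ is a tree. First I fix the given $2$-colouring and write $X$ for the $m$ black vertices and $Y$ for the $n$ white vertices; since $T$ is connected with $|T|\ge 3$, both $m\ge 1$ and $n\ge 1$. The opening observation is that each $K_{m_i,n_i}$, being a subgraph of the tree $T$, contains no cycle, so $\min(m_i,n_i)\le 1$. Hence for $i=1,2$, either $m_in_i=0$ and $K_{m_i,n_i}$ is \emph{edgeless}, or $\{m_i,n_i\}=\{1,k_i\}$ for some $k_i\ge 1$ and $K_{m_i,n_i}$ is a star $K_{1,k_i}$. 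Next I rule out the case that both summands are edgeless: then $m_1n_1=m_2n_2=0$, and the hypothesis "$m_1m_2\ne 0$ or $n_1n_2\ne 0$" forces, respectively, $n_1=n_2=0$ (so $n=0$) or $m_1=m_2=0$ (so $m=0$), contradicting $m,n\ge 1$. Since types 0--2 and the hypothesis are symmetric under interchanging the two summands and under interchanging the two colours, I may relabel so that $K_{m_1,n_1}=K_{1,s}$ is a star with $s\ge 1$.

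I then split into two cases according to $K_{m_2,n_2}$. \emph{Case A: $K_{m_2,n_2}$ is also a star $K_{1,t}$.} Here $F$ is a spanning forest of $T$ with exactly two components and $s+t$ edges, while $|E(T)|=|T|-1=(s+t+2)-1=s+t+1$; so $T$ is $F$ together with a single extra edge, and this edge must join the two components of $F$ (otherwise it would close a cycle). Thus $T=K_{1,s}\cup K_{1,t}$ with one additional edge making it a tree, i.e.\ $T$ is of type 1. \emph{Case B: $K_{m_2,n_2}$ is edgeless.} Then exactly one of $m_2,n_2$ vanishes: if both were $0$ the hypothesis would read "$m_1\cdot 0\ne 0$ or $n_1\cdot 0\ne 0$", which is false. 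By colour symmetry assume $m_2=0$; the hypothesis then forces $n_1\ge 1$ and $n_2\ge 1$. Since $m=m_1$ and $\{m_1,n_1\}=\{1,s\}$, there are two subcases. If $m=1$, then $X$ is a single vertex, every edge of $T$ meets it, and $T=K_{1,|T|-1}$ is of type 0. Otherwise $m=s\ge 2$ and $n_1=1$, so $K_{m_1,n_1}=K_{s,1}$ is a star with one white centre $w_0$ adjacent to $s$ black leaves; comparing edge counts as before, $T$ is $F$ together with exactly $n_2$ further edges, and because the $n_2$ extra white vertices can only be adjacent to black vertices, each is attached as a leaf to one of the $s$ black leaves of the star. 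Hence $T$ is $K_{1,s}$ (centred at $w_0$) with $t:=n_2\ge 1$ vertices appended as leaves; as $s\ge 2$ and $t\ge 1$ this $T$ is not a star, so it is of type 2. The subcase $n_2=0$ is the same with the colours swapped.

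The only finicky part of the argument is the bookkeeping of the four colour-class sizes $m_1,m_2,n_1,n_2$ together with the repeated appeals to the hypothesis "$m_1m_2\ne 0$ or $n_1n_2\ne 0$" to discard the degenerate splittings (both summands edgeless, or one summand empty). Once those are eliminated, the conclusion is essentially forced: in every surviving configuration $T$ has just one more edge than the spanning forest $F$ (Case A and the type-2 subcase of Case B), or $T$ has only a single vertex in one colour class (the type-0 subcase), and in each situation the shape of $T$ is pinned down completely, yielding exactly the three listed forms.
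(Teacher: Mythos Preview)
Your proof is correct and follows essentially the same route as the paper's: both rest on the observation that each $K_{m_i,n_i}$, being a subgraph of a tree, must satisfy $\min(m_i,n_i)\le 1$, followed by a case split on whether both summands are genuine stars or one degenerates to an edgeless set of vertices, with your version simply filling in the edge-counting details that the paper's terse proof leaves implicit. One tiny slip in your closing summary paragraph: in the type-2 subcase of Case~B the tree $T$ exceeds $F$ by $n_2$ edges rather than one, though your actual argument earlier handles this correctly.
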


\begin{proof}
Note that if $T\ne K_{1,|T|-1}$, then $m,n\ge2$. 
Furthermore, the subgraphs $K_{m_i,n_i}$ need to be acyclic, so $k_1:=\min\{m_1,n_1\}\leq 1$ and $k_2:=\min\{m_2,n_2\}\leq1$. By symmetry, we may assume that $k_1\geq k_2$.   One of our partitions is proper, so $(k_1,k_2)\ne (0,0)$. If $k_1=k_2=1$, then $T$ is of type~1. On the other hand, if $k_1=1$ and $k_2=0$, then $K_{1,n_1}\cup K_{m_2,0}$ is an induced subgraph of $T$, and unless $T= K_{1,|T|-1}$, $T$ is of type~2.
\end{proof}

For type $0$ trees $T$, we have $T^c=K_{|T|-1}\cup \{v\}$, so $q(T^c)=2$. We compute $q(T^c)$ for trees of types 1 and 2 in subsections below. Theorem~\ref{thm:bipartite} and Proposition~\ref{qc=2}, together with the results in the rest of this section give us the following complete description. 

\begin{theorem}\label{thm:treeC}
  If $T$ is a tree, then
$$q(T^c)=
\begin{cases}
4, &\text{for } T=P_4, \smallskip\\ 
3, &\parbox[t]{.8\textwidth}{for $T\neq P_4$  and either $T=S_{m,1}^k$, $k\in \{2,3\}$, $m\ge1$\\
   or $T=W(2,l,(1,d))$, $l\ge0$, $d\ge1,$\smallskip} \\
2, &\text{otherwise.}
\end{cases}$$
\end{theorem}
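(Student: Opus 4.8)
The plan is to prove Theorem~\ref{thm:treeC} by reducing it, via Theorem~\ref{thm:bipartite} and Proposition~\ref{qc=2}, to the analysis of a few small families of trees. A tree $T$ is bipartite, so it admits a $2$-colouring, say with $m$ black and $n$ white vertices. By Theorem~\ref{thm:bipartite}, if $T$ does not contain a spanning subgraph of the form $K_{m_1,n_1}\cup K_{m_2,n_2}$ with $m=m_1+m_2$, $n=n_1+n_2$ and $m_1m_2\ne0$ or $n_1n_2\ne0$, then $q(T^c)=2$ (note $|T|\ge3$ may be assumed, since $q(K_1^c)=1$ and $q(P_2^c)=1$ are trivial small cases, or simply excluded from the statement). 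So it remains to handle those trees $T$ that \emph{do} contain such a spanning subgraph; by Proposition~\ref{qc=2}, these are exactly the trees of type~0, type~1, or type~2. Type~0 trees $T=K_{1,|T|-1}$ satisfy $T^c=K_{|T|-1}\cup\{v\}$, hence $q(T^c)=2$, as already noted in the text. Thus the entire content of the theorem is the computation of $q(T^c)$ for trees of type~1 and type~2, and one has to match these up with the notation $S_{m,1}^k$ and $W(2,l,(1,d))$ appearing in the statement (this notation is presumably introduced in the subsections; type~2 trees are ``spiders with legs of length at most~2'', which should be the $S_{m,1}^k$ family, and type~1 trees are ``double stars with a connecting path'', which should be the $W(2,l,(1,d))$ family).

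For each of these two families I would split the computation into an upper bound and a lower bound on $q(T^c)$. For the upper bounds, the natural tools are the constructions of Section~\ref{contructions}: since the type~1 and type~2 trees are built by duplicating vertices of smaller trees (appending leaves to a common neighbour is exactly duplication of a leaf), I would use Lemma~\ref{prop:dup} together with the identity $\dup(G,v)^c=\jdup(G^c,v)$ from~\eqref{jdup}, and Lemma~\ref{lem:dup}, to propagate a good matrix for a small base case up to the whole family; alternatively, Theorem~\ref{thm:smp} lets me realise a target spectrum on $T^c$ from a spanning subgraph of $T^c$ that is a disjoint union (via Theorem~\ref{thm:oplus}) of small well-understood pieces such as complete graphs, complete bipartite graphs, or paths. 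Concretely, I expect $T^c$ for a type~1 or type~2 tree to contain, as a spanning subgraph, something like $K_a \cup K_b$ plus a few edges, or a join of such, and the construction $\M{\alpha}$ / $\Mhat{\alpha}{v}$ from \eqref{eq:M}--\eqref{eq:N} together with Corollary~\ref{cor:sqrt} should produce a matrix in $S(T^c)$ with $3$ distinct eigenvalues (or $2$ in degenerate cases). The exceptional value $q(P_4^c)=q(P_4)=4$ is immediate since $P_4^c=P_4$ and $q(P_n)=n$.

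For the lower bounds, the key is inequality~\eqref{eq:Fallat}, $q(G)\ge d(G)+1$, applied to $G=T^c$: I would locate a unique shortest path of length~$2$ (or of length~$3$ in the $P_4$ case) between two vertices of $T^c$, which forces $q(T^c)\ge 3$ (respectively~$\ge4$). For type~2 trees $T=K_{1,s}$ with $t\ge1$ extra leaves appended (and $T\ne K_{1,|T|-1}$), the centre of the original star and the ``grandparent'' vertices should be at unique distance~$2$ in $T^c$; similarly for the type~1 double-star-with-path family $W(2,l,(1,d))$. One must also rule out $q(T^c)=2$ for these trees, which amounts to showing $T^c$ cannot be the join-type or bipartite-complement configuration covered by Theorem~\ref{thm:bipartite}; but since these trees were precisely the ones \emph{excluded} from Theorem~\ref{thm:bipartite}'s hypothesis, a direct check that $d(T^c)\ge2$ suffices. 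Finally, the cases $k=1$ of $S_{m,1}^k$ (a path of length~$\le$ small) and $l$, $d$ at their extreme small values need to be checked by hand to confirm they fall into the ``$q=3$'' bucket and are not further exceptions; I would tabulate the few smallest graphs and invoke the numbering scheme of~\cite{Read:2005:AG:1051300} if needed.

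The main obstacle I anticipate is the lower bound $q(T^c)\ge3$ for those type~1 and type~2 trees whose complement has diameter~$1$ or~$2$ but no \emph{unique} shortest path of length~$2$ --- in that situation \eqref{eq:Fallat} gives only $q(T^c)\ge2$ and a separate argument is needed to exclude $q(T^c)=2$. Ruling out two distinct eigenvalues means ruling out an orthogonal matrix (after shifting/scaling) in $S(T^c)$; for a graph $H$ this is equivalent to $H$ having an orthogonal representation of a special kind, and the standard obstruction is that a connected graph $H$ with $q(H)=2$ must have $\mr(H)$ small and must be the join of two graphs each of which has such a representation --- so I would argue that the relevant $T^c$ fails to decompose as such a join, perhaps by exhibiting a forbidden induced subgraph or by a parity/rank count. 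Getting this exclusion clean and uniform across the family, rather than case-by-case for small orders, is where the real work lies; everything else is assembling the constructive tools already set up in Section~\ref{contructions}.
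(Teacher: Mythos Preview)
Your reduction via Theorem~\ref{thm:bipartite} and Proposition~\ref{qc=2} to types~0,~1,~2 is exactly right, and the treatment of type~0 is correct. However, you have misidentified the remaining two families and, more seriously, misread the dichotomy inside them. In the paper, type~1 trees (two stars joined by a short path) form the family $S^k_{m,n}$ with $k\in\{2,3,4\}$ and \emph{all} $m,n\ge1$, and type~2 trees (depth-$2$ rooted trees) form the family $W(k,l,\delta)$ for \emph{all} $k\ge1$. The trees $S^k_{m,1}$ ($k\in\{2,3\}$) and $W(2,l,(1,d))$ appearing in the theorem statement are only the small \emph{sub}families where $q(T^c)=3$; the vast majority of type~1 and type~2 trees have $q(T^c)=2$.

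Consequently, your plan to prove $q(T^c)\ge3$ for all type~1 and type~2 trees is aimed at a false statement, and the ``main obstacle'' you anticipate (showing $q(T^c)\ge3$ when \eqref{eq:Fallat} fails to give it) is a phantom: in every case where the paper claims $q(T^c)=3$, the lower bound does come cleanly from a unique shortest path of length~$2$ in~$T^c$. The cases where no such path exists are precisely the ones with $q(T^c)=2$, not cases requiring a subtler $q\ge3$ argument. The real work, which your proposal does not address, is establishing $q(T^c)=2$ for the many type~1 and type~2 trees outside the small exceptional subfamilies. The paper does this by producing explicit orthogonal matrices for a handful of base cases (e.g., $P_6^c$, $(S^3_{2,2})^c$, $(S^2_{2,2})^c$, $W(2,1,(2,2))^c$), using the $\Mhat{\alpha}{v}$ construction of Corollary~\ref{cor:sqrt} for $W(k,l)^c$ with $k\ge3$, and then propagating to the full families via leaf duplication (Lemma~\ref{prop:dup} and \eqref{jdup}). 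Note also that $\M{\alpha}$ and $\Mhat{\alpha}{v}$ are orthogonal, so they yield matrices with two distinct eigenvalues, not three.
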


The graphs $S_{m,n}^k$ and $W(k,l,\delta)$ are defined later in this section. %

\subsection{Type 1 exceptional trees}

 Let us consider the trees of type~1 from Corollary~\ref{qc=2}. 
 Depending on the way we add the final edge to $K_{1,m} \cup K_{1,n}$, three cases can occur. Representative examples are given below: 
 \begin{center}
 \begin{tabular}{ccccc}
	\begin{tikzpicture}[style=thick, scale=0.55]
		\draw (0,1.5) -- (0,0) -- (1,1);
		\draw  (1.5,0) -- (0,0);
		\draw  (1,-1) -- (0,0);
		\draw  (0,-1.5) -- (0,0) -- (-1,0) -- (-2,0);				
		\draw  (-1.5,1) -- (-1,0) -- (-1.5,-1);
		\draw[fill=white] (0,1.5) circle (1mm)  (0,0) circle (1mm) (1,1) circle (1mm) (1.5,0) circle (1mm) (1,-1) circle (1mm) (0,-1.5) circle (1mm)  (-1,0) circle (1mm) (-2,0) circle (1mm) (-1.5,1) circle (1mm) (-1.5,-1) circle (1mm);
		\end{tikzpicture}
&\qquad&	\begin{tikzpicture}[style=thick, scale=0.55]
		\draw (0,1.5) -- (0,0) -- (1,1);
		\draw  (1.5,0) -- (0,0);
		\draw  (1,-1) -- (0,0);
		\draw  (0,-1.5) -- (0,0) -- (-2,0) -- (-3,0);				
		\draw  (-2.5,1) -- (-2,0) -- (-2.5,-1);
		\draw[fill=white] (0,1.5) circle (1mm)  (0,0) circle (1mm) (1,1) circle (1mm) (1.5,0) circle (1mm) (1,-1) circle (1mm) (0,-1.5) circle (1mm)  (-2,0) circle (1mm) (-1,0) circle (1mm) (-3,0) circle (1mm) (-2.5,1) circle (1mm) (-2.5,-1) circle (1mm);
		\end{tikzpicture}
&\qquad&	\begin{tikzpicture}[style=thick, scale=0.55]
		\draw (0,1.5) -- (0,0) -- (1,1);
		\draw  (1.5,0) -- (0,0);
		\draw  (1,-1) -- (0,0);
		\draw  (0,-1.5) -- (0,0) -- (-3,0) -- (-4,0);				
		\draw  (-3.5,1) -- (-3,0) -- (-3.5,-1);
		\draw[fill=white] (0,1.5) circle (1mm)  (0,0) circle (1mm) (1,1) circle (1mm) (1.5,0) circle (1mm) (1,-1) circle (1mm) (0,-1.5) circle (1mm)  (-1,0) circle (1mm) (-2,0) circle (1mm) (-3,0) circle (1mm) (-4,0) circle (1mm) (-3.5,1) circle (1mm) (-3.5,-1) circle (1mm);
		\end{tikzpicture}\\
$S^2_{3,5}$ && $S^3_{3,5}$ && $S^4_{3,5}$\\
	\end{tabular}
\end{center}
More formally, for $k,m,n\ge1$, we define $S^k_{m,n}$ to be the graph obtained from a path on $k$ vertices by adding $m$ and $n$ leaves to each of the terminal vertices of the path. Note that type 1 graphs from Corollary~\ref{qc=2} 
have $k$ equal to $2$, $3$ or $4$.   

\begin{proposition}\label{qSkmn}
For $k\ge2$ and $m,n\ge1$, we have $$q(S^k_{m,n})=k+2.$$
\end{proposition}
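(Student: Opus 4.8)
The claim is a two-sided estimate $q(S^k_{m,n})=k+2$, so the plan is to prove the lower bound $q(S^k_{m,n})\ge k+2$ and the upper bound $q(S^k_{m,n})\le k+2$ separately. For the lower bound, the natural tool is the unique-shortest-path inequality~\eqref{eq:Fallat}, namely $q(G)\ge d(G)+1$. In $S^k_{m,n}$, take a leaf $x$ attached to one terminal vertex of the central path and a leaf $y$ attached to the other terminal vertex. Since every leaf at a given end is attached to the same vertex, and the central path is a path, the shortest $x$--$y$ walk is forced: it goes $x\to$ (terminal vertex)$\to\cdots\to$ (other terminal vertex)$\to y$, of length $k+1$, and I would check this is the \emph{unique} shortest path (no alternative route exists because each pendant leaf has degree~$1$ and the spine is a path). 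Hence $d(S^k_{m,n})\ge k+1$, giving $q(S^k_{m,n})\ge k+2$. I expect this direction to be essentially routine; the one point to verify carefully is uniqueness of the shortest path, which holds precisely because there is a single spine connecting the two "brooms".

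For the upper bound, I would construct a matrix $A\in S(S^k_{m,n})$ with exactly $k+2$ distinct eigenvalues. The key structural observation is that $S^k_{m,n}$ is obtained from a graph with few duplicated leaves: the $m$ leaves at one end are mutual duplicates of each other, and likewise the $n$ leaves at the other end. Concretely, $S^k_{m,n}$ is $\dup(\cdots\dup(S^k_{1,1},u)\cdots,u)$ applied $m-1$ times at one pendant vertex $u$ and $n-1$ times at the other pendant vertex, where $S^k_{1,1}=P_{k+2}$. So I would start from a matrix realising $q(P_{k+2})=k+2$ and repeatedly invoke Lemma~\ref{lem:dup} to duplicate the two leaf vertices. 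Lemma~\ref{lem:dup} requires that the diagonal entry at the vertex being duplicated is an eigenvalue of the matrix; each duplication step keeps the spectrum the same as a set while increasing one multiplicity, so $q$ is unchanged. The hurdle is therefore: \emph{can one choose a matrix $A\in S(P_{k+2})$ with $k+2$ distinct eigenvalues such that the diagonal entry at each of the two leaves equals an eigenvalue of $A$?}

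To arrange this, recall that any matrix in $S(P_{k+2})$ is an irreducible symmetric tridiagonal matrix, which automatically has $k+2$ distinct eigenvalues, so the only real constraint is the diagonal-entry condition at the two ends. I would produce such a matrix by a perturbation/continuity argument: start from any $J\in S(P_{k+2})$; its leading principal $(k+1)\times(k+1)$ submatrix $J'$ (deleting one leaf) has $k+1$ distinct eigenvalues, and I can shift $J$ by a scalar so that $a_{1,1}$ equals one of the eigenvalues of $J'$ — but I actually need $a_{1,1}\in\sigma(J)$ and simultaneously $a_{k+2,k+2}\in\sigma(J)$. An efficient way is to build $A$ directly by Lemma~\ref{HS04}: choose a target spectrum and apply the lemma twice, once at each end, so that each leaf's diagonal entry is, by construction, an eigenvalue inherited from the corresponding rank-one extension. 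Alternatively — and this may be the cleanest route — I would note that $S^k_{m,n}$ contains the path $P_{k+2}$ as a \emph{spanning} subgraph only when $m=n=1$, so instead I would directly exhibit $A$ in block form $A=\begin{pmatrix} D_1 & \mathbf b_1 \mathbf 1^T & & \\ \mathbf 1\mathbf b_1^T & T & \mathbf b_2\mathbf 1^T \\ & \mathbf 1 \mathbf b_2^T & D_2\end{pmatrix}$ where $T$ is tridiagonal of size $k$, $D_1=\lambda I_m$, $D_2=\mu I_n$, and use Lemma~\ref{HS04} (in the form of its iterated consequence) to collapse the two scalar-matrix blocks, reducing the eigenvalue count to that of a $(k+2)\times(k+2)$ tridiagonal matrix with diagonal ends $\lambda$ and $\mu$; choosing $\lambda,\mu$ to be eigenvalues of that tridiagonal matrix (possible by a dimension count, since the map from the $2k+1$ free tridiagonal parameters to the relevant spectral data is generically a submersion) finishes the construction. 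The main obstacle, then, is this last existence-of-suitable-tridiagonal-matrix step; I expect it can be handled by either an explicit small example propagated by induction on $k$, or by the Jacobian/implicit-function argument sketched above.
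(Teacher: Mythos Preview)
Your lower bound via the unique--shortest--path inequality~\eqref{eq:Fallat} is correct and is exactly what the paper does.

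For the upper bound the paper proceeds differently, splitting into two cases. For $k\ge3$ it avoids any matrix construction: it invokes the theorem of Johnson and Leal Duarte that for a tree the maximum multiplicity equals the path cover number, checks that the path cover number of $S^k_{m,n}$ is $m+n-1$, deduces $\mr(S^k_{m,n})=k+1$, and concludes $q\le\mr+1=k+2$ via~\eqref{eq:mrbound}. Only for $k=2$ does this fail (there $\mr=4$ when $m,n\ge2$, giving merely $q\le5$), and in that one case the paper does precisely what you propose: it exhibits an explicit $A\in S(P_4)$ with $\sigma(A)=\{\pm1,\pm\sqrt5\}$ whose leaf diagonal entries $1,-1$ are eigenvalues, then applies Lemma~\ref{lem:dup} repeatedly.

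Your uniform duplication strategy would cover all $k$ at once, but the step you yourself flag as ``the main obstacle'' --- producing, for every $k\ge2$, an irreducible tridiagonal matrix of size $k+2$ whose two end diagonal entries are both eigenvalues --- is not actually carried out. The dimension--count/submersion sketch is a heuristic, not a proof, and ``an explicit small example propagated by induction on $k$'' is not an induction: appending a row and column to a Jacobi matrix perturbs every eigenvalue, and no mechanism is given for preserving the end--diagonal--equals--eigenvalue condition. The claim is true (for instance, one can set both end diagonal entries equal and then force that common value into the spectrum by an intermediate--value argument on a one--parameter family), but it requires a genuine argument you have not supplied. The paper's minimum--rank route for $k\ge3$ sidesteps this entirely and is the cleaner choice.
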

\begin{proof}
The diameter of $S^k_{m,n}$ is attained by a unique shortest path consisting of $k+1$ edges, hence $q(S^k_{m,n})\geq k+2$ by~\eqref{eq:Fallat}. The maximum multiplicity $M$ of this tree is equal to its path cover number~$\mu$, by~\cite{johnson1999maximum}. If $k>2$, then $P=m+n-1$, hence the minimum rank is $k+m+n-P=k+1$, so $q(S^k_{m,n})\leq k+2$ and we have equality.

The case $k=2$ requires a different approach, because $S^2_{m,n}$ has minimum rank $4$ for $m,n\ge2$, so the previous argument only yields an upper bound of $5$ for $q(S^2_{m,n})$. To see that in fact $q(S^2_{m,n})=4$, consider the matrix 
\[
  A=\begin{pmatrix}
    1 & \sqrt{2} & 0 & 0 \\
    \sqrt{2} & 0 & 1 & 0 \\
    0 & 1 & 0 & \sqrt{2} \\
    0 & 0 & \sqrt{2} & -1 
  \end{pmatrix}.
\]
We have $A\in S(P_4)$ and $\sigma(A)=\{\pm1,\pm\sqrt5\}$, so the diagonal entries of~$A$ corresponding to the leaves of $P_4$, namely $1$ and $-1$, are in $\sigma(A)$. Duplicating these leaves $m-1$ and $n-1$ times, respectively, Lemma~\ref{lem:dup} yields a matrix in $S(S^2_{m,n})$ with the same set of distinct eigenvalues as $A$, hence $q(S^2_{m,n})=4$.
\end{proof}

\begin{lemma}\label{lem:exceptions}
  If $k\ge 1$,  $M\geq m$ and $N\geq n$, then $$q((S^k_{M,N})^c)\leq q((S^k_{m,n})^c).$$
\end{lemma}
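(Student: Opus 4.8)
The plan is to realise $S^k_{M,N}$ as the outcome of repeatedly duplicating leaves of $S^k_{m,n}$, and then to invoke the monotonicity of $q$ under duplication on the complement side that follows from Lemma~\ref{prop:dup}.

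First I would record the structural observation underlying everything: if $v$ is one of the leaves of $S^k_{i,j}$ attached to a terminal vertex of its central path, then $N(v)$ consists of that single (non-leaf) vertex, so $\dup(S^k_{i,j},v)$ is obtained by appending one further leaf to that same terminal vertex; that is, $\dup(S^k_{i,j},v)\cong S^k_{i+1,j}$, and symmetrically for a leaf at the other end. (When $k=1$ the two ends coincide, but the conclusion is unchanged: duplicating any leaf of $S^1_{i,j}=K_{1,i+j}$ yields $S^1$ with one more leaf.) Iterating this, and using that $m,n\ge1$ guarantees a leaf is always available at each end, I pass from $S^k_{m,n}$ to $S^k_{M,N}$ through a chain of trees
\[
  G_0=S^k_{m,n},\ G_1,\ \ldots,\ G_r=S^k_{M,N}, \qquad r=(M-m)+(N-n),
\]
with $G_{t+1}=\dup(G_t,v_t)$ for a suitable leaf $v_t$ of $G_t$.

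Next I would combine relation~\eqref{jdup}, namely $\dup(G_t,v_t)^c=\jdup(G_t^c,v_t)$, with Lemma~\ref{prop:dup}. Each $G_t$ is a tree on at least $k+m+n\ge3$ vertices, hence not complete, so $G_t^c$ is non-empty and Lemma~\ref{prop:dup} applies to give $q(G_{t+1}^c)=q\bigl(\jdup(G_t^c,v_t)\bigr)\le q(G_t^c)$. Composing these inequalities along the chain yields
\[
  q\bigl((S^k_{M,N})^c\bigr)=q(G_r^c)\le q(G_0^c)=q\bigl((S^k_{m,n})^c\bigr),
\]
which is the claim.

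There is essentially no serious obstacle here: the only points needing care are in the first step — checking that duplicating an end-leaf really produces the next member of the $S^k_{\bullet,\bullet}$ family and that such a leaf always exists, so the construction never stalls — together with the trivial remark that every $G_t$ has at least three vertices, so that $G_t^c$ is non-empty and Lemma~\ref{prop:dup} may legitimately be applied to it.
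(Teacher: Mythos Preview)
Your proof is correct and follows essentially the same route as the paper: duplicate a leaf at a terminal vertex of the central path to pass from $S^k_{i,j}$ to $S^k_{i+1,j}$, use relation~\eqref{jdup} together with Lemma~\ref{prop:dup} to conclude $q((S^k_{i+1,j})^c)\le q((S^k_{i,j})^c)$, and finish by induction and symmetry. Your version is simply more explicit about the chain of duplications, the availability of leaves, and the non-emptiness of $G_t^c$ needed to invoke Lemma~\ref{prop:dup}.
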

\begin{proof}
  Let $v$ be a leaf of $S^k_{m,n}$ contained in an induced
  subgraph $K_{m,1}$. Since $S^{k}_{m+1,n}=\dup(S^k_{m,n},v)$, we have
  \[q((S^k_{m+1,n})^c)=q((\dup(S^k_{m,n},v))^c)\leq q((S^k_{m,n})^c)\]
  by Lemma~\ref{prop:dup}.
  The result follows   by induction and symmetry.
\end{proof}

\begin{proposition}\label{prop:S4}
  We have $q((S^4_{m,n})^c)=2$ for all $m,n\ge1$.
\end{proposition}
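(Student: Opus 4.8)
The plan is to reduce to a single base case using the monotonicity of Lemma~\ref{lem:exceptions}, and then to write down an explicit orthogonal matrix. The lower bound is immediate: $S^4_{m,n}$ is a tree on $4+m+n\ge 6$ vertices and is not complete, so $(S^4_{m,n})^c$ contains an edge and $q((S^4_{m,n})^c)\ge2$. For the upper bound, observe that $S^4_{1,1}$ is the path on four vertices with one extra leaf appended at each end, i.e.\ $S^4_{1,1}=P_6$; hence by Lemma~\ref{lem:exceptions}, applied with base point $m=n=1$, we get $q((S^4_{m,n})^c)\le q((S^4_{1,1})^c)=q(P_6^c)$ for all $m,n\ge1$. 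So it suffices to prove $q(P_6^c)\le2$, that is, to produce a symmetric $6\times6$ matrix $M\in S(P_6^c)$ with $M^2=I$.

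I would first note that the machinery of Section~\ref{contructions} does not deliver such an $M$ directly. Any matrix in $S(P_6^c)$ with two distinct eigenvalues, written in block form with respect to a partition of $V(P_6^c)$ into two cliques, has diagonal blocks with no zero off-diagonal entries; but the only partition of $V(P_6^c)=\{1,\dots,6\}$ into two cliques is $\{1,3,5\}\cup\{2,4,6\}$, for which the cross block $B$ is forced (up to permutation) to have the pattern $\npmatrix{0&*&*\\0&0&*\\ *&0&0}$, whose last row is orthogonal to the other two, so that $BB^T$ is always reducible. Thus Corollary~\ref{cor:sqrt} can only yield a matrix realising a \emph{proper} spanning subgraph of $P_6^c$; and $\widehat M$ from~\eqref{eq:N} is ruled out as well, because no vertex of $P_6^c$ has a clique for its neighbourhood. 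So $M$ has to be written out by hand.

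Concretely, I would look for $M=\npmatrix{A&B\\B^T&C}$ (rows/columns indexed $1,3,5,2,4,6$) with $A,C$ full symmetric $3\times3$ matrices and $B$ of the pattern above; then $M^2=I$ amounts to $A^2+BB^T=I$, $C^2+B^TB=I$, $AB+BC=0$. Solving this system — where, once the first and third equations hold, most of $C^2+B^TB=I$ turns out to be automatic, leaving only a small family of genuine constraints — one arrives for instance at
\[
M=\frac1{5}\npmatrix{
0 & 0 & 2\sqrt3 & \sqrt2 & \sqrt{10} & 1\\
0 & \sqrt6 & 0 & -\sqrt5 & 2 & -\sqrt{10}\\
2\sqrt3 & 0 & -\sqrt6 & 0 & \sqrt5 & \sqrt2\\
\sqrt2 & -\sqrt5 & 0 & \sqrt6 & 0 & -2\sqrt3\\
\sqrt{10} & 2 & \sqrt5 & 0 & -\sqrt6 & 0\\
1 & -\sqrt{10} & \sqrt2 & -2\sqrt3 & 0 & 0}
\]
(now with rows and columns in the natural order $1,\dots,6$). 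One checks directly that $M$ is symmetric with the pattern of $P_6^c$, that $M^2=I$, and that $\tr M=0$, so $\sigma(M)=\{1,1,1,-1,-1,-1\}$ and $q(P_6^c)\le q(M)=2$. Together with the lower bound this gives $q(P_6^c)=2$, hence $q((S^4_{m,n})^c)=2$ for all $m,n\ge1$.

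The main obstacle is exactly this last step: since the general constructions of Section~\ref{contructions} provably fail for $P_6^c$, one is forced to exhibit (or otherwise certify the existence of) the ad hoc symmetric orthogonal matrix above; the rest is routine.
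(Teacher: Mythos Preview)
Your proof is correct and follows the same route as the paper: reduce to the base case $(m,n)=(1,1)$ via Lemma~\ref{lem:exceptions}, then exhibit an explicit symmetric orthogonal matrix in $S(P_6^c)$ (the paper uses a different such matrix, of the block form $2^{-5/2}\left(\begin{smallmatrix}A&B\\B&-A\end{smallmatrix}\right)$, and also cites \cite[Corollary~6.9]{2017arXiv170801821B}). Your additional analysis showing that Corollary~\ref{cor:sqrt} necessarily fails for $P_6^c$ because the unique clique bipartition forces $BB^T$ to be reducible is correct and a nice observation not made in the paper.
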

\begin{proof}
  By Lemma~\ref{lem:exceptions}, it suffices to establish this for
  $(m,n)=(1,1)$, i.e., to prove that $q(P_6^c)=2$. For this, observe that for
$$A=\npmatrix{
    3 & 2 &  \sqrt{6} \\
    2 & 0 & -2 \sqrt{6} \\
     \sqrt{6} & -2 \sqrt{6} & 1
   }\text{
 and }B=\npmatrix{
     3  & 2 & 0 \\
     2  & 0 & 0 \\
     0 & 0 & -1
  },$$
  the following matrix is orthogonal and lies in $S(P_6^c)$:
  $$X=2^{-5/2}
\npmatrix{
  A & B \\ B & -A
}.
$$
(Alternatively, this follows from~\cite[Corollary~6.9]{2017arXiv170801821B} since $P_6^c=G188$.)
\end{proof}

\begin{proposition}\label{prop:S3}
We have \[q((S_{m,n}^3)^c)=\begin{cases}   
    3,&\text{$\min\{m,n\}=1$,}\\
    2,&m,n\ge2.
  \end{cases}\]
\end{proposition}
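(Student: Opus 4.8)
The plan is to treat the two cases separately. In each, the upper bound on $q((S^3_{m,n})^c)$ is obtained by reducing, via Lemma~\ref{lem:exceptions}, to a single small base graph, while the lower bound comes either from \eqref{eq:Fallat} or from the mere existence of an edge in the complement. Label the vertices of $S^3_{m,n}$ as $p_1p_2p_3$ (the central path), $a_1,\dots,a_m$ (leaves at $p_1$) and $b_1,\dots,b_n$ (leaves at $p_3$); since $S^3_{m,n}\cong S^3_{n,m}$ we may assume $n=1$ in the first case, and we abbreviate $b_1=b$.

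\emph{The case $\min\{m,n\}=1$.} For the upper bound, $S^3_{m,1}$ is obtained from $S^3_{m-1,1}$ by duplicating a leaf of $p_1$, so Lemma~\ref{lem:exceptions} gives $q((S^3_{m,1})^c)\le q((S^3_{1,1})^c)$; since $S^3_{1,1}=P_5$ and $P_5^c$ contains $C_5$, the bound \eqref{eq:cyclebound} yields $q((S^3_{1,1})^c)=q(P_5^c)\le 3$ (this value is also recorded after Figure~\ref{fig:exceptions}). For the lower bound I would invoke \eqref{eq:Fallat}: in $(S^3_{m,1})^c$ the vertices $p_1$ and $p_2$ are non-adjacent, with $N(p_1)=\{p_3,b\}$ and $N(p_2)=\{b,a_1,\dots,a_m\}$, so $b$ is their unique common neighbour and $p_1\,b\,p_2$ is a unique shortest path of length $2$; hence $d((S^3_{m,1})^c)\ge 2$ and $q((S^3_{m,1})^c)\ge 3$. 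Together these give $q((S^3_{m,1})^c)=3$.

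\emph{The case $m,n\ge 2$.} Here $q((S^3_{m,n})^c)\ge 2$ is immediate, because the tree $S^3_{m,n}$ is not complete, so its complement has an edge; and by Lemma~\ref{lem:exceptions} the remaining task is to prove $q((S^3_{2,2})^c)\le 2$. The main difficulty is that the usual shortcuts are unavailable: $S^3_{2,2}$ contains $K_{1,3}\cup K_{1,2}$ as a spanning subgraph (centre the two stars at $p_1$ and $p_3$), so Theorem~\ref{thm:bipartite} does not apply, while the duplication reductions of Lemmas~\ref{prop:dup} and~\ref{lem:dup} (equivalently, a further application of Lemma~\ref{lem:exceptions}) only give $q((S^3_{2,2})^c)\le q((S^3_{1,2})^c)=3$, which is not enough. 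So I would instead write down an explicit symmetric matrix with two distinct eigenvalues and pattern $(S^3_{2,2})^c$, in the spirit of the matrix displayed in the proof of Proposition~\ref{prop:S4}.

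Concretely, it suffices to exhibit a rank-$3$ orthogonal projection $P\in\R^{7\times 7}$ whose off-diagonal zero pattern is exactly $E(S^3_{2,2})$, since then $M=2P-I$ satisfies $M=M^T$, $M^2=I$, and $M\in S((S^3_{2,2})^c)$. Writing $P=UU^T$ with $U\in\R^{7\times 3}$ having orthonormal columns, the rows $u_1,\dots,u_7$ of $U$ must satisfy $\sum_i u_iu_i^T=I_3$ together with $\langle u_i,u_j\rangle=0$ for $i\ne j$ exactly when $\{i,j\}\in E(S^3_{2,2})$, i.e.\ exactly for the pairs $\{1,2\},\{2,3\},\{1,4\},\{1,5\},\{3,6\},\{3,7\}$ under the labelling $p_1,p_2,p_3,a_1,a_2,b_1,b_2$, with all other pairwise inner products nonzero. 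Placing $u_1$ along $e_1$, $u_2$ along $e_3$, $u_3$ in $\operatorname{span}(e_1,e_2)$, $u_4,u_5$ in $u_1^\perp=\operatorname{span}(e_2,e_3)$ and $u_6,u_7$ in $u_3^\perp$, the six orthogonality relations hold automatically, and the normalisation $\sum_i u_iu_i^T=I_3$ reduces to a short list of explicit polynomial equations in the remaining coordinates that can be solved while keeping the other fifteen inner products nonzero; one workable choice is $u_1=(\tfrac1{\sqrt3},0,0)$, $u_2=(0,0,\tfrac{\sqrt3}{2})$, $u_3=(\tfrac1{\sqrt3},\tfrac1{\sqrt3},0)$, $u_4=(0,\tfrac1{\sqrt6},\tfrac14)$, $u_5=(0,\tfrac1{\sqrt6},-\tfrac14)$, $u_6=(\tfrac1{\sqrt6},-\tfrac1{\sqrt6},\tfrac14)$, $u_7=(\tfrac1{\sqrt6},-\tfrac1{\sqrt6},-\tfrac14)$. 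The \textbf{main obstacle} is precisely this step: confirming that such a configuration exists — that the six orthogonality constraints, the six normalisation equations and the fifteen open non-vanishing conditions are simultaneously satisfiable — and recording the resulting matrix; once it is written down, checking that its spectrum is $\{-1,1\}$ and that its pattern is $(S^3_{2,2})^c$ is a routine finite computation. This gives $q((S^3_{2,2})^c)=2$, and Lemma~\ref{lem:exceptions} then propagates $q((S^3_{m,n})^c)=2$ to all $m,n\ge 2$.
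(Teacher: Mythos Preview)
Your proof is correct and follows essentially the same approach as the paper: the monotonicity from Lemma~\ref{lem:exceptions}, the unique shortest path lower bound via~\eqref{eq:Fallat} (indeed the very same path $p_1\,b\,p_2$), and an explicit orthogonal matrix in $S((S^3_{2,2})^c)$ for the base case $m=n=2$. Your explicit vectors $u_1,\dots,u_7$ do satisfy all the required constraints (the six orthogonality relations, $\sum_i u_iu_i^T=I_3$, and the fifteen non-vanishing inner products), so the rank-$3$ projection $P=UU^T$ gives $2P-I\in S((S^3_{2,2})^c)$ with spectrum $\{-1,1\}$; the paper simply records a different explicit orthogonal matrix without the intermediate Gram-matrix description.
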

\begin{proof}
  We have $q((S_{m,n}^3)^c))\le q((S^3_{1,1})^c)=q(P_5^c)=3$ by Lemma~\ref{lem:exceptions}. 
  
 The graph $(S_{m,1}^3)^c$ consists of a clique on $m+3$ vertices with two edges $\{v,y\},\{w,y\}$ removed, together with an extra vertex~$x$ joined only to $y$ and~$w$. The path $\{x,w\}, \{w,v\}$ is the 
 unique shortest path from $x$ to $v$ in  $(S_{m,1}^3)^c$, hence~\eqref{eq:Fallat} gives $q((S_{m,1}^3)^c)\ge3$.
\begin{center}
\begin{tabular}{c}
 \begin{tikzpicture}[style=thick, scale=0.7]
	\draw \foreach \x in {0,1,2} {
				(\x,0) -- (\x + 1,0) };
	\draw \foreach \x in {120,150,...,240} {
				(\x:1) -- (0,0) };
	\draw[fill=white] \foreach \x in {120,150,...,240} {
		                    (\x:1) circle (1mm)
		};		
	\draw[fill=white] \foreach \x in {0,1,2,3} {
		                    (\x,0) circle (1mm)
		};
	\node [below] at (0,0) {$x$};		
	\node [below] at (1,0) {$v$};		
	\node [below] at (2,0) {$y$};		
	\node [below] at (3,0) {$w$};		
 \end{tikzpicture}\\
 $S^3_{5,1}$
 \end{tabular}
 \end{center}

  For $m=n=2$, we can find an orthogonal matrix $X\in S((S_{2,2}^3)^c)$ such as
  \[X=\frac16
   \left(
   \begin{array}{ccccccc}
    -1 & 1 &\alpha_{+} & \alpha_{-} & 2 \sqrt{2} & 3 & 0
      \\
    1 & -1 & \alpha_{-} &\alpha_{+} & -2 \sqrt{2} & 3 &
      0 \\
   \alpha_{+} & \alpha_{-} & -1 & 1 & 2 \sqrt{2} & 0 & 3
      \\
    \alpha_{-} &\alpha_{+} & 1 & -1 & -2 \sqrt{2} & 0 &
      3 \\
    2 \sqrt{2} & -2 \sqrt{2} & 2 \sqrt{2} & -2 \sqrt{2} & -2 & 0 & 0 \\
    3 & 3 & 0 & 0 & 0 & 0 & -3 \sqrt{2} \\
    0 & 0 & 3 & 3 & 0 & -3 \sqrt{2} & 0
   \end{array}
   \right)\] where $\alpha_{\pm}=\tfrac32\sqrt2\pm2$.
 Hence $q((S_{2,2}^3)^c)=2$, and the result follows by Lemma~\ref{lem:exceptions}.
\end{proof}

\begin{proposition}\label{prop:S2}
We have $$q((S_{m,n}^2)^c)=
  \begin{cases}
    4,&m=n=1\\
    3,&\text{$m\ne n$ and $\min\{m,n\}=1$}\\
    2,&m,n\ge2.
  \end{cases}$$
\end{proposition}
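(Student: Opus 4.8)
The plan is to treat the three cases separately, recycling earlier machinery as much as possible. For $m=n=1$ we have $S^2_{1,1}=P_4$, so the claim is $q(P_4^c)=4$. Since $P_4^c=P_4$ (the path on four vertices is self-complementary), this is just $q(P_4)=4$, which holds because $P_4$ is a path on $|P_4|$ vertices (the remark in the Notation subsection: $q(G)=|G|$ iff $G=P_{|G|}$). For the case $m,n\ge2$, the strategy is identical to Propositions~\ref{prop:S4} and~\ref{prop:S3}: by Lemma~\ref{lem:exceptions} it suffices to produce a single orthogonal matrix $X\in S((S^2_{2,2})^c)$, since then $q((S^2_{m,n})^c)\le q((S^2_{2,2})^c)=2$, and the lower bound $q(G^c)\ge2$ is automatic because $(S^2_{m,n})^c$ has an edge (as $S^2_{m,n}$ is bipartite on $\ge3$ vertices, so not complete). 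I would either exhibit such an $X$ explicitly (as in the proof of Proposition~\ref{prop:S3}, writing it as a suitably scaled block matrix $\left(\begin{smallmatrix}A&B\\ B&-A\end{smallmatrix}\right)$ of the form $\M{1}$ from Corollary~\ref{cor:sqrt}), or—more cleanly—observe that $(S^2_{2,2})^c$ contains $K_{2,2}\cup K_{2,2}$... wait: one must instead verify directly that $S^2_{2,2}$ does \emph{not} contain the forbidden spanning subgraph $K_{m_1,n_1}\cup K_{m_2,n_2}$ of Theorem~\ref{thm:bipartite} with a proper partition, and then conclude $q((S^2_{2,2})^c)=2$ from that theorem; either route works, but the explicit matrix is the safest.

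The genuinely new case is $m\ne n$ with $\min\{m,n\}=1$, say $n=1$ and $m\ge2$; we must show $q((S^2_{m,1})^c)=3$. For the upper bound, Lemma~\ref{lem:exceptions} does not immediately help (it would compare with $q((S^2_{1,1})^c)=4$, the wrong direction), so instead I would use it in the other orientation: $q((S^2_{m,1})^c)\le q((S^2_{2,1})^c)$, and then establish $q((S^2_{2,1})^c)\le 3$ directly, for instance by writing down an explicit symmetric matrix in $S((S^2_{2,1})^c)$ with three distinct eigenvalues, or by finding a convenient spanning subgraph $\tilde G$ of $(S^2_{2,1})^c$ together with a matrix in $S(\tilde G)$ with the SSP and three distinct eigenvalues, invoking Theorem~\ref{thm:smp}. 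A clean way is to identify the structure of $(S^2_{m,1})^c$ explicitly (analogously to the description of $(S^3_{m,1})^c$ in the proof of Proposition~\ref{prop:S3}: a near-clique with a few edges deleted plus a pendant-like vertex) and build a matrix with spectrum $\{\lambda_1,\lambda_2,\lambda_3\}$ by starting from a small matrix and duplicating a leaf vertex whose diagonal entry is an eigenvalue, via Lemma~\ref{lem:dup}; concretely, one finds $A\in S((S^2_{2,1})^c)$ with $q(A)=3$ and $a_{v,v}\in\sigma(A)$ for the vertex $v$ corresponding to a leaf-of-a-leaf, then duplicates to climb up to general $m$.

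For the matching lower bound $q((S^2_{m,1})^c)\ge 3$ when $m\ge2$, the tool is inequality~\eqref{eq:Fallat}: I would exhibit two vertices of $(S^2_{m,1})^c$ joined by a unique shortest path of length $2$. Writing $S^2_{m,1}$ as the path $a$--$b$ with $m$ leaves $x_1,\dots,x_m$ attached at $a$ and one leaf $y$ attached at $b$, in the complement the vertex $y$ is non-adjacent only to $b$, while $x_1$ is non-adjacent only to $a$; one checks that $x_1$ and $b$ are non-adjacent in $(S^2_{m,1})^c$ (since $\{x_1,b\}\notin E(S^2_{m,1})$... actually $\{x_1,b\}$ is not an edge of $S^2_{m,1}$, so it \emph{is} an edge of the complement — I will need to recompute the adjacencies carefully), so the candidate unique shortest path must be chosen among genuinely non-adjacent pairs at distance~$2$. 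The main obstacle, and the step deserving the most care, is precisely this: correctly pinning down the adjacency structure of $(S^2_{m,1})^c$ and exhibiting a pair of vertices whose \emph{only} length-2 connecting path is unique — ruling out all alternative length-2 paths requires that the two vertices have a single common neighbour, which forces a careful count once $m$ is large. Once the right pair is found, \eqref{eq:Fallat} gives $q\ge 3$ immediately, and combined with the upper bound and the case analysis above, the proposition follows; a final sentence should note that for $m=n=1$ the bipartite-subgraph obstruction of Theorem~\ref{thm:bipartite} is present (indeed $S^2_{1,1}=P_4=K_{1,1}\cup K_{1,1}$ plus an edge, a type~1 tree), consistent with the exceptional value $4$.
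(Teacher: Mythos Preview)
Your overall plan matches the paper's proof: same three-case split, same use of Lemma~\ref{lem:exceptions} to reduce to base cases, and an explicit orthogonal matrix for $(S^2_{2,2})^c$. Two points deserve correction or sharpening.

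First, the aside that one could ``verify directly that $S^2_{2,2}$ does not contain the forbidden spanning subgraph'' of Theorem~\ref{thm:bipartite} is wrong: $S^2_{2,2}$ is precisely a type~1 tree of Proposition~\ref{qc=2}, and with the $2$-colouring having the two centres black and the four leaves white it contains $K_{1,2}\cup K_{1,2}$ as a spanning subgraph (with $m_1m_2=1\ne0$). So Theorem~\ref{thm:bipartite} genuinely does not apply here, and the explicit matrix is not merely ``safest'' but necessary. The paper gives the matrix in block form $\tfrac1{\sqrt3}\left(\begin{smallmatrix}A&B\\B&-A\end{smallmatrix}\right)$ (and notes $(S^2_{2,2})^c=G181$).

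Second, for the case $n=1$, $m\ge2$, the paper's arguments are much cleaner than what you sketch. For the lower bound, rather than hunting for a specific pair of vertices with a unique common neighbour, observe that in $(S^2_{m,1})^c$ the vertex $a$ (the centre with $m$ leaves in $S^2_{m,1}$) is adjacent only to $y$, so $a$ is a \emph{leaf} of the complement; any connected graph with a leaf has a unique shortest path of length~$2$, giving $q\ge3$ by~\eqref{eq:Fallat} immediately. For the upper bound at $m=2$, the paper avoids any explicit matrix or duplication argument: $(S^2_{2,1})^c$ has $5$ vertices and contains a $4$-cycle, so~\eqref{eq:cyclebound} gives $q\le 5-2=3$ directly, and Lemma~\ref{lem:exceptions} finishes the case.
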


\begin{proof}
Note that  $(S_{1,1}^2)^c=P_4$, so $q((S_{1,1}^2)^c)=4$. 

 The graph $(S_{m,1}^2)^c$, $m \geq 2$, is connected and has a leaf, so has a unique shortest path of length 2. Hence $q((S_{m,1}^2)^c)\geq 3$
 by~\eqref{eq:Fallat}. In the case $m=2$, the graph $(S_{2,1}^2)^c$ contains a $4$-cycle, thus  $q((S_{2,1}^2)^c)\leq |(S_{2,1}^2)^c|-2=3$
 by~\eqref{eq:cyclebound}. The result for arbitrary $m\ge2$ follows from Lemma~\ref{lem:exceptions}.

Lemma~\ref{lem:exceptions} allows us to reduce the $m,n \geq 2$ case to
  $(m,n)=(2,2)$, i.e., it is enough to prove that $q(G)=2$ for $G=(S_{2,2}^2)^c$. This
  follows by considering the orthogonal matrix $X\in S(G)$ given by 
    $$X=\frac1{\sqrt3}\npmatrix{
  A & B \\ B & -A
}
, \,A=\npmatrix{
    1&1&1\\1&0&-1\\1&-1&0
} \text{
 and }B=\frac1{\sqrt{2}}\npmatrix{
      0 & 0 & 0\\
      0 & 1 & -1\\
      0 & -1 & 1
 }.$$
Note that $(S_{2,2}^2)^c=G181$, so the same equality is also proven in~\cite[Lemma~6.12]{2017arXiv170801821B}. 
\end{proof}

\subsection{Type 2 exceptional trees}

Type 2 exceptional trees are precisesly the rooted trees~$T$ with depth 2 which are not equal to $K_{1,|T|-1}$. So they consist of a root vertex (which we label as $0$) with $k+l$ neighbours labelled as $1,2,\dots,k,k+1,\dots,k+l$, where $k\geq 1$, $l\geq 0$ and $(k,l)\ne (1,0)$, so that vertices $k+1,\dots,k+l$ are leaves of~$T$, and for $1\leq i\leq k$, vertex $i$ has neighbours consisting of $0$ and $d_i>0$ further leaves of~$T$. Note that $|T|=1+k+l+\sum_{i=1}^k d_i$. Let us denote this tree by $T=W(k,l,\delta)$ where $\delta=(d_1,\dots,d_k)\in \mathbb N^k$. We also write $W(k,l)=W(k,l,(1,\dots,1))$; this tree consists of $k$ copies of $P_3$ and $l$ copies of $P_2$, all joined at a common end vertex. Finally, let $\delta_{\min}$ denote the minimal element in $\delta$. 
 \begin{center}
 \begin{tabular}{ccc}
	\begin{tikzpicture}[style=thick, scale=0.7]
		\draw \foreach \x in {4,4.5,5} {
				(\x,0) -- (4.5,1) };
		\draw \foreach \x in {5.5,6,6.5} {
				(\x,0) -- (6,1) };
		\draw \foreach \x in {4.5,6,7,8,9} {
				(\x,1) -- (6,2) };
		\draw (7,0)--(7,1);
		\draw[fill=white] \foreach \x in {4,4.5,5,5.5,6,6.5,7} {
		                    (\x,0) circle (1mm)
		};	
		\draw[fill=white] \foreach \x in {4.5,6,7,8,9} {
		                    (\x,1) circle (1mm)
		};		
		\draw[fill=white] (6,2) circle (1mm);
		\end{tikzpicture}
		& \qquad \qquad &
	\begin{tikzpicture}[style=thick, scale=0.7]
		\draw \foreach \x in {1,2,3} {
				(\x,0) -- (\x,1) };
		\draw \foreach \x in {1,2,...,5} {
				(\x,1) -- (3,2) };
		\draw[fill=white] \foreach \x in {1,2,...,5} {
		                    (\x,1) circle (1mm)
		};	
		\draw[fill=white] \foreach \x in {1,2,3} {
		                    (\x,0) circle (1mm)
		};		
		\draw[fill=white] (3,2) circle (1mm);
		\end{tikzpicture}
\\
$W(3,2,(3,3,1))$ && $W(3,2)=W(3,2,(1,1,1))$  \\
	\end{tabular}
\end{center}
Observe that $W(1,0,(d_1))=K_{1,d_1+1}$ and $W(1,l,(d_1))=S^2_{l,d_1}$ for $l,d_1\ge1$, so we have established the values of $q$ for these graphs and their complements above in Proposition~\ref{qSkmn} and Proposition~\ref{prop:S2}, and we do not need to consider these graphs further.

\begin{proposition}\label{prop:qW}
  We have $q(W(k,l,\delta))=5$ for all $k\ge2$, all $l\ge0$
  and all $\delta\in \mathbb{N}^k$.
\end{proposition}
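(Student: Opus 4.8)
The plan is to obtain the bound $q(W(k,l,\delta))\ge 5$ from the longest‑unique‑shortest‑path estimate, and the bound $q(W(k,l,\delta))\le 5$ by writing down a single symmetric matrix with at most five distinct eigenvalues that respects the automorphisms of the tree.

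\smallskip

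For the lower bound: $W(k,l,\delta)$ is a tree, so the (unique) path between any two vertices is the unique shortest path between them, whence $d(W(k,l,\delta))=\diam(W(k,l,\delta))$. Since $k\ge 2$, two leaves lying in different depth‑$2$ branches are at distance $4$, while every vertex is within distance $2$ of the root, so $\diam(W(k,l,\delta))=4$ and \eqref{eq:Fallat} gives $q(W(k,l,\delta))\ge 5$.

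\smallskip

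For the upper bound I would take $A\in S(W(k,l,\delta))$ constant on the orbits of the automorphism group: each of the $d_i$ leaves of the $i$th pendant‑path gets the common diagonal entry $\phi$ and edge‑weight $e/\sqrt{d_i}$ to its middle vertex $m_i$; each $m_i$ gets the common diagonal entry $c$ and edge‑weight $b$ to the root; each of the $l$ pendant leaves of the root gets the common diagonal entry $\psi$ and edge‑weight $\gamma$; and the root gets diagonal entry $a_0$. Decomposing $\mathbb R^{|W(k,l,\delta)|}$ into the invariant subspace of this symmetry and its complement splits the spectrum: zero‑sum vectors supported on the leaves of a fixed pendant‑path (total dimension $\sum d_i-k$) give eigenvalue $\phi$; zero‑sum vectors on the root's pendant leaves (dimension $l-1$, present only when $l\ge1$) give $\psi$; vectors vanishing at the root and at all pendant leaves but differing between two pendant‑paths give the two eigenvalues $\mu^{\pm}$ of $\bigl(\begin{smallmatrix}\phi&e\\ e&c\end{smallmatrix}\bigr)$, each with multiplicity $k-1$; and on the remaining subspace $A$ acts as a matrix $N$ which, ordered by the leaf‑class, the $m_i$‑class, the root, and the pendant‑leaf‑class, lies in $S(P_4)$ (or $S(P_3)$ if $l=0$) and has $\bigl(\begin{smallmatrix}\phi&e\\ e&c\end{smallmatrix}\bigr)$ as its principal submatrix on the first two coordinates. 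These dimensions add up to $|W(k,l,\delta)|$, so $\sigma(A)\subseteq\{\phi,\mu^{+},\mu^{-}\}\cup\sigma(N)$.

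\smallskip

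It then remains to choose the free parameters so that $\sigma(N)$, which has at most four elements, meets $\{\phi,\mu^{+},\mu^{-}\}$ in enough points. When $l=0$, $N$ is $3\times3$ and the choice $a_0=\phi$ makes $\phi\in\sigma(N)$ (two rows of $N-\phi I$ become proportional), so $|\sigma(A)|\le |\{\mu^{+},\mu^{-}\}\cup\sigma(N)|\le 5$; concretely $c=a_0=\phi=0$, $e=1$, $b=1/\sqrt k$ gives $\sigma(A)=\{0,\pm1,\pm\sqrt2\}$. The case $l\ge1$ is the crux and the main obstacle: here $N$ is an irreducible tridiagonal matrix, hence has simple spectrum, and running the three‑term recurrence for its leading principal characteristic polynomials shows that $N$ can contain $\mu^{+}$ or $\mu^{-}$ but never both, so one cannot collapse both of them into $\sigma(N)$. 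The way around this is to force $\mu^{+}\in\sigma(N)$ by taking $\psi=\mu^{+}$ (which makes $\mu^{+}$ a root of $\det(\lambda I-N)$ automatically), and then to solve the one remaining equation relating $a_0$ and $\gamma$ that also puts $\phi\in\sigma(N)$; this is always solvable, e.g.\ $c=\phi=0$, $e=1$, $\psi=a_0=1$, $\gamma\sqrt l=1$, $b\sqrt k=1$, for which $N=\bigl(\begin{smallmatrix}0&1&0&0\\ 1&0&1&0\\ 0&1&1&1\\ 0&0&1&1\end{smallmatrix}\bigr)$ and $\sigma(N)=\{0,1,\tfrac{1+\sqrt{13}}2,\tfrac{1-\sqrt{13}}2\}$. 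With this choice $\sigma(A)=\{0,1,-1,\tfrac{1+\sqrt{13}}2,\tfrac{1-\sqrt{13}}2\}$ has exactly five elements, so $q(W(k,l,\delta))\le 5$, and combined with the lower bound this yields $q(W(k,l,\delta))=5$.
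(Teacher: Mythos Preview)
Your argument is correct; one small slip is that for $l\ge2$ the inclusion should read $\sigma(A)\subseteq\{\phi,\psi,\mu^{+},\mu^{-}\}\cup\sigma(N)$, since the zero-sum pendant-leaf space you identified contributes $\psi$ with multiplicity $l-1$. You silently drop $\psi$ when summarising, but as you later take $\psi=\mu^{+}$ the conclusion is unaffected.

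The paper's proof is structurally different and shorter. Rather than building a matrix in $S(W(k,l,\delta))$ for arbitrary $(l,\delta)$ and decomposing by symmetry, it first reduces to the two base cases $W(k,0)$ and $W(k,1)$: any $W(k,l,\delta)$ is obtained from one of these by repeatedly duplicating leaves, and Lemma~\ref{lem:dup} guarantees that if the diagonal entry at the duplicated leaf is already an eigenvalue, duplication preserves the set of distinct eigenvalues. The paper then simply writes down one matrix in each of $S(W(k,0))$ and $S(W(k,1))$ with five distinct eigenvalues and with every diagonal entry among them, and checks the spectra directly. Your construction is essentially what one gets by unwinding that reduction: the $\phi$- and $\psi$-eigenspaces you isolate are exactly the spaces created by leaf duplication, and your quotient matrix $N$ plays the role of the base-case spectral computation (indeed, the paper's explicit matrices correspond to your construction with $b\sqrt{k}=\sqrt{3}$ and $b\sqrt{k}=2$ in place of your $b\sqrt{k}=1$). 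Your route is more self-contained and makes the provenance of each eigenvalue transparent; the paper's route is quicker because the duplication machinery is already available and the base-case matrices can be verified by inspection.
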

\begin{proof}
  Note that for $k\ge2$, the graph $G=W(k,l,\delta)$ contains a unique shortest path of four edges, so
  $q(G)\ge5$ by~\eqref{eq:Fallat}. Moreover, $G$ may be obtained from either $W(k,0)$ or $W(k,1)$ (depending on whether or not $l=0$) by repeated duplication of leaves. By Lemma~\ref{lem:dup}, it suffices to find $X\in S(W(k,0))$ and
  $Y\in S(W(k,1))$ so that the diagonal entries of $X$ and of~$Y$ are contained in $\sigma(X)$ and
  $\sigma(Y)$, respectively, and $q(X)=q(Y)=5$. For this, take $c=(1,1,\dots,1)^T\in \mathbb{R}^k$, define $v=(\tfrac3k)^{1/2}c$, $w=(\tfrac4k)^{1/2}c$,
  \[ X=
    \begin{pmatrix}
      0&I_k&0\\
      I_k&0&v\\
      0&v^T&0
    \end{pmatrix}\quad\text{and}\quad
     Y=
    \begin{pmatrix}
      0&I_k&0&0\\
      I_k&0&w&0\\
      0&w^T&1&1\\
      0&0&1&1
    \end{pmatrix}\]
  and note that the sets of distinct eigenvalues of~$X$ and of~$Y$ are $\{0,\pm1,\pm2\}$ and $\{0,\pm1,-2,3\}$, respectively.
\end{proof}

We now turn to the calculation of $q(W(k,l,\delta)^c)$. Lemma~\ref{prop:dup} immediately yields:
\begin{lemma}\label{lem:W}
  For any $k\geq 1$, $l\geq 0$ and $\delta \in \mathbb N^k$, we have \[q(W(k,l,\delta)^c)\leq q(W(k,\min\{1,l\})^c).\]
\end{lemma}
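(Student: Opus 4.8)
The plan is to realise $W(k,l,\delta)$ as obtained from the smaller tree $W(k,\min\{1,l\})$ by repeated vertex duplication, and then to pass to complements. First I would observe that $W(k,0)$ (the case $l=0$) and $W(k,1)$ (the case $l\ge1$) are both induced in $W(k,l,\delta)$ in a very controlled way: starting from $W(k,\min\{1,l\})$, each of the $k$ ``arm'' vertices $i$ (with $1\le i\le k$) carries exactly one pendant leaf, and duplicating that leaf $d_i-1$ times turns the single pendant into $d_i$ pendants, building up the arm of $W(k,l,\delta)$ with multiplicity $d_i$. Similarly, when $l\ge1$, the single pendant leaf of the root in $W(k,1)$ can be duplicated $l-1$ times to produce the $l$ leaves hanging directly off the root in $W(k,l,\delta)$. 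Duplicating a leaf $v$ always produces $\dup(G,v)$, since a leaf has a single neighbour and the new vertex $w$ is joined to that same neighbour and not to $v$. Hence, after finitely many duplications, we reach $W(k,l,\delta)$ exactly.

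Next I would take complements. Since $W(k,\min\{1,l\})$ is a tree with at least $3$ vertices (here $k\ge1$ forces $|W(k,l,\delta)|\ge 3$ except in the excluded case $(k,l)=(1,0)$, and in any case the interesting regime has enough vertices), it is not a complete graph, so the remark following Lemma~\ref{prop:dup} applies: for each duplication step $G\mapsto\dup(G,v)$ we get $q(\dup(G,v)^c)\le q(G^c)$ via the identity $\dup(G,v)^c=\jdup(G^c,v)$ from~\eqref{jdup} together with $q(\jdup(H,v))\le q(H)$. Chaining these inequalities over the whole sequence of duplications gives
\[
  q(W(k,l,\delta)^c)\le q(W(k,\min\{1,l\})^c),
\]
which is exactly the claim.

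There is essentially no obstacle here: the only point requiring a moment's care is the bookkeeping that the sequence of leaf-duplications genuinely produces $W(k,l,\delta)$ and not some other tree — one must check that duplicating a pendant leaf of arm $i$ adds a new pendant leaf to the same arm vertex, and that duplicating a root-pendant leaf adds a new root-pendant leaf, which is immediate from the definition of $\dup$. (One should also note that we never need to worry about $W(k,\min\{1,l\})$ being complete or empty, since for $k\ge1$ it contains $P_3$, hence is non-complete and non-empty, so Lemma~\ref{prop:dup} is applicable at every step.) Everything else is the formal composition of the already-established inequalities, so the lemma follows immediately as stated.
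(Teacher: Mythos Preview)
Your argument is correct and is precisely the approach the paper intends: the paper's entire proof is the single line ``Lemma~\ref{prop:dup} immediately yields,'' and your proposal unpacks exactly that, building $W(k,l,\delta)$ from $W(k,\min\{1,l\})$ by repeated leaf duplication and applying $q(\dup(G,v)^c)=q(\jdup(G^c,v))\le q(G^c)$ at each step. The parenthetical about an ``excluded case $(k,l)=(1,0)$'' is slightly off (that restriction pertains to the definition of type~2 trees, not to the lemma, and $W(1,0)=P_3$ already has three vertices), but this does not affect the argument.
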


We will determine $q(W(k,l,\delta)^c)$ by considering the cases $k=2$ and $k\ge3$ in turn.

\begin{proposition} \label{prop:W2c} 
For $l\ge0$ and  $\delta \in \mathbb N^2$, we have
\[ q(W(2,l,\delta)^c)=
  \begin{cases}
    3,& \delta_{min}=1,\\
    2,&\text{otherwise.}
  \end{cases}\]
\end{proposition}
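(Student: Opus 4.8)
The plan is to handle the two cases separately, using the reduction afforded by Lemma~\ref{lem:W}, which tells us it suffices to understand $q(W(2,0)^c)$ and $q(W(2,1)^c)$ together with the lower bounds forced by unique shortest paths.

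First I would establish the lower bound $q(W(2,l,\delta)^c)\ge3$ when $\delta_{\min}=1$. In this case $W(2,l,\delta)$ has a vertex $i\in\{1,2\}$ with exactly one appended leaf, say the pendant path $u$--$i$--$0$. I would argue that in the complement $W(2,l,\delta)^c$, the leaf $u$ of the tree has a unique shortest path of length $2$ to the other ``branch'' vertex: $u$ is adjacent in the complement to everything except $i$, while $i$ is non-adjacent in the complement to $0$ and to its appended leaves; one checks that there is exactly one shortest $u$-to-(appropriate target) path of two edges, so $d(W(2,l,\delta)^c)\ge2$ and hence $q(W(2,l,\delta)^c)\ge3$ by~\eqref{eq:Fallat}. (This is essentially the same computation as in the $S^3_{m,1}$ case of Proposition~\ref{prop:S3}, and I would phrase it by identifying $W(2,l,(1,d))^c$ concretely as a near-clique with a few edges removed plus a pendant-ish vertex.) Combined with Lemma~\ref{lem:W} and the bound $q(W(2,l,\delta)^c)\le q(W(2,1)^c)$, for the $\delta_{\min}=1$ case it then remains to produce an orthogonal matrix (or any matrix with two distinct eigenvalues) in $S(W(2,0)^c)$ and in $S(W(2,1)^c)$ showing $q(\le3)$; but since $q\ge3$ is already forced, what I actually need here is an explicit matrix with exactly three distinct eigenvalues in $S(W(2,l,(1,1))^c)=S(W(2,l)^c)$, or better, to invoke Lemma~\ref{lem:W} with the $\delta_{\min}=1$ representative $W(2,l,(1,1))$ and exhibit such a matrix for $W(2,0)^c$ and $W(2,1)^c$ directly. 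These are small graphs ($W(2,0)^c$ has $5$ vertices, $W(2,1)^c$ has $6$), so I would just write down suitable symmetric matrices with three distinct eigenvalues and the right pattern, analogous to the explicit matrices in Propositions~\ref{prop:S3} and~\ref{prop:S2}.

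For the case $\delta_{\min}\ge2$, the claim is $q(W(2,l,\delta)^c)=2$. Since $q(G^c)\ge2$ always holds once $G^c$ has an edge (and $W(2,l,\delta)$ is bipartite of order $\ge5$, so its complement has edges), it suffices to prove $q(W(2,l,\delta)^c)\le2$. The cleanest route is via Theorem~\ref{thm:bipartite}: I would check that when $\delta_{\min}\ge2$, the tree $T=W(2,l,\delta)$ does \emph{not} contain a spanning subgraph of the form $K_{m_1,n_1}\cup K_{m_2,n_2}$ with a proper bipartition. Indeed, Proposition~\ref{qc=2} already classifies exactly which trees do contain such a spanning subgraph: types 0, 1, and 2. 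A tree of the form $W(2,l,\delta)$ is of type 2 precisely when it is $K_{1,s}$ with leaves appended, i.e. when one of the two branch vertices carries all the appended leaves and the other carries none — but $W(2,l,\delta)$ has \emph{two} branch vertices $1,2$ each with $d_i\ge1$ appended leaves, so it is type 2 only in degenerate situations; and it is type 1 (two stars joined by an edge) only when $\delta_{\min}=1$. So for $\delta_{\min}\ge2$ the tree is none of types 0,1,2, hence by Proposition~\ref{qc=2} it contains no such spanning $K_{m_1,n_1}\cup K_{m_2,n_2}$, and Theorem~\ref{thm:bipartite} gives $q(W(2,l,\delta)^c)=2$ immediately.

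The main obstacle I anticipate is the bookkeeping in the second paragraph: correctly identifying the unique shortest path in $W(2,l,(1,d))^c$ and verifying it is genuinely unique (ruling out alternative two-edge paths through the many mutually adjacent ``clique'' vertices of the complement), and making sure the $\delta_{\min}=1$ upper bound of $3$ is actually attained rather than dropping to $2$ — i.e. exhibiting concrete matrices with exactly three distinct eigenvalues for the two base graphs $W(2,0)^c$ and $W(2,1)^c$. By contrast, the $\delta_{\min}\ge2$ case should be essentially a one-line appeal to Proposition~\ref{qc=2} and Theorem~\ref{thm:bipartite} once the type classification is read off correctly. I would also double-check the small identifications $W(2,0)=W(2,0,(1,1))$ and its complement against the enumerated-graph tables, as the authors do elsewhere, to cross-validate the explicit matrices.
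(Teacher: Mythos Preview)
Your approach for the case $\delta_{\min}=1$ is essentially the same as the paper's: a unique shortest path in the complement gives the lower bound~$3$, and Lemma~\ref{lem:W} reduces the upper bound to the two base graphs $W(2,0)^c=P_5^c$ and $W(2,1)^c$ (which the paper handles by citing Proposition~\ref{prop:S3} and the identification $W(2,1)^c=G184$ rather than writing down new matrices).

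However, your argument for $\delta_{\min}\ge2$ has a genuine gap. You want to invoke Theorem~\ref{thm:bipartite} by arguing that $W(2,l,\delta)$ with $\delta_{\min}\ge2$ is not of type~0, 1, or~2 in the sense of Proposition~\ref{qc=2}. This is false: \emph{every} $W(k,l,\delta)$ is a type~2 tree. The paper says so explicitly at the start of the type~2 subsection (``Type~2 exceptional trees are precisely the rooted trees~$T$ with depth~2 which are not equal to $K_{1,|T|-1}$''). Concretely, for $W(2,1,(2,2))$ with the natural $2$-colouring (root and the four depth-$2$ leaves black; the three depth-$1$ vertices white), the spanning subgraph $K_{1,3}\cup K_{4,0}$ witnesses the forbidden configuration in Theorem~\ref{thm:bipartite}, so that theorem simply does not apply. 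Your reading of ``$K_{1,s}$ with~$t$ vertices appended as leaves'' as meaning that only \emph{one} leaf of the star receives appendages is too restrictive; similarly, your claim that $W(2,l,\delta)$ is type~1 only when $\delta_{\min}=1$ is wrong, since $W(2,0,(d_1,d_2))=S^3_{d_1,d_2}$ is type~1 for all $d_1,d_2\ge1$.

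The paper's route for $\delta_{\min}\ge2$ is therefore necessarily different: the case $l=0$ reduces to $S^3_{d_1,d_2}$ and is already covered by Proposition~\ref{prop:S3}, while for $l\ge1$ the paper exhibits an explicit orthogonal matrix in $S(W(2,1,(2,2))^c)$ and then appeals to leaf duplication (Lemma~\ref{prop:dup}) to propagate $q=2$ to all larger $l$ and $\delta$. You will need an explicit construction of this sort; there is no soft argument via Theorem~\ref{thm:bipartite} available here.
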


\begin{proof}
We have $W(2,0,(d_1,d_2))=S^3_{d_1,d_2}$, so the statement for $l=0$ was established in Proposition~\ref{prop:S3}.
Moreover, for any $l\ge0$ and $d_2\ge1$, the graph $W(2,l,(1,d_2))^c$ contains a unique shortest path on $2$ edges, from vertex~$0$ to vertex~$2$ (via vertex $3+l$), and $W(2,1)^c=G184$ has $q=3$ by~\cite[Corollary 6.2]{2017arXiv170801821B}.
Hence \[3\leq q(W(2,l,(1,d_2))^c)\leq q(W(2,\min\{1,l\})^c)=3\]
by   \eqref{eq:Fallat}  and Lemma~\ref{lem:W}, so $q(W(2,l,(1,d_2))^c)=3$ for $d_2\geq 1$ and $l\ge0$. On the other hand, the following orthogonal matrix shows that $q(W(2,1,(2,2))^c)=2$, where $\alpha_{\pm}=\tfrac12(3\pm\sqrt6)$:
\[
\frac16
\left(
\begin{array}{cccccccc}
 0  & -3        & 3         & -3        & 3         & 0                        & 0                        & 0          \\
 -3 & \alpha_+  & -\alpha_- & -\alpha_+ & \alpha_-  & \sqrt{6}                 & 0                        & \sqrt{6}   \\
 3  & -\alpha_- & \alpha_+  & \alpha_-  & -\alpha_+ & \sqrt{6}                 & 0                        & \sqrt{6}   \\
 -3 & -\alpha_+ & \alpha_-  & \alpha_+  & -\alpha_- & 0                        & \sqrt{6}                 & \sqrt{6}   \\
 3  & \alpha_-  & -\alpha_+ & -\alpha_- & \alpha_+  & 0                        & \sqrt{6}                 & \sqrt{6}   \\
 0  & \sqrt{6}  & \sqrt{6}  & 0         & 0         & -2-\sqrt{6}              & -2+\sqrt{6}              & 2          \\
 0  & 0         & 0         & \sqrt{6}  & \sqrt{6}  & -2+\sqrt{6}              & -2-\sqrt{6}              & 2          \\
 0  & \sqrt{6}  & \sqrt{6}  & \sqrt{6}  & \sqrt{6}  & 2                        & 2                        & -2
\end{array}
\right).\]
By Lemma~\ref{lem:W}, $q(W(2,l,(d_1,d_2))^c)=2$ for $d_1,d_2\geq 2$.
\end{proof}

\begin{proposition}\label{prop:W3c}
  $q(W(k,l,\delta)^c)=2$ for any $k \geq 3$, $l\ge0$, and $\delta \in \mathbb N^k$.
\end{proposition}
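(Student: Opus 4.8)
By Lemma~\ref{lem:W}, it suffices to treat the two ``minimal'' cases $W(k,0)$ and $W(k,1)$ for each $k\ge3$; that is, I must produce an orthogonal matrix in $S(W(k,0)^c)$ and one in $S(W(k,1)^c)$, since then $q(W(k,l,\delta)^c)\le q(W(k,\min\{1,l\})^c)\le 2$, and $q(W(k,l,\delta)^c)\ge2$ because the complement of such a tree (with $k\ge 3\ge 2$ and $|T|\ge7$) certainly has an edge. The cleanest route is to invoke Theorem~\ref{thm:bipartite}: $W(k,l,\delta)$ is a tree, hence bipartite, and Proposition~\ref{qc=2} already classifies exactly which trees fail the hypothesis of Theorem~\ref{thm:bipartite} --- namely the type~0, type~1 and type~2 trees. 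The trees $W(k,l,\delta)$ with $k\ge2$ are exactly the type~2 trees (other than stars), so \emph{a priori} Theorem~\ref{thm:bipartite} does not apply directly. So the plan is instead to check, by hand, that for $k\ge3$ the relevant $2$-colouring of $W(k,l,\delta)$ does admit a choice of colour classes for which no spanning subgraph of the form $K_{m_1,n_1}\cup K_{m_2,n_2}$ exists, \emph{or} to construct the orthogonal matrices explicitly.

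The first thing I would do is pin down the $2$-colouring: colour the root $0$ and all the leaves-at-distance-$2$ (the children of vertices $1,\dots,k$) one colour, say black, and colour $1,\dots,k$ together with the appended leaves $k+1,\dots,k+l$ white. With $k\ge 3$, I claim there is no spanning $K_{m_1,n_1}\cup K_{m_2,n_2}$: in any such spanning subgraph with, say, $m_1 m_2\ne0$, both black parts must be nonempty, and since the only black vertex adjacent to any white vertex other than $0$-or-the-path-vertices is forced by the tree structure, one quickly sees the components of $K_{m_1,n_1}\cup K_{m_2,n_2}$ cannot cover all edges of $W(k,l,\delta)$ while remaining spanning; the obstruction is that $0$ is adjacent (in the tree) only to $1,\dots,k+l$, so $0$ lies in one component and then that component must contain all of $1,\dots,k$, forcing $k$ to lie in one part, and then the pendant paths of length $2$ below $1,\dots,k$ cannot all be accommodated. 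Making this combinatorial dichotomy precise for $k\ge 3$ (and seeing exactly where it fails for $k=2$, consistent with Proposition~\ref{prop:W2c}) is the crux. Once that is done, Theorem~\ref{thm:bipartite} gives $q(W(k,l,\delta)^c)=2$ in one stroke, for \emph{all} $l\ge0$ and all $\delta$, with no need for Lemma~\ref{lem:W} or explicit matrices.

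The alternative, more computational route --- in case the clean argument above has a gap --- is to mimic Propositions~\ref{prop:S4}, \ref{prop:W2c}: exhibit an orthogonal $X=\bigl(\begin{smallmatrix}A&B\\B&-A\end{smallmatrix}\bigr)$ (or its $\widehat M$ analogue from~\eqref{eq:N} when the relevant bipartite part of $W(k,l)^c$ is not balanced) lying in $S(W(k,0)^c)$ and in $S(W(k,1)^c)$. Here one takes $B$ nonnegative with zero pattern dictated by the edges of $W(k,l)$, uses the symmetry $c=(1,\dots,1)^T$ in the $k$-dimensional ``clique'' part, and checks via Corollary~\ref{cor:sqrt} and Lemma~\ref{lem:reducible} that $BB^T$ and $B^TB$ are irreducible for $k\ge3$ --- this is where $k\ge3$ enters, since for $k=2$ reducibility creeps in exactly as Lemma~\ref{lem:reducible} predicts. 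I expect the main obstacle to be the combinatorial step: verifying cleanly that no spanning $K_{m_1,n_1}\cup K_{m_2,n_2}$ sits inside $W(k,l,\delta)$ for $k\ge 3$, handling the $l=0$ and $l>0$ cases and both choices of which pair $(m_1m_2\ne0$ or $n_1n_2\ne0)$ is nonzero, while keeping track of the fact that the pendant leaves below each vertex $i\le k$ must all land in the same component as $i$.
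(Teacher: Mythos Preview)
Your primary route --- showing that $W(k,l,\delta)$ for $k\ge3$ avoids every spanning $K_{m_1,n_1}\cup K_{m_2,n_2}$ and then invoking Theorem~\ref{thm:bipartite} --- does not work, because the claim is false. With the (unique) $2$-colouring you describe, take $m_1=1$ (the root~$0$), $n_1=k+l$ (all of $1,\dots,k+l$), $m_2=\sum_i d_i$ (the distance-two leaves), and $n_2=0$. Then $m_1m_2\ne0$, and $K_{1,k+l}\cup K_{m_2,0}$ is a spanning subgraph of $W(k,l,\delta)$: the star $K_{1,k+l}$ is exactly the set of edges at the root, and $K_{m_2,0}$ contributes no edges. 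This is precisely why Proposition~\ref{qc=2} puts \emph{every} $W(k,l,\delta)$ with $k\ge2$ into type~2, independent of~$k$; there is no threshold at $k=3$ in that classification. Your sketch implicitly assumes both pieces of the decomposition must carry edges, but the theorem allows $n_2=0$.

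Your fallback route is the right one and is what the paper does: reduce via Lemma~\ref{lem:W} to $W(k,l)$ with $l\in\{0,1\}$ and use the $\Mhat{\alpha}{v}$ construction of~\eqref{eq:N}. The point you have not yet confronted is that the root vertex~$0$ forces an extra row/column of zeros in the off-diagonal block, which is why $\Mhat{\alpha}{v}$ (not $\M{\alpha}$) is needed; and this in turn requires a unit vector $v$ with $B^Tv=0$ and no zero entries. If $B$ is the $k\times(k+l)$ all-ones-off-the-diagonal matrix, then $B^Tc\ne0$ for $c=(1,\dots,1)^T$, so the symmetric choice you suggest does not work. The paper resolves this by breaking the symmetry: set $b_{k,1}=2-k$ (and tweak the last column when $l=1$), then $v=(2-k,1,\dots,1)^T$ lies in the left kernel, and one checks directly that $BB^T$ and $B^TB$ have no zero entries (hence are generalised irreducible). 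The condition $k\ge3$ is what makes $2-k\ne0$, so $v$ has no zero entries. This is where $k\ge3$ genuinely enters, not in any reducibility obstruction of the kind you anticipated.
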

\begin{proof}
 By Lemma~\ref{lem:W}, it suffices to establish this for $\delta=(1,1,\ldots, 1)$ and $l\in \{0,1\}$, i.e., for the graph $G=W(k,l)$ where $l\in \{0,1\}$.
 Let us write $n=|G|=2k+l+1$ for the number of vertices in~$G$.

 Let $k+1$ and $n-k-1$ be the
  numbers of black and white vertices, respectively, in the
  $2$-colouring of the graph $G$ for which the root vertex is
  coloured black. Relabel the vertices of~$G$ as $0,\dots,n$, with
  root vertex~$0$ and black vertices $0,\dots,k$. 
  
We are looking for a matrix~$B$ and a vector $v$ satisfying the hypotheses of Corollary~\ref{cor:sqrt}, so that the matrix $\Mhat{\alpha}{v}$ defined in~\eqref{eq:N} lies in $S(W(k,l)^c)$ for some $\alpha\in[-1,1]$. We need $B$ to be a $k\times (k+l)$ matrix with zeros precisely on the diagonal and for which $BB^T$ and $B^TB$ are both generalised irreducible; furthermore we require $v\in \R^k$ with no zero entries to satisfy $B^Tv=0$.

Such matrices are easy to find: consider, for example, the $k\times (k+l)$ matrix~$B=[b_{i,j}]$ with zeros on the diagonal and $1$s in every off-diagonal entry except $b_{k,1}=2-k$, and, in the case $l=1$, with
$b_{k-1,k+1}=\tfrac23$ and $b_{k,k+1}=\tfrac13$. Examples for $k=4$ and $l=0$, and $k=4$ and $l=1$ are given below: 
$$\left(
\begin{array}{cccc}
 0 & 1 & 1 & 1 \\
 1 & 0 & 1 & 1 \\
 1 & 1 & 0 & 1 \\
 -2 & 1 & 1 & 0 \\
\end{array}
\right), \quad\left(
\begin{array}{ccccc}
 0 & 1 & 1 & 1 & 1 \\
 1 & 0 & 1 & 1 & 1 \\
 1 & 1 & 0 & 1 & \frac{2}{3} \\
 -2 & 1 & 1 & 0 & \frac{1}{3} \\
\end{array}
\right).$$
Then $B^Tv=0$ for $v=(2-k, 1, \ldots , 1)^T \in \R^k$, and the matrices $BB^T$ and $B^TB$ both have no zero entries and so, trivially, are generalised irreducible.  After rescaling to make the spectral norm of $B$ less than one, the matrices satisfy the conditions of Corollary~\ref{cor:sqrt}, so $q(G^c)=2$.
\end{proof}

\section{Complements of graphs with \texorpdfstring{$q(G)\ge|G|-1$}{q(G)≥|G|-1}}\label{q=n-1}

Since Conjecture~\ref{conjecture} posits an upper bound on $q(G)+q(G^c)$, it is natural to consider the case when $q(G)$ (or $q(G^c)$) alone is large as one might hope to find a counterexample with this property. In this section, we consider graphs with $q(G)\geq |G|-1$. Such graphs have been characterised in~\cite{MR3665573}, and here we compute $q$ for their complements. This will allow us to show in the following section that they do in fact satisfy the conjecture.

\label{sect:n-1}
\begin{theorem}[{\cite[{Theorem 4.14}]{MR3665573}}]\label{thm:fallat}
A graph $G$ has $q(G)\geq |G| -1$ if and only if $G$ is one of the following: 
\begin{enumerate}
\item a path,
\item the disjoint union of a path and an isolated vertex,
\item a path with one leaf attached to an interior vertex,
\item a path with an extra edge joining two vertices at distance 2.
\end{enumerate}
\end{theorem}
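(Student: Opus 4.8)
The plan is to prove the two implications of the ``if and only if'' separately: the ``if'' direction by explicit constructions, and the ``only if'' direction by a structural case analysis.

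\textit{Each of the four graphs in the list has $q(G)\ge|G|-1$.} I would begin with the elementary remark that a graph which is not a path has a matrix of rank at most $|G|-2$ in its class (since $\mr(G)\le|G|-1$, with equality only for $G=P_{|G|}$); such a matrix has $0$ as an eigenvalue of multiplicity at least~$2$, so $q(G)\le|G|-1$ whenever $G$ is not a path. It therefore remains only to prove the reverse inequality for families (2)--(4). For~(4), the unique geodesic joining the two endpoints of the underlying path uses the additional edge and so omits exactly one vertex; hence $d(G)=|G|-2$, and~\eqref{eq:Fallat} gives $q(G)\ge|G|-1$. For~(3), the diametral path of the tree passes through all vertices except the appended leaf, so again $d(G)=|G|-2$. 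For~(2), writing $G=P_{|G|-1}\sqcup K_1$, every matrix in $S(G)$ is the direct sum of an $(|G|-1)\times(|G|-1)$ Jacobi matrix (which necessarily has $|G|-1$ distinct eigenvalues) with a scalar, so $q(G)\ge|G|-1$, with equality attained by choosing the scalar to coincide with an eigenvalue. Family~(1) is immediate from $q(P_n)=n$.

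\textit{No graph outside the list has $q(G)\ge|G|-1$.} Suppose $q(G)\ge|G|-1$; if $q(G)=|G|$ then $G=P_{|G|}$ is in family~(1), so assume $q(G)=|G|-1=:n-1$. Two consequences are immediate. If some matrix in $S(G)$ had an eigenvalue of multiplicity at least~$3$ it would have at most $n-2$ distinct eigenvalues, so every matrix in $S(G)$ has maximum multiplicity at most~$2$, i.e.\ $\mr(G)=n-2$. And by~\eqref{eq:cyclebound}, if $c(G)\ge4$ then $q(G)\le n-2$, so every block of~$G$ is an edge or a triangle. If $G$ is disconnected then a short direct argument shows $q(G)\le n-2$ unless $G=P_{n-1}\sqcup K_1$, which is family~(2): if two components have order at least~$2$ one overlaps their spectra to create two repeated eigenvalues, and if there are two isolated vertices, or an isolated vertex together with a non-path component, one argues similarly. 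So from now on $G$ is connected and not a path.

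It remains to analyse the connected graphs $G$ with $\mr(G)=n-2$ whose blocks are all edges or triangles. I would invoke, or re-derive using the cut-vertex reduction for minimum rank together with this block restriction, the classification of such graphs: up to the obvious parameters they are the generalised stars with exactly three legs, together with the graphs formed from a triangle by attaching pendant paths to its vertices (with at most two pendant paths meeting at any one vertex, since a third one raises the maximum multiplicity). For each such~$G$ one computes $q(G)$ exactly: the bound $q(G)\ge d(G)+1$ from~\eqref{eq:Fallat} applied to the longest geodesic, and a matching upper bound from an explicit symmetric matrix --- placing Jacobi blocks along the legs or pendant paths and exploiting a reflection or permutation symmetry among equal-length arms, exactly as in the matrices $X$ and $Y$ in the proof of Proposition~\ref{prop:qW}, or, for a generalised star, the known value $q=(\text{sum of the two longest legs})+1$. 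One finds $q(G)=d(G)+1$ throughout this class, so $q(G)=n-1$ forces $d(G)=n-2$: the longest unique shortest path of~$G$ misses a single vertex~$w$. Since a path cannot be extended at an endpoint, $w$ must be joined by an edge to an interior vertex of that path (if $G$ is a tree, yielding family~(3)) or must lie on a triangle built on two consecutive vertices of the path (otherwise, yielding family~(4)).

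The main obstacle is the final paragraph. One needs the classification of connected graphs with $\mr(G)=n-2$ and only edge/triangle blocks, and then a finite list of orthogonal- or Jacobi-matrix constructions certifying $q(G)=d(G)+1<n-1$ for every non-extremal member --- generalised stars with all legs of length at least~$2$, triangles with three nontrivial pendant paths, triangles carrying two pendant paths at a common vertex, and so on. Each individual construction is a routine eigenvalue computation in the spirit of those already in Section~\ref{sec:trees}; the real work lies in organising the case analysis and in getting the multiplicity and circumference counts in the ``only if'' reductions exactly right.
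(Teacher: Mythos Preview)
The paper does not prove Theorem~\ref{thm:fallat}; it is quoted verbatim from \cite[Theorem~4.14]{MR3665573} and used as a black box throughout Section~\ref{q=n-1}. So there is no ``paper's own proof'' to compare your proposal against.

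As for the proposal itself: your ``if'' direction is clean and correct. The ``only if'' direction has the right skeleton --- the reduction to $M(G)\le 2$ and $c(G)\le 3$ via~\eqref{eq:cyclebound}, the disconnected analysis, and then a structural classification followed by explicit $q$-computations --- and this is indeed roughly how the cited paper proceeds. You are right to flag the final paragraph as the real obstacle. One caution: your claimed classification (``generalised stars with exactly three legs, together with a triangle with pendant paths'') is not quite the full list of connected graphs with $M(G)=2$ and only edge/triangle blocks; for instance, a triangle with two pendant paths attached at the \emph{same} vertex, or a triangle with a longer tree hanging off one vertex, must be handled, and ruling out more exotic block-cut structures (e.g.\ a path whose interior carries both a pendant leaf and, elsewhere, a triangle) requires a careful maximum-multiplicity argument rather than an appeal to intuition. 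The original proof in~\cite{MR3665573} organises this via an inductive vertex-deletion argument together with prior results on ordered multiplicity lists, which sidesteps having to enumerate the $M=2$ class explicitly; your route is viable but would need that enumeration done rigorously.
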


Since the graphs in items 1 and 3 are trees, the values of $q$ for their complements are covered in Section~\ref{sec:trees}.

Let $R_{n}$ denote the complement of $P_{n-1}\cup \{n\}$, a typical graph listed in item 2.
\begin{proposition}\label{prop:Rn}
  We have
$$q(R_n)= \begin{cases}
    2,& n=2  \text{ or }  n\geq 6,\\
    3,& 3 \leq n \leq 5.
  \end{cases}
  $$
  In particular, the graphs $R_n$ and $R_n^c$ satisfy
  inequality~\eqref{eq:conjecture} for all $n\in\mathbb N$.
\end{proposition}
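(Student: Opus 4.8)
The graph $R_n = (P_{n-1}\cup\{n\})^c$ has a concrete description: vertex $n$ is adjacent to all of $1,\dots,n-1$ (since $n$ is isolated in $P_{n-1}\cup\{n\}$), while the induced subgraph on $\{1,\dots,n-1\}$ is $P_{n-1}^c$. So $R_n = P_{n-1}^c \vee K_1$. I would first dispose of the small cases $n=2,\dots,5$ by hand: $R_2=K_2$ has $q=2$; for $n=3,4,5$ one checks that $R_n$ is connected with a unique shortest path of length~$2$ between some pair of vertices, so $q(R_n)\ge 3$ by~\eqref{eq:Fallat}, and then exhibits an explicit matrix (or invokes~\eqref{eq:cyclebound} together with a small cycle, or cites the enumerated-graph computations of~\cite{2017arXiv170801821B}) to get the matching upper bound $q(R_n)\le 3$. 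I expect $R_3=P_3$ to give $q=3$ immediately, and $R_4$, $R_5$ to be handled by finding a $4$-cycle or $5$-cycle subgraph so that~\eqref{eq:cyclebound} yields $q(R_n)\le n-2=3$.

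The substantive case is $n\ge 6$, where I want to show $q(R_n)=2$, i.e.\ produce an orthogonal matrix in $S(R_n)$. Here I would lean on the bipartite machinery of Section~\ref{contructions}. Note $P_{n-1}\cup\{n\}$ is bipartite; apply Theorem~\ref{thm:bipartite} with a suitable $2$-colouring. The isolated vertex $n$ can be assigned either colour, and the path $P_{n-1}$ properly $2$-colours as $m$ black and $n-1-m$ white with $|m-(n-1-m)|\le 1$. The hypothesis to verify is that $P_{n-1}\cup\{n\}$ does \emph{not} contain a spanning subgraph of the form $K_{m_1,n_1}\cup K_{m_2,n_2}$ with a proper split of one of the colour classes. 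Since every component of $P_{n-1}\cup\{n\}$ has at most one edge incident to any vertex from a star's centre only in trivial cases, and $K_{a,b}$ with $\min(a,b)\ge 2$ contains a $4$-cycle while $P_{n-1}$ is acyclic, each $K_{m_i,n_i}$ must in fact be a star $K_{1,*}$ or an edgeless graph or a single vertex; one then argues that a spanning union of two such pieces forces $P_{n-1}\cup\{n\}$ to be a disjoint union of (at most) two stars plus isolated points, which for $n-1\ge 5$ the path $P_{n-1}$ is not. This is the combinatorial heart of the argument and is essentially a rerun of Proposition~\ref{qc=2}: for $n\ge 6$ we have $|P_{n-1}|\ge 5$, and a path on $\ge 5$ vertices is neither $K_{1,t}$ nor of type~1 or type~2, and the extra isolated vertex only adds a trivial $K_{1,0}$ block, which cannot repair this. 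Hence Theorem~\ref{thm:bipartite} applies and $q(R_n)=q((P_{n-1}\cup\{n\})^c)=2$.

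Finally, the ``in particular'' clause: combining $q(R_n)$ with the known value $q(R_n^c)=q(P_{n-1}\cup\{n\})$. Since $q$ of a disjoint union with an isolated vertex is handled by Theorem~\ref{thm:fallat}, $R_n^c = P_{n-1}\cup\{n\}$ has $q(R_n^c)=(n-1)-1+1 = n-1$ for $n\ge 3$ (the isolated vertex contributes one more distinct eigenvalue, forcing $q(P_{n-1}\cup\{n\})=q(P_{n-1})+[\text{the new eigenvalue already present or not}]$; more carefully, $q(P_{n-1}\cup\{n\})\in\{n-2,n-1\}$, and one checks it equals $n-1$), and for $n=2$, $R_2^c=P_1\cup\{2\}=\overline{K_2}$ has $q=1$. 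Then $q(R_n)+q(R_n^c)$ is at most $2+(n-1)=n+1\le |R_n|+2$ for $n\ge 6$, and for $n\le 5$ one checks the small sum directly against $|R_n|+2 = n+2$. The main obstacle is the combinatorial verification that $P_{n-1}\cup\{n\}$ has no such spanning $K_{m_1,n_1}\cup K_{m_2,n_2}$; everything else is either a short citation or a finite check.
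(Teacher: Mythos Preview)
Your overall strategy matches the paper's: handle $n\le 5$ by hand and apply Theorem~\ref{thm:bipartite} to $R_n^c=P_{n-1}\cup\{n\}$ for $n\ge 6$. Two points need correction, however.

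First, a small slip: $R_4$ has no $4$-cycle (vertex~$2$ has degree~$1$), so the cycle bound with a $4$-cycle cannot give $q(R_4)\le 3$. A $3$-cycle does the job, or simply observe that $R_4$ is not a path so Theorem~\ref{thm:fallat} gives $q(R_4)\le 3$ directly. (The paper instead writes $R_4=\jdup(P_3,v)$ and uses Lemma~\ref{prop:dup}.)

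Second, and more substantively, your verification of the hypothesis of Theorem~\ref{thm:bipartite} for $n\ge6$ has a gap. The assertion that a spanning subgraph of the form $K_{m_1,n_1}\cup K_{m_2,n_2}$ ``forces $P_{n-1}\cup\{n\}$ to be a disjoint union of (at most) two stars plus isolated points'' is not correct as stated: containing a spanning subgraph does not by itself constrain the ambient graph. In fact, the choice of colour for the isolated vertex~$n$ matters. For $n=6$, colouring $P_5$ with black vertices $\{1,3,5\}$ and white $\{2,4\}$, if you put $n=6$ in the \emph{black} class then $K_{3,0}\cup K_{1,2}$ (black parts $\{1,5,6\}$ and $\{3\}$, white part $\{2,4\}$) \emph{is} a spanning subgraph with $m_1m_2\ne 0$, and Theorem~\ref{thm:bipartite} does not apply. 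The fix is to place $n$ in the \emph{smaller} colour class of $P_{n-1}$ (either class when they are equal). Then the colour class not containing~$n$ lies entirely in~$P_{n-1}$ and has size at least~$3$ for $n\ge 6$. Now argue directly: the isolated vertex~$n$ forces its block to have no vertices of the opposite colour, so the other block must be a complete bipartite graph containing all vertices of that opposite colour; being acyclic and sitting inside $P_{n-1}$, its centre has degree at most~$2$, contradicting the fact that the opposite colour class has at least three vertices. The paper glosses over this as ``easy to check'', but your sketch as written does not quite get there.
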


\begin{proof}
  For $n=2,3$ we have $R_n=P_{n}$, so $q(R_2)=2$ and $q(R_3)=3$.

  Since $R_4=\jdup(P_3,v)$ where $v$ is one of the terminal vertices of $P_3$, we have $q(R_4)\le q(P_3)=3$ by 
  Lemma~\ref{prop:dup}. Moreover, $R_4$ contains a unique shortest path %
  of length $2$, %
  so $q(R_4)=3$ by \eqref{eq:Fallat}.

  We have $R_5=P_4\vee \{5\}$, and it is easy to see that the terminal vertices of the copy of $P_4$ are degree $2$ in $R_5$ and have one neighbour in common, so $q(R_5)>2$ by~\cite[Corollary~4.5]{MR3118943}. By Theorem~\ref{thm:fallat}, $q(R_5)<4$, so $q(R_5)=3$. 
  For $n\ge6$, it is easy to check that
  $R_n^c$ satisfies the hypotheses of Theorem~\ref{thm:bipartite}, so $q(R_n)=2$. Since $q(R_n^c)=n-1$ by Theorem~\ref{thm:fallat}, inequality~\eqref{eq:conjecture} follows.
\end{proof}

Now consider a typical graph under item~4, namely the graph
$T_{m,n}$ on $m+n+1$ vertices consisting of a $K_3$ joined to a $P_m$ and a $P_n$ at two different vertices (terminal vertices of the paths), where $m\geq n\ge1$. Equivalently, $T_{m,n}$ is a $P_{m+n+1}$ with the extra edge $\{m,m+2\}$ added.

 \begin{center}
 \begin{tabular}{ccc}
	\begin{tikzpicture}[style=thick, scale=0.7]
		\draw \foreach \x in {0,1,...,5} {
				(\x,0) -- (\x + 1,0) };
		\draw (3,0)--(3.5,1)--(4,0);
		\draw[fill=white] \foreach \x in {0,1,...,6} {
		                    (\x,0) circle (1mm)
		};		
		\draw[fill=white] (3.5,1) circle (1mm);
		\end{tikzpicture}
&\qquad&	\	\begin{tikzpicture}[style=thick, scale=0.7]
		\draw \foreach \x in {0,1,...,4} {
				(\x,0) -- (\x + 1,0) };
		\draw (0,0)--(0.5,1)--(1,0);
		\draw[fill=white] \foreach \x in {0,1,...,5} {
		                    (\x,0) circle (1mm)
		};		
		\draw[fill=white] (0.5,1) circle (1mm);
		\end{tikzpicture}
\\
$T_{4,3}$ && $T_{1,5}$\\
	\end{tabular}
\end{center}
Note that $T_{2,2}$ is one of the four graphs singled out in Figure~\ref{fig:exceptions}.

\begin{lemma}\label{lem:edgedeletion}
  If $e$ is an edge in a graph~$H$ and $G=H\setminus\{e\}$, then
  $q_M(G^c)\leq q_M(H^c)$ and $q_S(G^c)\leq q_S(H^c)$.
\end{lemma}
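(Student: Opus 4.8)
If $e$ is an edge in a graph $H$ and $G = H \setminus \{e\}$, then $q_M(G^c) \le q_M(H^c)$ and $q_S(G^c) \le q_S(H^c)$.

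\begin{proof}[Proof plan]
The plan is to reduce this immediately to Theorem~\ref{thm:smp}. The crucial observation is that passing to complements turns edge deletion into edge addition: if $G = H\setminus\{e\}$, then $G^c$ is obtained from $H^c$ by \emph{adding} the edge~$e$, so $H^c$ is a \emph{spanning subgraph} of~$G^c$ (same vertex set, $E(H^c)\subseteq E(G^c)$).

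Given this, I would argue as follows for the SMP statement (the SSP statement is identical, replacing ``SMP'' by ``SSP'' throughout). Choose $\tilde A\in S(H^c)$ with the SMP and $q(\tilde A)=q_M(H^c)$, which exists by definition of~$q_M$. Since $H^c$ is a spanning subgraph of~$G^c$, Theorem~\ref{thm:smp} applied with $\tilde G = H^c$ and $G$ there equal to~$G^c$ here yields a matrix $A\in S(G^c)$ with the SMP and $\sigma(A)=\sigma(\tilde A)$; in particular $q(A)=q(\tilde A)=q_M(H^c)$. Hence $q_M(G^c)\le q(A)=q_M(H^c)$, as required.

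There is no real obstacle here: the entire content is the one-line combinatorial remark that complementation swaps edge deletion and edge addition, after which Theorem~\ref{thm:smp} does all the work. The only point to be careful about is that Theorem~\ref{thm:smp} genuinely requires a \emph{spanning} subgraph (which is exactly what we have, since complementation preserves the vertex set), so that the conclusion lies in $S(G^c)$ and not merely in $S$ of some larger graph.
\end{proof}
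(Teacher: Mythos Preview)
Your proof is correct and matches the paper's own argument essentially verbatim: the paper simply notes that $H^c=G^c\setminus\{e\}$ is a spanning subgraph of $G^c$ and invokes Theorem~\ref{thm:smp}.
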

\begin{proof}
  Since $H^c=G^c\setminus\{e\}$ is a spanning subgraph of $G^c$, this is immediate by
  Theorem~\ref{thm:smp}. 
\end{proof}

\begin{proposition}\label{lem:m+n=6}
  If $m,n\ge1$ with $m+n=6$, then $q(T_{m,n}^c)=q_S(T_{m,n}^c)=2$.
\end{proposition}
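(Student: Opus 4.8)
The plan is to find, for each pair $(m,n)$ with $m+n=6$, an explicit orthogonal matrix lying in $S(T_{m,n}^c)$ whose construction is robust enough to also carry the Strong Spectral Property. Since $m\ge n\ge1$ and $m+n=6$, the possibilities up to the obvious symmetry are $(5,1)$, $(4,2)$ and $(3,3)$, so there are only three graphs on $7$ vertices to handle. For each of these, $T_{m,n}^c$ has an edge, so $q(T_{m,n}^c)\ge2$ and $q_S(T_{m,n}^c)\ge2$ trivially; the content is the upper bound $q_S(T_{m,n}^c)\le2$.

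The natural tool is the $M(B,\alpha)$-construction from Section~\ref{contructions}: if $T_{m,n}$ were bipartite with a $2$-colouring whose induced bipartite structure matched the hypotheses of Theorem~\ref{thm:bipartite}, we would be done immediately. Indeed $T_{m,n}$ contains a triangle $K_3$, so it is \emph{not} bipartite, and Theorem~\ref{thm:bipartite} does not apply directly. However, $T_{m,n}$ differs from a bipartite graph by a single edge: deleting the edge $\{m,m+2\}$ (one of the three edges of the $K_3$) yields $P_{m+n+1}=P_7$, which is bipartite. So the approach I would take is: first verify that $P_7^c$ satisfies $q_S(P_7^c)=2$ — this should follow from Theorem~\ref{thm:bipartite} (one checks that the path $P_7$, with its unique $2$-colouring into $4$ and $3$ vertices, does not contain a spanning subgraph of the forbidden form $K_{m_1,n_1}\cup K_{m_2,n_2}$, which is clear since such a subgraph would force a vertex of $P_7$ to have degree at least $3$), giving an orthogonal $M=M(B,1)\in S(P_7^c)$, which automatically has the SSP among matrices with two distinct eigenvalues (here SMP and SSP coincide). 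Then I would invoke Lemma~\ref{lem:edgedeletion}: writing $H=P_7$ and $G=T_{m,n}$, we have... wait — this goes the wrong way, since $T_{m,n}$ is obtained from $P_7$ by \emph{adding} an edge, i.e.\ $P_7=T_{m,n}\setminus\{e\}$. So in the notation of Lemma~\ref{lem:edgedeletion} we take $H=T_{m,n}$ and $G=P_7$, which gives $q_S(P_7^c)\le q_S(T_{m,n}^c)$ — the wrong direction again.

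The correct direction comes from Theorem~\ref{thm:smp} applied to complements: $T_{m,n}^c$ is a spanning subgraph of $P_7^c$ (removing the edge $e=\{m,m+2\}$ from $P_7$ corresponds to adding it to the complement, so $T_{m,n}^c = P_7^c\setminus\{e'\}$ where $e'$ is the edge of $P_7^c$ corresponding to the non-edge of $P_7$... no). Let me reconsider: $T_{m,n}=P_7+e$ so $E(T_{m,n})\supsetneq E(P_7)$, hence $E(T_{m,n}^c)\subsetneq E(P_7^c)$, so $T_{m,n}^c$ is a spanning subgraph of $P_7^c$. This is the wrong inclusion for applying Theorem~\ref{thm:smp}, which builds SSP matrices on \emph{larger} graphs. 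Therefore the clean functorial argument is not available, and the honest route is direct: for each of the three graphs exhibit an explicit $7\times7$ orthogonal matrix in $S(T_{m,n}^c)$ and verify it has the SSP. The matrices should look much like those appearing in the proofs of Propositions~\ref{prop:S3}, \ref{prop:W2c} — namely of block form $\bigl(\begin{smallmatrix}A&B\\B&-A\end{smallmatrix}\bigr)$ or the $\widehat M$-variant, scaled to be orthogonal — and the SSP verification reduces to checking that the only symmetric $X$ with $X\circ(M+I)=0$ and $[M,X]=0$ is $X=0$, a finite linear-algebra computation that can be done by hand or by machine for each fixed matrix.

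Thus the main obstacle is simply producing the three explicit orthogonal matrices with the right zero pattern and confirming the SSP for each; there is no conceptual difficulty, only computation. I would organise the proof as: (1) note $q_S(T_{m,n}^c)\ge q(T_{m,n}^c)\ge2$ since $T_{m,n}^c$ has an edge; (2) by symmetry reduce to $(m,n)\in\{(5,1),(4,2),(3,3)\}$; (3) for each, display an orthogonal $X\in S(T_{m,n}^c)$ — constructed, e.g., by starting from an orthogonal matrix realising $q(P_7^c)=2$ and perturbing the relevant entries, or by the $M(B,\alpha)$/$\widehat M(B,\alpha,v)$ machinery with a hand-chosen $B$ — and state that $X$ has the SSP, with the verification being a direct check that the associated system $X\circ M=I\circ M=0$, $[X,M]=0$ forces $M=0$. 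The expected hard part is choosing $B$ (or the seven-by-seven matrix) so that simultaneously the pattern is exactly that of $T_{m,n}^c$, the spectral norm condition holds, and the SSP computation comes out cleanly; past experience in this paper suggests a judicious symmetric ansatz exploiting the automorphisms of $T_{m,n}$ will make this tractable.
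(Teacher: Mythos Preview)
Your diagnosis of why the $P_7$ route fails is exactly right: since $T_{m,n}\supsetneq P_7$, the complement inclusion and hence Lemma~\ref{lem:edgedeletion} point the wrong way. But you stop one step short of the key idea. Rather than looking for a common spanning \emph{sub}graph of the three $T_{m,n}$'s, look for a common \emph{super}graph. The paper takes $H$ to be $C_7$ with one chord joining two vertices at distance~$2$; then $H$ contains a triangle, and deleting any one of the four cycle edges not incident with the chord leaves exactly a $T_{m,n}$ with $m+n=6$ (and each of $(5,1),(4,2),(3,3)$ arises this way). Now Lemma~\ref{lem:edgedeletion} applies in the correct direction: $q_S(T_{m,n}^c)\le q_S(H^c)$ for all three pairs simultaneously. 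So a \emph{single} explicit orthogonal matrix in $S(H^c)$ with the SSP suffices, and the paper simply writes one down.

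Your fallback plan of exhibiting three separate orthogonal SSP matrices is valid in principle, but it is three times the work and you have no real mechanism for producing them: the $M(B,\alpha)$ and $\widehat M(B,v,\alpha)$ constructions are tailored to bipartite graphs, which $T_{m,n}$ is not, and ``perturbing the relevant entries'' of an orthogonal matrix will not in general preserve orthogonality. (Also, a small slip earlier: SMP and SSP coincide for matrices with two eigenvalues, but that does not mean every such matrix \emph{has} the SSP, so even the $P_7^c$ matrix from Theorem~\ref{thm:bipartite} would have needed a separate SSP check.) The supergraph trick is what makes the argument clean.
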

\begin{proof}
Let $H$ be the graph obtained by adding an edge to $C_7$ connecting two vertices at distance~$2$. Under the assumption $m+n=6$, the graph $T_{m,n}$ may be obtained from $H$ by deleting a single edge, so $q_M(T_{m,n}^c)\leq q_M(H^c)$ by Lemma~\ref{lem:edgedeletion}. It therefore suffices to exhibit an orthogonal matrix in $S(H^c)$ with the SMP (equivalently, with the SSP), such as:
\[ A= \frac16\left(
\begin{array}{ccccccc}
 0 & 0 & 4 & 0 & 2 \sqrt{2} & 2 \sqrt{3} & 0 \\
 0 & -3 & -\sqrt{6} & 3 & 2 \sqrt{3} & 0 & 0 \\
 4 & -\sqrt{6} & 0 & -\sqrt{6} & 0 & 0 & 2 \sqrt{2} \\
 0 & 3 & -\sqrt{6} & 1 & 0 & 2 \sqrt{2} & 2 \sqrt{3} \\
 2 \sqrt{2} & 2 \sqrt{3} & 0 & 0 & 3 & -\sqrt{6} & -1 \\
 2 \sqrt{3} & 0 & 0 & 2 \sqrt{2} & -\sqrt{6} & 2 & -\sqrt{6} \\
 0 & 0 & 2 \sqrt{2} & 2 \sqrt{3} & -1 & -\sqrt{6} & 3 \\
\end{array}
\right).\qedhere\]
\end{proof}

\begin{theorem}\label{thm:Tmnc}
  We have $q_{S}(T_{m,n}^c)=2$ for any $m, n\ge1$ with $m+n\ge6$.
\end{theorem}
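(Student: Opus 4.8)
The plan is to reduce the general case $m+n\ge 6$ to the single base case $m+n=6$ already established in Proposition~\ref{lem:m+n=6}, and then to climb up to larger graphs by a vertex-duplication argument applied to the complement. The key structural observation is that $T_{m,n}$ is (almost) a path with a single chord, so it has many leaves once $m$ or $n$ is large, and duplicating a leaf of $T_{m,n}$ corresponds, via~\eqref{jdup}, to a controllable operation on the complement. Concretely, if $v$ is a leaf of $T_{m,n}$ sitting at the far end of the longer path, then $\dup(T_{m,n},v)$ is \emph{not} another graph of the form $T_{m',n'}$ (it would create a vertex of degree $2$ with a twin), so I cannot simply iterate Lemma~\ref{prop:dup} on the $T_{m,n}$ family directly. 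Instead I would work one step in from the leaf.

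First I would record that for $m+n\ge 7$, the graph $T_{m,n}$ contains $T_{m-1,n}$ (assuming $m\ge 2$) as obtained by a sequence of simple operations; more precisely, I would observe that $T_{m,n}$ is $\jdup$-reachable from $T_{m+n-6}$-type data only in limited ways, so the cleanest route is: show that $T_{m,n}$ for $m+n\ge 7$ can be written as $\dup(T_{m',n'},v)$ for a suitable leaf $v$ and $m'+n' = m+n-1$, \emph{provided we allow the more general "path with a pendant path" family}. Here the genuinely clean statement is that appending a leaf to the terminal vertex of the long path in $T_{m,n}$ is exactly duplicating the previous terminal vertex only when that vertex has degree $1$ — which it does. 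So in fact $T_{m+1,n} = \dup(T_{m,n}, v)$ where $v$ is the terminal leaf of the $P_m$ side: the new vertex $w$ has $N(w) = N(v) = \{$the penultimate vertex$\}$, and since $v,w$ are non-adjacent in $\dup$, the result is a path lengthened by one on that side with no chord change. Thus the $T_{m,n}$ family \emph{is} closed under leaf-duplication at a terminal leaf, and~\eqref{jdup} gives $\dup(T_{m,n},v)^c = \jdup(T_{m,n}^c, v)$, so by Lemma~\ref{prop:dup}, $q(T_{m+1,n}^c) \le q(T_{m,n}^c)$.

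With that monotonicity in hand the argument finishes quickly. By symmetry $q(T_{m,n+1}^c)\le q(T_{m,n}^c)$ as well, so for any $m\ge n\ge 1$ with $m+n\ge 6$ we get, by induction downward on $m+n$ until we hit the base level $m+n=6$,
\[
 q(T_{m,n}^c) \le q(T_{m_0,n_0}^c) = 2
\]
for an appropriate $(m_0,n_0)$ with $m_0+n_0=6$. Since $T_{m,n}$ is bipartite and has at least $7$ vertices, $T_{m,n}^c$ contains an edge, so $q(T_{m,n}^c)\ge 2$, giving equality $q(T_{m,n}^c)=2$. For the stronger claim $q_S(T_{m,n}^c)=2$, I would instead run the same induction using the SSP-aware version of vertex duplication: one needs that duplicating a leaf can be done while preserving the SSP, which is the content of the relevant part of~\cite{2017arxiv170802438} (or, more carefully, one arranges the base matrix of Proposition~\ref{lem:m+n=6} — which already has the SSP — and checks that the duplication construction in Lemma~\ref{HS04}/Lemma~\ref{lem:dup} applied via~\eqref{jdup} outputs an orthogonal matrix with the SSP). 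The main obstacle I anticipate is precisely this last point: verifying that the leaf-duplication step preserves the SSP rather than merely the eigenvalue set, since Lemma~\ref{prop:dup} as stated only controls $q$, not $q_S$. I would resolve it either by citing the SSP-preserving duplication result from the literature or by noting that for orthogonal matrices (two distinct eigenvalues) the SSP and SMP coincide and invoking Lemma~\ref{lem:edgedeletion} together with a direct check on a single explicit $8\times 8$ orthogonal matrix realizing $T_{7-n,n}^c$ for the boundary cases, then duplicating.
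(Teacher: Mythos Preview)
Your inductive strategy --- reduce to $m+n=6$ via vertex duplication on the complement side --- matches the paper's, but the central structural claim is false: $T_{m+1,n}$ is \emph{not} $\dup(T_{m,n},v)$ for $v$ the terminal leaf. If $v$ has unique neighbour $u$, then $\dup(T_{m,n},v)$ attaches a second leaf $w$ to $u$, so $u$ now carries two pendants --- a fork, not a longer path. The family $\{T_{m,n}\}$ is therefore not closed under leaf duplication, and your monotonicity $q(T_{m+1,n}^c)\le q(T_{m,n}^c)$ does not follow from Lemma~\ref{prop:dup}. (Also, $T_{m,n}$ contains a triangle and is not bipartite; the lower bound $q(T_{m,n}^c)\ge 2$ still holds simply because $|T_{m,n}|\ge 7$ forces $T_{m,n}^c$ to have an edge, but the reason you gave is wrong.)

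The paper repairs this by duplicating the \emph{second} vertex of the $P_m$-arm rather than the leaf: this produces a graph $H_{m,n}$ containing a $4$-cycle, and $T_{m+1,n}$ is obtained from $H_{m,n}$ by deleting one edge of that cycle. Hence $H_{m,n}^c=\jdup(T_{m,n}^c,v)$ is a spanning subgraph of $T_{m+1,n}^c$, and by Theorem~\ref{thm:smp} it suffices to exhibit an orthogonal matrix with the SSP in $S(H_{m,n}^c)$. This is precisely the obstacle you flagged, and it is the real substance of the proof: the paper builds $\hat M$ from the inductively given SSP matrix $M\in S(T_{m,n}^c)$ via Lemma~\ref{HS04} with an explicit $2\times 2$ block~$B$, and then verifies the SSP for $\hat M$ by a direct computation that exploits the particular zero pattern of $T_{m,n}^c$ near the duplicated vertex. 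No off-the-shelf ``SSP-preserving duplication'' result is invoked; the verification is carried out by hand and is where the work lies. Your proposal correctly locates the difficulty but does not get past it.
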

\begin{proof}
We proceed by induction on $N=m+n+1$. The base case, $N=7$, is covered in Proposition~\ref{lem:m+n=6}.  

Suppose $M \in S(T_{m,n}^c)$ has the SSP, with $q(M)=2$ where $m\ge3$. Let $H_{m,n}$ be the graph on $N+1$ vertices obtained from $T_{m,n}$ by duplicating the second vertex in the path $P_m$. 
 \begin{center}
 \begin{tabular}{c}
	\begin{tikzpicture}[style=thick, scale=0.7]
		\draw \foreach \x in {1,...,5} {
				(\x,0) -- (\x + 1,0) };
		\draw (3,0)--(3.5,1)--(4,0);
		\draw (1,0)--(0,1)--(-1,0)--(0,-1);
                \draw [line width=2pt](0,-1)--(1,0);
		\draw[fill=white] \foreach \x in {-1,1,2,...,6} {
		                    (\x,0) circle (1mm)
		};		
		\draw[fill=white] (3.5,1) circle (1mm);
		\draw[fill=gray] (0,1) circle (1mm) (0,-1) circle (1mm);
		\end{tikzpicture}
\\
$H_{5,3}$ \\
	\end{tabular}
\end{center}
Since $T_{m+1,n}$ may be obtained from $H_{m,n}$ by deleting one edge (corresponding to the edge marked in bold for $H_{5,3}$ in the diagram above), to complete the induction step it suffices by Lemma~\ref{lem:edgedeletion} to construct a matrix $\hat M \in S(H_{m,n}^c)$ with $q(\hat M)=2$ which has the SSP (or equivalently, the SMP).

We will use Lemma~\ref{HS04} to construct~$\hat M$, showing in addition that~$\hat M$ has the SSP. Let us order the vertices in $T_{m,n}$ so that vertex $N-1$ is the leaf of $T_{m,n}$ in $P_m$ at distance $m-1$ from a degree~$3$ vertex, and the vertices with distance $1$, $2$ and $3$ from this leaf are $N$, $N-2$ and $N-3$, respectively. We adopt the same numbering for the corresponding vertices in $T_{m,n}^c$. By our inductive hypothesis, there is a matrix $M$ with the SSP and $q(M)=2$ of the form
 $$M=\npmatrix{A & b \\ b^T & \alpha}\in S(T_{m,n}^c),$$
 where $M$ matches the ordering of vertices in $T_{m,n}^c$ established above, hence  $b=\npmatrix{b_1 & b_2 & \ldots & b_{N-3} & 0 & 0}^T $.  By~\cite[Lemma~2.3]{MR3118943},  we may assume without loss of generality that $-1$ and~$1$ are the two distinct eigenvalues of $M$. The graph $T_{m,n}^c$ is connected, so $b\ne 0$. Since $M$ is orthogonal, we have $\alpha\not\in \{-1,1\}$.
Let 
$$B=\frac{1}{2}\left(
\begin{array}{cc}
 \alpha +1& \alpha -1 \\
 \alpha -1 & \alpha +1 \\
\end{array}
\right).$$
Note that $B$ has no zero entries, and has eigenvalues $\alpha$ and $1$ with the corresponding eigenvectors 
$$u=\frac{1}{\sqrt{2}}\npmatrix{1 \\ 1}\qand v=\frac{1}{\sqrt{2}}\npmatrix{1 \\ -1}.$$
By Lemma~\ref{HS04}, the matrix 
 $$\hat M=\npmatrix{A & bu^T \\ ub^T & B}  \in S(H_{m,n}^c)$$
 has \[\sigma(\hat M)=\sigma(M)\cup(\sigma(B)\setminus\{\alpha\})=\sigma(M)\cup \{1\},\] so 
 $q(\hat M)=2$. It only remains to check that $\hat M$ has the SSP. 
 
 Since $M$ has the SSP we know that for an $(N-1)\times (N-1)$ real
 matrix $X$ and $y \in \R^{N-1}$, the following conditions imply that
 $X=0$ and $y=0$:
 \begin{gather*}
 X \circ A=0,\quad X \circ I_{n-1}=0,\quad y \circ b=0, \\
 [A,X]+by^T-yb^T=0,\quad
 Ay-Xb-\alpha y=0.
 \end{gather*}
 Suppose $$Z=\npmatrix{X & Y \\ Y^T & 0},$$ with 
\[Z \circ \hat M=0,\quad Z \circ I_{n+1}=0\qand [\hat M,Z]=0. \] The condition $[\hat M,Z]=0$ gives us the following equalities: 
\begin{align}
[A,X]+bu^TY-Yub^T&=0 \label{MSSP1} \\
AY-Xbu^T-YB&=0. \label{MSSP2}
\end{align}
Now let us consider $\hat y=Yu.$ Note that $X\circ A=0,$ $X \circ I_{N-1}=0$  and $\hat y \circ b=0$. Furthermore, (\ref{MSSP1}) can be written as
$[A,X]+b\hat y^T-\hat y b^T=0,$ and (\ref{MSSP2}) multiplied by $u$ gives us 
$A \hat y-Xb-\alpha \hat y=0$. Since $M$ has the SSP, we conclude that $X=0$ and $\hat y=0.$ At this point we are left with the following conditions:
$$AY-YB=0, \, Y \circ bu^T=0 \qand Yu=0.$$ 
The conditions $Y \circ bu^T=0 \text{ and }Yu=0$ imply that $Y$ is of the form
 $$Y=\npmatrix{0_{N-3,1} \\ y_1 \\ y_2}\npmatrix{1 & -1}.$$
Now we have $YB=Y,$ and 
 $AY=(y_1 c_{N-2}(A)+y_2c_{N-1}(A))\npmatrix{1 & -1},$ where 
  $c_i(A)$ denotes the $i$-th column of $A$. The condition $AY-YB=0$ reduces to the linear system
   $$y_1 c_{N-2}(A)+y_2c_{N-1}(A)=\npmatrix{0_{N-3,1} \\ y_1 \\ y_2}.$$
   Because $M\in S(T_{m,n}^c)$, we have $A\in S(T_{m,n}^c\setminus\{N\})$, from which it follows that the $(N-3)$-rd coordinate of $c_{N-2}(A)$ is $0$ and the $(N-3)$-rd coordinate of $c_{N-1}(A)$ is not zero. Hence, the only solution to this linear system is $y_1=y_2=0$. Therefore $Y=0$  and $\hat{M}$ has the SSP.
\end{proof}

\begin{corollary}\label{cor:Tc} For $m\ge n\ge1$, we have
  \[q(T_{m,n}^c)=
    \begin{cases}
      1,&(m,n)=(1,1),\\
      4,&(m,n)=(2,2),\\
      3,&(m,n)\in \{(2,1),(3,1),(4,1),(3,2)\},\\
      2,&\text{otherwise.}
    \end{cases}\] In particular, for $(m,n)\ne (2,2)$, the graphs
  $T_{m,n}$ and $T_{m,n}^c$ satisfy inequality~\eqref{eq:conjecture}.
\end{corollary}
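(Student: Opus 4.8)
The plan is to assemble Corollary~\ref{cor:Tc} from the results already established for $T_{m,n}$ and its complement, handling the small cases by hand and the generic case via Theorem~\ref{thm:Tmnc}. First I would dispose of the degenerate case $(m,n)=(1,1)$: here $T_{1,1}=K_3$, so $T_{1,1}^c$ is the empty graph on three vertices, whence every matrix in $S(T_{1,1}^c)$ is diagonal and $q(T_{1,1}^c)=1$. For the generic case $m+n\ge6$, Theorem~\ref{thm:Tmnc} gives $q_S(T_{m,n}^c)=2$, and since $q(T_{m,n}^c)\le q_S(T_{m,n}^c)$ and $T_{m,n}^c$ contains an edge (because $T_{m,n}\ne K_{m+n+1}$ for $m+n\ge6$), we get $q(T_{m,n}^c)=2$. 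This already covers everything except the finitely many pairs with $2\le m+n\le5$, i.e. $(m,n)\in\{(1,1),(2,1),(3,1),(2,2),(4,1),(3,2)\}$ — and $(1,1)$ is done.

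Next I would work through the remaining five small pairs individually, in each case combining a lower bound from~\eqref{eq:Fallat} or a known obstruction with an upper bound from~\eqref{eq:cyclebound} or an explicit matrix. For $(m,n)=(2,2)$: $T_{2,2}$ is one of the four exceptional graphs of Figure~\ref{fig:exceptions}, and since $q(T_{2,2})+q(T_{2,2}^c)=8$ with $|T_{2,2}|=5$, and one checks (e.g.\ via~\eqref{eq:Fallat}, as $T_{2,2}$ has a unique shortest path of length~$4$) that $q(T_{2,2})=5$, we obtain $q(T_{2,2}^c)=3$; wait — the figure caption asserts the sum is~$8$, so in fact $q(T_{2,2}^c)=8-q(T_{2,2})$, and I would pin down $q(T_{2,2})$ and $q(T_{2,2}^c)$ directly (the complement $T_{2,2}^c$ is a small graph on $5$ vertices; $q(T_{2,2}^c)=4$ is forced since $T_{2,2}$ itself has $q=4$ by a direct check — indeed $T_{2,2}$ has a unique shortest path with $3$ edges so $q\ge4$, and $q\le4$ by~\eqref{eq:cyclebound} since $T_{2,2}$ contains $C_3$; then $q(T_{2,2}^c)=4$ as well by the same kind of analysis on the $5$-vertex graph $T_{2,2}^c$). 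For $(m,n)\in\{(2,1),(3,1),(4,1),(3,2)\}$: each $T_{m,n}^c$ is connected with a leaf, hence has a unique shortest path of length~$2$, so $q(T_{m,n}^c)\ge3$ by~\eqref{eq:Fallat}; for the matching upper bound $q(T_{m,n}^c)\le3$ I would either exhibit a $3$-eigenvalue matrix in each $S(T_{m,n}^c)$ explicitly, or better, use~\eqref{eq:cyclebound} — each of these four complements contains a cycle on at least $4$ vertices (indeed for $T_{m,1}^c$ one checks $c\ge4$, and similarly $c(T_{3,2}^c)\ge4$), giving $q(T_{m,n}^c)\le |T_{m,n}^c|-2$, and since $|T_{m,n}|\le6$ this yields $q(T_{m,n}^c)\le3$ in the three cases with $m+n\le5$; alternatively Lemma~\ref{lem:edgedeletion} and monotonicity under the duplication/edge-deletion operations already used for $T_{m,n}$ let one bootstrap from a single explicit computation.

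Finally I would record the ``in particular'' clause: for $(m,n)\ne(2,2)$ with $m\ge n\ge1$, we check $q(T_{m,n})+q(T_{m,n}^c)\le|T_{m,n}|+2=m+n+3$. By Theorem~\ref{thm:fallat}, $T_{m,n}$ is of type~4, so $q(T_{m,n})=|T_{m,n}|-1=m+n$; thus the inequality reads $q(T_{m,n}^c)\le 3$, which holds in every case except $(2,2)$ by the table just proved (it equals $2$ for $m+n\ge6$ and for $(1,1)$ it is $1$, and equals $3$ for the four listed small pairs). This completes the argument.

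The main obstacle I anticipate is not any single step but the bookkeeping for the small cases: one must correctly identify each $T_{m,n}^c$ for $m+n\in\{3,4,5\}$ as a concrete small graph, verify the diameter/unique-shortest-path hypothesis of~\eqref{eq:Fallat} for the lower bound, and locate a sufficiently large cycle (or write down an explicit matrix) for the upper bound — and in the $(2,2)$ case one must be careful that the value is genuinely $4$ rather than $3$, consistent with the claim in Figure~\ref{fig:exceptions} that $q(T_{2,2})+q(T_{2,2}^c)=8$.
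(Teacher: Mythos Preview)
Your overall architecture matches the paper's: dispatch $(1,1)$ trivially, invoke Theorem~\ref{thm:Tmnc} for $m+n\ge6$, and treat the remaining five pairs individually. The ``in particular'' clause is also handled exactly as the paper does. The difficulties are all in your uniform ``leaf plus cycle bound'' treatment of the four $q=3$ cases, where several of the specific claims are false.

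For $(m,n)=(2,1)$ the complement is $T_{2,1}^c=P_3\cup K_1$: it is \emph{disconnected} and \emph{acyclic}, so neither your ``connected with a leaf'' lower bound nor your $C_4$ upper bound applies. The paper simply observes that this graph is on the list in Theorem~\ref{thm:fallat}, giving $q=3$ immediately. For $(m,n)\in\{(4,1),(3,2)\}$ the complements have six vertices and \emph{no leaf} (minimum degree~$2$ in both), so the lower-bound argument as you stated it fails; a unique shortest $2$-path does exist in each, but you have to locate it directly, not via a pendant vertex. More seriously, with $|T_{m,n}^c|=6$ the bound~\eqref{eq:cyclebound} together with $c\ge4$ yields only $q\le4$; to reach $q\le3$ you would need $c\ge6$, i.e.\ a Hamiltonian cycle (which does exist in both, but you have not claimed or checked this). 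The paper sidesteps all of this by identifying $T_{3,1}^c$, $T_{3,2}^c$, $T_{4,1}^c$ as specific named small graphs and citing known values of $q$ for them. Finally, for $(2,2)$ your discussion is tangled; the clean one-line argument used in the paper is that $T_{2,2}$ is self-complementary, so $q(T_{2,2}^c)=q(T_{2,2})=|T_{2,2}|-1=4$ by Theorem~\ref{thm:fallat}.
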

\begin{proof}
We have
\begin{itemize}
\item  $T_{1,1}^c=K_3^c=3 K_1$, so $q(T_{1,1}^c)=1$;
\item  $T_{2,2}^c=T_{2,2}$, so $q(T_{2,2}^c)=4$ by Theorem~\ref{thm:fallat};
\item  $T_{2,1}^c=P_3\cup \{v\}$, so $q(T_{2,1}^c)=3$ by Theorem~\ref{thm:fallat};
\item  $T_{3,1}^c$ is the Banner graph Bnr of~\cite{2017arXiv170800064B}, which is shown there to have $q(T_{3,1}^c)=3$; and
\item $T_{3,2}^c=G166$ and $T_{4,1}^c=G173$, which both have $q=q_S=2$ by~\cite[Corollary~6.2]{2017arXiv170801821B}.
\end{itemize}
The remaining cases %
  have $m+n \geq 6$ and therefore $q(T_{m,n}^c)=2$ by Proposition~\ref{thm:Tmnc}. Since $q(T_{m,n})=|T_{m,n}|-1$ by Theorem~\ref{thm:fallat}, inequality~\eqref{eq:conjecture} follows for $(m,n)\ne(2,2)$.
\end{proof}

\section{On the Conjecture}\label{sec:conjecture}

In this section we prove that Conjecture~\ref{conjecture} holds for several families of graphs.
First, note that results from Section~\ref{sec:trees} on type 1 and type 2 exceptional trees imply the following. 

\begin{proposition}\label{prop:qqcW}
  \begin{enumerate}
  \item For $k\in \{2,3,4\}$ and $m,n\in \mathbb N$, we have
\[q(S^k_{m,n})+q((S^k_{m,n})^c)=
  \begin{cases}
   6, & k=2,\; m,n \geq 2,\\[\smallskipamount]
   7, & \parbox[t]{.8\textwidth}{$k=2,\; \min\{m,n\}=1,\;m\ne n$, or \\  $k=3,\; m,n \geq 2$,\smallskip}\\
   8, & \parbox[t]{.8\textwidth}{$k=2,\; m=n=1$, or\\$k=3,\;  \min\{m,n\}=1$, or $k=4$,}
  \end{cases}\]
so with the sole exceptions of $(k,m,n)\in \{(2,1,1),(3,1,1)\}$, corresponding to $S^k_{m,n}\in \{P_4,P_5\}$, we have $q(S^k_{m,n})+q((S^k_{m,n})^c)\le |S^k_{m,n}|+2$.
\item  For $k\ge2$ and $l\ge0$ and $\delta\in \mathbb N^k$, we have
\[q(W(k,l,\delta))+q(W(k,l,\delta)^c)=
  \begin{cases}
   7, & k=2,\; \delta_{\min}\geq 2  \text{ or }  k\geq 3,\\
   8, & k=2,\; \delta_{\min}=1,
 \end{cases}\] so with the sole exception of $(k,l,\delta)=(2,0,(1,1))$, corresponding to $W(k,l,\delta)=P_5$, we have $q(W(k,l,\delta))+q(W(k,l,\delta)^c)\le |W(k,l,\delta)|+2$.
\end{enumerate}
\end{proposition}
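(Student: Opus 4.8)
The plan is to assemble the proposition directly from the computations of $q(\cdot)$ and $q(\cdot^c)$ for the exceptional trees carried out earlier in Section~\ref{sec:trees}. For part~(1) I would recall from Proposition~\ref{qSkmn} that $q(S^k_{m,n})=k+2$ for all $k\ge2$ and $m,n\ge1$, and combine this with the values of $q((S^k_{m,n})^c)$ from Proposition~\ref{prop:S2} (for $k=2$), Proposition~\ref{prop:S3} (for $k=3$) and Proposition~\ref{prop:S4} (for $k=4$). Adding in each case: for $k=2$ the sum is $4+2=6$ if $m,n\ge2$, $4+3=7$ if $\min\{m,n\}=1$ with $m\ne n$, and $4+4=8$ if $m=n=1$; for $k=3$ it is $5+2=7$ if $m,n\ge2$ and $5+3=8$ if $\min\{m,n\}=1$; and for $k=4$ it is $6+2=8$ for all $m,n\ge1$. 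This is exactly the stated table. Part~(2) is handled identically, using $q(W(k,l,\delta))=5$ from Proposition~\ref{prop:qW} together with $q(W(2,l,\delta)^c)$ from Proposition~\ref{prop:W2c} and $q(W(k,l,\delta)^c)=2$ for $k\ge3$ from Proposition~\ref{prop:W3c}: the sum is $5+2=7$ unless $k=2$ and $\delta_{\min}=1$, in which case it is $5+3=8$.

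It then remains to compare each sum with $|G|+2$. For $G=S^k_{m,n}$ we have $|G|=k+m+n$, so since $q(G)=k+2$ the inequality $q(G)+q(G^c)\le|G|+2$ is equivalent to $q((S^k_{m,n})^c)\le m+n$. Reading off the three possible values of $q((S^k_{m,n})^c)$, this holds in every case except $(k,m,n)=(2,1,1)$ (where $4\le2$ fails) and $(k,m,n)=(3,1,1)$ (where $3\le2$ fails); and by the definition of $S^k_{m,n}$ these two graphs are $P_4$ and $P_5$ respectively. For $G=W(k,l,\delta)$ we have $|G|=1+k+l+\sum_{i=1}^kd_i$, and since $q(G)=5$ the inequality becomes $q(W(k,l,\delta)^c)+2\le k+l+\sum_{i=1}^kd_i$. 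When the left side equals $4$ (the case $q(G^c)=2$) this follows from $k\ge2$ and $\sum d_i\ge k$, which give $k+\sum d_i\ge2k\ge4$; when it equals $5$ (the case $q(G^c)=3$, forcing $k=2$ and $\delta_{\min}=1$) it reduces to $l+\sum_{i=1}^2d_i\ge3$, which holds unless $l=0$ and $\delta=(1,1)$, i.e. unless $W(k,l,\delta)=P_5$. In all the non-exceptional cases one sees the bound is sometimes met with equality (for instance at $S^2_{2,1}$, $S^3_{2,1}$, $S^4_{1,1}$, $W(2,1,(1,1))$ and $W(2,0,(2,1))$), which is consistent with the claim in Section~\ref{sec:trees} that $2$ cannot be lowered.

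There is no genuine difficulty here: the entire content of the proposition is contained in the results quoted above, and the argument is pure bookkeeping. The only place where care is needed is at the boundary cases of each case distinction — in particular in verifying the identifications $S^2_{1,1}=P_4$, $S^3_{1,1}=P_5$ and $W(2,0,(1,1))=P_5$, and in confirming that these are the only parameter choices for which $q(G)+q(G^c)>|G|+2$. I would present the comparison with $|G|+2$ compactly, via the two reduced inequalities $q((S^k_{m,n})^c)\le m+n$ and $q(W(k,l,\delta)^c)+2\le k+l+\sum_{i=1}^kd_i$ as above, rather than listing a case-by-case table.
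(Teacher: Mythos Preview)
Your proposal is correct and follows exactly the approach the paper intends: the paper states this proposition without proof, merely noting that ``results from Section~\ref{sec:trees} on type~1 and type~2 exceptional trees imply the following,'' and your argument is a faithful (and more explicit) unpacking of that sentence, invoking Propositions~\ref{qSkmn}, \ref{prop:S4}, \ref{prop:S3}, \ref{prop:S2}, \ref{prop:qW}, \ref{prop:W2c} and \ref{prop:W3c} and checking the arithmetic. Your reduction of the inequality to $q((S^k_{m,n})^c)\le m+n$ and $q(W(k,l,\delta)^c)+2\le k+l+\sum_i d_i$ is a clean way to organise the verification.
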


We now deduce that the conjecture holds for any tree.

\begin{theorem}\label{cor:tree}
  If $T$ is any tree except $P_4$, $P_5$, then  $q(T)+q(T^c)\leq |T|+2$.
 \end{theorem}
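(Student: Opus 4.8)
The plan is to reduce the statement for an arbitrary tree $T$ to the cases already handled in this section, using the structural dichotomy provided by Theorem~\ref{thm:bipartite} and Proposition~\ref{qc=2}. First I would observe that since $T$ is a tree it is bipartite, so it admits a $2$-colouring with $m$ black and $n$ white vertices; if $|T|\le 2$ the inequality is trivial (indeed $q(T)+q(T^c)\le |T|+1$ for such small graphs), so we may assume $|T|=m+n\ge 3$. Now apply Theorem~\ref{thm:bipartite}: either $T$ does not contain $K_{m_1,n_1}\cup K_{m_2,n_2}$ as a spanning subgraph for any admissible partition, in which case $q(T^c)=2$; or $T$ does contain such a spanning subgraph, in which case Proposition~\ref{qc=2} tells us that $T$ has one of the three exceptional forms (type~0, type~1, or type~2).

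Next I would dispatch the generic (non-exceptional) case. If $q(T^c)=2$, then since $q(T)\le |T|$ always, and since $q(T)=|T|$ forces $T=P_{|T|}$ — a path, which for $|T|\ge 3$ with $|T|\notin\{4,5\}$ needs a separate look — we need $q(T)\le |T|$ together with the strict inequality whenever $q(T^c)=2$. Concretely: if $T$ is not a path, then $q(T)\le |T|-1$, hence $q(T)+q(T^c)\le (|T|-1)+2=|T|+1\le |T|+2$; and if $T=P_{|T|}$ is a path with $|T|\ge 6$, then $q(P_{|T|})=|T|$ and $q(P_{|T|}^c)=2$ (this is exactly the $P_6^c$ computation of Proposition~\ref{prop:S4} extended via Lemma~\ref{lem:exceptions}, or one checks $P_n^c$ satisfies Theorem~\ref{thm:bipartite} directly for $n\ge6$), so $q(T)+q(T^c)=|T|+2$, with equality. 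The remaining paths $P_3$ is the star $K_{1,2}$ (type~0, handled below), $P_4$ and $P_5$ are the excluded graphs, so no path obstructs us.

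Then I would handle the three exceptional families, each of which has already been completely analysed in this section. For type~0, $T=K_{1,|T|-1}$, we have $T^c=K_{|T|-1}\cup\{v\}$ so $q(T^c)=2$, while $q(K_{1,|T|-1})=2$ for $|T|\ge3$ (a star has two distinct eigenvalues via an appropriate matrix), giving sum $4\le |T|+2$. For type~1, $T=S^k_{m,n}$ with $k\in\{2,3,4\}$, Proposition~\ref{prop:qqcW}(1) gives the exact value of $q(T)+q(T^c)$ and states that it is $\le |T|+2$ except when $S^k_{m,n}\in\{P_4,P_5\}$. For type~2, $T=W(k,l,\delta)$ with $k\ge2$ (the case $k=1$ being $K_{1,d_1+1}$ or $S^2_{l,d_1}$, already covered by type~0 / type~1), Proposition~\ref{prop:qqcW}(2) gives the value and states $q(T)+q(T^c)\le |T|+2$ except when $W(k,l,\delta)=P_5$. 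Since the only excluded trees arising anywhere in this case analysis are $P_4$ and $P_5$, the theorem follows. The only mildly delicate point — and the one I would be most careful about — is making sure the path $P_{|T|}$ with $|T|\ge 6$ is correctly accounted for (it saturates the bound but does not violate it), and that the small cases $P_3, P_4, P_5$ are correctly routed: $P_3$ into the generic/type-0 analysis and $P_4,P_5$ into the stated exceptions.
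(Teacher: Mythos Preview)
Your approach is essentially the paper's: split according to whether $q(T^c)=2$ or $T$ falls into one of the exceptional families, and invoke Proposition~\ref{prop:qqcW} for the latter. The paper's write-up is considerably shorter, though, because the entire ``generic'' case is one line: if $q(T^c)=2$, then $q(T)+q(T^c)\le |T|+2$ simply because $q(T)\le |T|$ for every graph. There is no need to separate paths from non-paths, check $P_n^c$ for $n\ge 6$, or route $P_3$ anywhere---all of that work is subsumed by this single inequality. Your type~0 case is likewise subsumed, since you (correctly) note $q(T^c)=2$ there.

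One small factual slip: you assert $q(K_{1,|T|-1})=2$ for $|T|\ge 3$, but in fact $q(K_{1,n})=3$ for every $n\ge 2$ (any two leaves are joined by a unique shortest path of length~$2$, so~\eqref{eq:Fallat} gives $q\ge 3$; indeed $K_{1,2}=P_3$). This does not damage your argument, since the bound follows from $q(T^c)=2$ alone, but the parenthetical justification should be dropped.
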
 
 \begin{proof}
   Since $q(T)\le |T|$, the inequality holds if
   $q(T^c)=2$. 
   In all other cases, the inequality follows from
   Theorem~\ref{thm:treeC} and Proposition~\ref{prop:qqcW}.
 \end{proof}

\begin{theorem}\label{thm:n-1}
   If $G$ is any graph %
  with $q(G)\geq|G|-1$ except for $P_4$, $P_5$ and $T_{2,2}$, then $q(G)+q(G^c)\leq |G|+2$.
\end{theorem}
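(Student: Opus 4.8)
The plan is to use the classification of graphs with $q(G)\ge|G|-1$ provided by Theorem~\ref{thm:fallat}, which splits such graphs into exactly four families, and to verify the inequality~\eqref{eq:conjecture} for each family separately using results already assembled in Sections~\ref{sec:trees} and~\ref{q=n-1}. Since $q(G) \le |G|$ always, the inequality $q(G)+q(G^c)\le|G|+2$ holds automatically whenever $q(G^c)\le2$, so the whole argument reduces to handling the (finitely many) cases where $q(G^c)\ge3$.

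First I would dispatch the two tree families, items~(1) and~(3) of Theorem~\ref{thm:fallat}. Item~(1) consists of paths: a path $P_n$ has $q(P_n)=n=|P_n|$, so we need $q(P_n^c)\le2$, which holds for all $n$ except the small cases; by Theorem~\ref{thm:treeC} (or directly, $P_4^c=P_4$ and $P_5^c$), the only paths violating~\eqref{eq:conjecture} are $P_4$ and $P_5$, which are excluded. Item~(3), a path with one leaf attached to an interior vertex, is also a tree, so it is covered by Theorem~\ref{cor:tree}: every tree except $P_4$ and $P_5$ satisfies the inequality, and in particular these do.

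Next I would handle item~(2), the graphs $P_{n-1}\cup\{n\}$, i.e.\ the complements $R_n^c$. Proposition~\ref{prop:Rn} already records that $R_n^c$ and $R_n$ satisfy~\eqref{eq:conjecture} for all $n\in\mathbb N$: indeed $q(R_n)\le3$ always and $q(R_n^c)=n-1$, and one checks $3+(n-1)\le (n+1)+2$ trivially (with the small cases verified directly in the proof of Proposition~\ref{prop:Rn}). Finally, for item~(4), the graphs $T_{m,n}$, Corollary~\ref{cor:Tc} states precisely that $T_{m,n}$ and $T_{m,n}^c$ satisfy~\eqref{eq:conjecture} for all $m\ge n\ge1$ with $(m,n)\ne(2,2)$; the single exception $T_{2,2}$ is excluded in the hypothesis. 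Assembling these four cases completes the proof.

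There is essentially no obstacle here: the theorem is a corollary that merely collects the case-by-case work already done in the preceding sections, reorganised along the classification of Theorem~\ref{thm:fallat}. The only thing to be careful about is bookkeeping---making sure that the excluded graphs $P_4$, $P_5$, $T_{2,2}$ account for exactly the graphs among these four families that fail~\eqref{eq:conjecture}, and noting that $P_5^c$ (the fourth graph in Figure~\ref{fig:exceptions}) is \emph{not} in the scope of this theorem since $q(P_5^c)=3<|P_5^c|-1=4$, so it need not be listed among the exceptions here.
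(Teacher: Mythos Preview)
Your proposal is correct and matches the paper's own proof exactly: the paper's argument is the one-line ``This follows by combining Theorem~\ref{thm:fallat} with Theorem~\ref{cor:tree}, Proposition~\ref{prop:Rn} and Corollary~\ref{cor:Tc},'' which is precisely the case split you carry out. (One small slip: in your aside about item~(2) you write $(n+1)+2$ where it should be $n+2$, since $|R_n^c|=n$; but the citation of Proposition~\ref{prop:Rn} already settles that case regardless.)
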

\begin{proof}
  This follows by combining Theorem~\ref{thm:fallat} with Theorem~\ref{cor:tree}, Proposition~\ref{prop:Rn} and Corollary~\ref{cor:Tc}.
\end{proof}

\begin{corollary} \label{cor:q leq 4}
If $G$ is any graph except $P_4$, $P_5$, $P_5^c$ or $T_{2,2}$ %
and $q(G) \leq 4$ or $q(G^c) \leq 4$, then 
$ q(G)+q(G^c)\leq |G|+2$. 
\end{corollary}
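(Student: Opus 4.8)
The plan is to obtain this as a short consequence of Theorem~\ref{thm:n-1}. The key preliminary observation is that the exceptional set $\{P_4,P_5,P_5^c,T_{2,2}\}$ is invariant under complementation: $P_4^c=P_4$, $T_{2,2}^c=T_{2,2}$, and $P_5,P_5^c$ are complements of one another. Hence $G\notin\{P_4,P_5,P_5^c,T_{2,2}\}$ if and only if $G^c\notin\{P_4,P_5,P_5^c,T_{2,2}\}$. Since $q(G)+q(G^c)$ and $|G|$ are also unchanged under $G\leftrightarrow G^c$, and the hypothesis supplies $q(G)\le 4$ or $q(G^c)\le 4$, I may assume without loss of generality that $q(G)\le 4$; and then the standing hypothesis gives $G^c\notin\{P_4,P_5,P_5^c,T_{2,2}\}$, in particular $G^c\notin\{P_4,P_5,T_{2,2}\}$.

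Next I would split into cases according to the size of $q(G^c)$. If $q(G^c)\le |G|-2$, then
\[ q(G)+q(G^c)\le 4+(|G|-2)=|G|+2, \]
and we are done. Otherwise $q(G^c)\ge |G|-1=|G^c|-1$, so $G^c$ satisfies the hypotheses of Theorem~\ref{thm:n-1}. As $G^c\notin\{P_4,P_5,T_{2,2}\}$, applying that theorem to $G^c$ yields $q(G^c)+q\big((G^c)^c\big)\le |G^c|+2$, which is precisely $q(G)+q(G^c)\le|G|+2$.

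Since the argument amounts to two one-line applications, there is no genuine obstacle here; the only thing needing care is the bookkeeping of the four exceptional graphs under complementation. In particular, $P_5$ has $q(P_5)=5>4$, so it satisfies the hypothesis only through $q(P_5^c)=3\le4$, which is exactly why $P_5^c$ must appear in the exceptional list even though its own $q$-value is not the one causing trouble. One could alternatively bypass Theorem~\ref{thm:n-1} and argue directly from the classification of Theorem~\ref{thm:fallat} together with the complement computations in Theorem~\ref{thm:treeC}, Proposition~\ref{prop:Rn} and Corollary~\ref{cor:Tc}, but that route is considerably longer and less transparent.
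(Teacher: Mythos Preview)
Your proof is correct and follows essentially the same route as the paper's: a symmetry reduction (yours to $q(G)\le4$, the paper's to $q(G^c)\le4$), then a two-case split according to whether the other $q$-value is at most $|G|-2$ or at least $|G|-1$, invoking Theorem~\ref{thm:n-1} in the latter case. Your explicit observation that the exceptional set $\{P_4,P_5,P_5^c,T_{2,2}\}$ is closed under complementation is a nice addition that the paper leaves implicit in the word ``symmetry''.
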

  \begin{proof}
    We may assume by
    symmetry that $q(G^c)\leq 4$. If $q(G)\leq |G|-2$, then the
    inequality clearly holds. On the other hand, if $q(G)\geq |G|-1$,
    then the inequality holds by Theorem~\ref{thm:n-1}.
  \end{proof}
  
We recall that in~\cite[Theorem~35]{MR3665573}, it was shown that \begin{equation}q(G^c)\le 2\chi(G)\label{eq:chibound}
\end{equation}
for any graph~$G$.
\begin{corollary}\label{cor:bipartite}
Conjecture~\ref{conjecture} holds for the following families of graphs:
\begin{enumerate}
\item bipartite graphs and their complements, including trees;
\item graphs with minimum rank at most three and their complements;
  and
\item joins $G \vee H$ and disjoint unions $G\cup H$ of two graphs,
  such that the number of vertices of $G$ and $H$ differ by at most 2.
\end{enumerate}
\end{corollary}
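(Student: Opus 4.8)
Parts~(1) and~(2) reduce immediately to Corollary~\ref{cor:q leq 4}. If $G$ is bipartite then $\chi(G)\le 2$, so $q(G^c)\le 2\chi(G)\le 4$ by~\eqref{eq:chibound}, and Corollary~\ref{cor:q leq 4} applies to any such $G$ outside the excluded list; the bipartite graphs among $P_4,P_5,P_5^c,T_{2,2}$ are $P_4$ and $P_5$, which genuinely fail~\eqref{eq:conjecture}, so the biconditional in Conjecture~\ref{conjecture} holds throughout this family. Applying~\eqref{eq:chibound} to $G^c$ rather than $G$ handles complements of bipartite graphs in exactly the same way, the relevant excluded graphs now being $P_4$ and $P_5^c$. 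For~(2), $\mr(G)\le 3$ gives $q(G)\le\mr(G)+1\le 4$ by~\eqref{eq:mrbound}, so Corollary~\ref{cor:q leq 4} again finishes the argument, and symmetrically if $\mr(G^c)\le 3$; one only needs to observe that whichever of the four excluded graphs have minimum rank at most three, or complement of minimum rank at most three, are among the known failures of~\eqref{eq:conjecture}.

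Part~(3) is the substantive case and does not reduce to Corollary~\ref{cor:q leq 4}, since a balanced join can have $q$ much larger than~$4$. I would first pass to the join case: since $(G\cup H)^c=G^c\vee H^c$ with $\bigl||G^c|-|H^c|\bigr|=\bigl||G|-|H|\bigr|$, it suffices to prove
\[ q(A\vee B)+q\bigl((A\vee B)^c\bigr)\le |A|+|B|+2 \]
for all non-empty graphs $A,B$ with $\bigl||A|-|B|\bigr|\le 2$. No excluded graph occurs here, as none of $P_4,P_5,P_5^c,T_{2,2}$ is disconnected or a join of two non-empty graphs. We may assume $\min\{|A|,|B|\}\ge 2$, since otherwise $|A|+|B|\le 4$, whence $q(A\vee B)\le|A\vee B|\le 4$ and Corollary~\ref{cor:q leq 4} applies. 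Now write $n=|A|+|B|$. The key point is that $A\vee B$ contains $K_{|A|,|B|}$ as a spanning subgraph, and the latter has circumference $2\min\{|A|,|B|\}\ge n-2$; hence $c(A\vee B)\ge n-2$ and the cycle bound~\eqref{eq:cyclebound} gives $q(A\vee B)\le n-\lfloor (n-2)/2\rfloor=\lceil n/2\rceil+1$. Since $\bigl||A|-|B|\bigr|\le 2$ also forces $\max\{|A|,|B|\}\le\lfloor n/2\rfloor+1$, combining this with the union bound displayed below completes the proof: $q(A\vee B)+q(A^c\cup B^c)\le(\lceil n/2\rceil+1)+(\lfloor n/2\rfloor+1)=n+2$, and then part~(3) for disjoint unions follows from the join case by the complementation reduction above.

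The remaining ingredient, and the main obstacle, is the union bound
\[ q(K\cup L)\le\max\{|K|,|L|\}\qquad\text{for arbitrary graphs }K,L. \]
Here one would take $|L|=\max\{|K|,|L|\}=:m$, choose any $A'\in S(K)$, and then choose $B'\in S(L)$ with $\sigma(B')\supseteq\sigma(A')$; this is possible since $|\sigma(A')|=q(A')\le|K|\le m$, provided $L$ admits a matrix in $S(L)$ with any prescribed set of $m$ distinct eigenvalues, and it forces $q(A'\oplus B')\le|\sigma(B')|=m$. For connected $L$ this last flexibility property is standard in the IEPG: a generic matrix in $S(L)$ has simple spectrum and satisfies the SSP, so by the spectral perturbation theorem for SSP matrices, together with scaling, every simple spectrum is realised; the general case then follows by distributing a prescribed spectrum across the connected components of $L$. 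When $\min\{q(K),q(L)\}\le 2$, or when $K\cong L$, the union bound is elementary---align the two spectra, or take $A'\oplus A'$---so the content lies entirely in the case that both $K$ and $L$ force at least three distinct eigenvalues, and it is this flexibility statement that I expect to be the delicate point to pin down.
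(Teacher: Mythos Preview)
Your treatment of parts~(1) and~(2) is correct and matches the paper's argument exactly.

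For part~(3) you take a genuinely different route. The paper does not use the circumference bound at all: it cites \cite[Theorem~5.2]{monfared2016nowhere} and \cite[Theorem~4.6]{2017arXiv170801821B} to obtain a bound of the form $q(G\vee H)\le 4$ directly whenever $\bigl||G|-|H|\bigr|\le 2$, and then Corollary~\ref{cor:q leq 4} finishes the job exactly as in parts~(1) and~(2). Your approach instead splits the sum, bounding $q(A\vee B)$ via~\eqref{eq:cyclebound} and $q(A^c\cup B^c)$ via your union bound $q(K\cup L)\le\max\{|K|,|L|\}$. The arithmetic is correct and the union bound is a true statement, so the overall strategy is sound.

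However, your justification of the union bound has a real gap, which you partly anticipate. The sketch ``generic SSP plus scaling gives every simple spectrum'' does not work as stated: the SSP gives only \emph{local} spectral flexibility (the map from $S(L)$ to ordered spectra is a submersion at an SSP point), and affine reparametrisation $\lambda\mapsto c\lambda+d\mathbf 1$ is a two-parameter group action. Together these show that the set of realisable simple spectra is open and invariant under a two-dimensional group, which for $m\ge 4$ is far from enough to conclude it is all of the open chamber $\{\lambda_1<\cdots<\lambda_m\}$; you would need a closedness argument, and there is no obvious compactness to exploit. The result you actually need---that every connected graph on $m$ vertices realises every prescribed set of $m$ distinct eigenvalues, indeed with the SSP---is a genuine theorem of Monfared and Shader, and is in fact precisely \cite[Theorem~5.2]{monfared2016nowhere}, one of the two external inputs the paper invokes. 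So your alternative route ultimately rests on the same outside ingredient, organised less efficiently: the paper combines it with a companion result on joins to get the stronger conclusion $q(G\vee H)\le 4$, whereas you use it only to get $q(G^c\cup H^c)\le\lfloor n/2\rfloor+1$ and must then bring in the circumference bound on the other side.
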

\begin{proof} The first two assertions follow immediately from
  Corollary~\ref{cor:q leq 4} combined with~\eqref{eq:chibound}, and the
  bound~\eqref{eq:mrbound}, respectively. The third assertion for
  joins follows in a similar way using
  \cite[Theorem~5.2]{monfared2016nowhere}
  and~\cite[Theorem~4.6]{2017arXiv170801821B}, and taking complements
  yields the assertion about disjoint unions.
\end{proof}

Note that the chromatic number $\chi(G)$ of a graph $G$ can be expressed in the terms of
its complement as
$$\chi(G)=\min\{k\colon G^c \supseteq K_{n_1} \cup K_{n_2}\cup \ldots \cup K_{n_k},\; n_1+\ldots+n_k=|G|\},$$
so $\chi(G)=\theta(G^c)$ where $\theta(H)$ is the clique cover number of a graph~$H$.
Here, we generalise this notion to 
$$\theta^*(G, \mathcal{S})=\min\{k\colon G \supseteq 
 H_{1} \cup H_{2}\cup \ldots \cup H_{k},\; \sum_{j=1}^k |H_j|=|G|, \; H_j \in \mathcal{S} \},$$
where $\mathcal S$ is a given family of graphs. As a special case that is relevant to $q(G)$, we define 
$$\theta^*(G, \ell)=\theta^*(G,\mathcal S_{\ell}), \text{ where } \mathcal S_{\ell}=\{G\colon q_M(G) \leq \ell\}.$$
The next lemma is a generalisation of the bound~\eqref{eq:chibound} and its proof from~\cite[Theorem~35]{MR3665573}.

\begin{lemma}\label{chi*}
  For any graph $G$, we have $q(G) \leq \ell \cdot \theta^*(G, \ell)$.
\end{lemma}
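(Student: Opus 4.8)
The plan is to mimic the proof of~\eqref{eq:chibound} from~\cite[Theorem~35]{MR3665573}. Write $k = \theta^*(G,\ell)$, so that $G$ contains a spanning subgraph $\tilde G = H_1 \cup H_2 \cup \dots \cup H_k$ with each $H_j \in \mathcal S_\ell$, i.e., $q_M(H_j) \le \ell$. For each $j$, pick a matrix $A_j \in S(H_j)$ with the SMP and $q(A_j) \le \ell$. The first step is to adjust these matrices so that their spectra are pairwise disjoint: by~\cite[Lemma~2.3]{MR3118943} (affine shifts preserve the SMP and the number of distinct eigenvalues), replace each $A_j$ by $c_j A_j + d_j I$ for suitable reals $c_j > 0$, $d_j$, so that the distinct eigenvalues of the modified $A_j$ all lie in a short interval $I_j$, and the intervals $I_1, \dots, I_k$ are pairwise disjoint. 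This keeps $q(A_j) \le \ell$ for each $j$.

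Next, form the direct sum $A = A_1 \oplus A_2 \oplus \dots \oplus A_k \in S(\tilde G)$. By Theorem~\ref{thm:oplus} (iterated), since each $A_j$ has the SMP and the spectra are pairwise disjoint, $A$ has the SMP. The number of distinct eigenvalues of $A$ is the sum of the numbers of distinct eigenvalues of the summands (because the spectra are disjoint), so $q(A) \le \sum_{j=1}^k q(A_j) \le \ell k = \ell\,\theta^*(G,\ell)$. Finally, since $\tilde G$ is a spanning subgraph of $G$ and $A \in S(\tilde G)$ has the SMP, Theorem~\ref{thm:smp} yields a matrix $B \in S(G)$ (with the SMP) such that $\sigma(B) = \sigma(A)$; in particular $q(G) \le q(B) = q(A) \le \ell\,\theta^*(G,\ell)$, as required.

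I do not anticipate a genuine obstacle here: every ingredient is already available in the excerpt. The only point requiring a little care is the spectral-separation step — one must confirm that an affine transformation $A_j \mapsto c_j A_j + d_j I$ with $c_j > 0$ preserves membership in $S(H_j)$ (clear, since off-diagonal zero patterns are unchanged), preserves the SMP (this is exactly~\cite[Lemma~2.3]{MR3118943}), and leaves $q(A_j)$ unchanged — and then that one can choose the constants to push the $k$ spectra into disjoint intervals, which is elementary since each spectrum is a finite set. With that in hand, the combination of Theorem~\ref{thm:oplus} and Theorem~\ref{thm:smp} finishes the argument immediately.
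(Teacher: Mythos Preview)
Your proof is correct and follows essentially the same approach as the paper's own proof: choose SMP realisations $A_j\in S(H_j)$ with $q(A_j)\le\ell$ and pairwise disjoint spectra, form the direct sum (which has the SMP by Theorem~\ref{thm:oplus}), and then perturb to a matrix in $S(G)$ with the same spectrum via Theorem~\ref{thm:smp}. The only difference is that you spell out the spectral-separation step via affine shifts, whereas the paper simply asserts that the $A_j$ can be chosen with disjoint spectra.
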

\begin{proof}
  Choose graphs $H_1, \ldots, H_{t}\in \mathcal S_\ell$ so that
  $H_{1} \cup H_{2}\cup \ldots \cup H_{t}$ is a spanning subgraph of
  $G$, where $t=\theta^*(G,\ell)$. For each $j=1,2,\ldots,t$, we may
  choose $A_j \in S(H_j)$ with the SMP so that $q(A_j)=q_M(H_j)\le\ell$, and
  so that matrices $A_1,\ldots,A_{t}$ have pairwise disjoint
  spectra. By Theorem~\ref{thm:oplus}, the matrix
  $A=A_1 \oplus A_2 \oplus \ldots \oplus A_{t}$ has the SMP and thus
  by Theorem~\ref{thm:smp} there exists a matrix $B \in S(G)$ with 
  $\sigma(B)=\sigma(A)$. Therefore
  $q(G) \leq q(B)=q(A)\leq \ell \cdot t$.
\end{proof}

Note that Corollary~\ref{cor:q leq 4} and Lemma~\ref{chi*} imply that
Conjecture~\ref{conjecture} holds for graphs~$G$ with
$\theta^*(G, 2)=2$ or $\theta^*(G^c, 2)=2$. While we can envision
applications of Lemma~\ref{chi*} more broadly, we will exploit it here
to prove the conjecture for unicyclic graphs using only the fact that
the graphs $K_n$ and also
$K_n\setminus \{e\}$ for $n\ge4$ belong to $\mc S_2$.

  \begin{theorem}\label{unicyclic}
    If $G$ is any unicyclic graph except $T_{2,2}$, then $q(G^c) \leq 4$ and hence $G$ satisfies Conjecture~\ref{conjecture}.
  \end{theorem}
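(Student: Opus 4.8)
The plan is to use Lemma~\ref{chi*} with $\ell=2$, so it suffices to show $\theta^*(G^c,2)\le 2$ for every unicyclic graph $G$ other than $T_{2,2}$; that is, to decompose $G^c$ into at most two spanning subgraphs, each belonging to $\mathcal S_2$. The natural candidates for members of $\mathcal S_2$ are complete graphs $K_n$ and near-complete graphs $K_n\setminus\{e\}$ for $n\ge4$, since these have $q=q_M=2$ (the second fact being available from the results already cited). In $G^c$, a set of vertices spans a clique precisely when it is independent in $G$, so the task reduces to a statement purely about $G$: we want to partition $V(G)$ into two sets $V_1,V_2$ such that each $G[V_i]$ is either edgeless (giving a clique in $G^c$) or has exactly one edge on $\ge 4$ vertices (giving $K_{|V_i|}\setminus\{e\}$ in $G^c$). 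Equivalently, we need a partition of $V(G)$ into two parts, each inducing at most one edge, with the proviso that a part inducing exactly one edge must have size at least $4$.

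First I would reduce to the case where $G$ is connected, i.e. $G$ consists of a single cycle $C_k$ with trees attached to its vertices. If $G$ is a forest (no cycle at all), then $G$ is bipartite and Corollary~\ref{cor:bipartite}(1) already applies, so $q(G^c)\le 4$; hence assume $G$ has a unique cycle $C_k$. Next I would handle small orders separately: for $|G|\le 4$ the only unicyclic graphs are $K_3$, $C_3$ with a pendant (the "paw"), and $C_4$, all of which are easily checked directly; the genuinely delicate small case is $T_{2,2}$ itself, which is excluded. For $|G|\ge 5$ I would argue as follows. Two-colour the cycle $C_k$: if $k$ is even this is a proper $2$-colouring of the cycle, and extending it to a proper $2$-colouring of each attached tree makes $G$ bipartite, so again Corollary~\ref{cor:bipartite}(1) finishes it. So the only real work is when $k$ is odd.

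For odd $k$: $G$ minus one cycle-edge $e=\{x,y\}$ is a tree, hence bipartite with colour classes $B,W$; now $x,y$ lie in the same class, say $B$. Then $W$ is independent in $G$ and $B$ induces exactly the single edge $\{x,y\}$. If $|B|\ge 4$, the partition $\{B,W\}$ works immediately: $G^c[W]=K_{|W|}$ and $G^c[B]=K_{|B|}\setminus\{e\}\in\mathcal S_2$. If $|B|\le 3$ then, since $|G|=|B|+|W|\ge 5$, we have $|W|\ge 2$; here I would instead move one suitable vertex of $W$ not adjacent in $G$ to both of $x,y$ (such a vertex exists unless the configuration is very constrained — this is exactly where $T_{2,2}$ and a few tiny graphs live) into $B$, or alternatively split off the edge $\{x,y\}$ into a part of its own together with padding vertices from $W$ to reach size $4$; a short case analysis on the at most two "small" residual configurations will confirm that, apart from $T_{2,2}$, one of these rebalancing moves always yields a valid partition. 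I expect this last balancing step — verifying that when $|B|\le 3$ one can always reshuffle to get both parts into $\mathcal S_2$, and pinning down $T_{2,2}$ as the unique obstruction — to be the main obstacle; everything else is routine. Once $\theta^*(G^c,2)\le 2$ is established, Lemma~\ref{chi*} gives $q(G^c)\le 4$, and then $G$ satisfies Conjecture~\ref{conjecture} by Corollary~\ref{cor:q leq 4} (noting $G\ne P_4,P_5,P_5^c,T_{2,2}$ since none of the first three is unicyclic and $T_{2,2}$ is excluded).
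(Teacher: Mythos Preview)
Your high-level plan matches the paper's: dispose of the bipartite (even-cycle) case via Corollary~\ref{cor:bipartite}, and for odd cycles establish $\theta^*(G^c,2)\le 2$ using cliques and near-cliques in $G^c$, then apply Lemma~\ref{chi*} and Corollary~\ref{cor:q leq 4}. The paper's partitioning differs slightly and is cleaner: it removes a cycle \emph{vertex} rather than an edge, properly $2$-colours the resulting forest, and adjoins the removed vertex to the larger colour class; once $|G|\ge 7$ that class automatically has size $\ge 4$, so no rebalancing is needed. Your edge-deletion variant is workable in principle, but the two rebalancing moves you list can both fail simultaneously (for instance, for a triangle with two pendants at a single vertex every $W$-vertex is adjacent to $x$ or to $y$), so the ``short case analysis'' would need moves beyond those you describe, such as choosing a different cycle edge to delete.

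The genuine gap is at $|G|\in\{5,6\}$: here the $\theta^*$ route can fail outright, not merely your particular partition. Take $G=C_5$, so $G^c=C_5$. Every subgraph of $C_5$ has maximum degree at most $2$, so the only members of $\mathcal S_2$ that can occur as spanning pieces are $K_1$, $K_2$, and $C_4$; but $C_5$ contains no $C_4$, and any $3$-vertex subgraph of $C_5$ (namely $P_3$, $K_2\cup K_1$, or $3K_1$) has $q_M\ge 3$ by Theorem~\ref{thm:oplus}. Hence $\theta^*(C_5,2)=3$, and Lemma~\ref{chi*} with $\ell=2$ gives only $q(C_5)\le 6$, even though $q(C_5)=3$. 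The paper handles $|G|\le 6$ by entirely separate direct arguments: for $|G|\le 5$, if $q(G^c)\ge 5$ then Theorem~\ref{thm:fallat} forces $G^c=P_5$, so $G=P_5^c$, which is not unicyclic; for $|G|=6$, the edge count $15=|E(K_6)|=|E(G)|+|E(G^c)|\le 6+6$ yields a contradiction. You need analogous replacements for these small orders before the partition argument (restricted to $|G|\ge 7$) can finish the proof.
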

  
  \begin{proof}
 Let $G\ne T_{2,2}$ be a unicyclic graph, so that $G$ contains a cycle $C_t$ for some unique
 $t\ge3$. If $t$ is even, then $G$ is bipartite and  the statement follows by Corollary~\ref{cor:bipartite}.
  
 If $|G|\le 5$ and $q(G^c)\ge5$, then $q(G^c)=5$ so $G^c=P_5$ and $G=P_5^c$ which is not unicyclic, a contradiction. Now let $G$ be a unicyclic graph with $|G|=6$ and $q(G^c)\geq 5$. Then $G^c$ is one of the graphs listed in Theorem~\ref{thm:fallat}, so it is either a forest or is unicyclic, hence $|E(G^c)|\leq 6$. 
As $G$ is unicyclic, we have $|E(G)|\leq 6$, so
\[
  15 = |E(K_6)| = |E(G)|+|E(G^c)| \le 12,
\] a contradiction.

In the remaining cases, $t$ is odd and $|G| \geq 7$. Then there exists
a 3-colouring of $G$ where $c_1$ vertices are coloured black, $c_2$
vertices are coloured white and one vertex is coloured red; since
$c_1+c_2+1\geq 7$, we may assume that $c_1 \geq 3$. The black vertices
together with the red vertex form $K_{c_1+1}\setminus\{e\}$ in $G^c$,
and the white vertices correspond to $K_{c_2}$ in $G^c$. Hence,
$K_{c_2}\cup (K_{c_1+1}\setminus\{e\})$ is a spanning subgraph in
$G^c$. Since $c_1+1 \ge 4$, we have
$q_M(K_{c_2})\leq q_M(K_{c_1+1}\setminus\{e\})=2$
by~\cite[Corollary~2.8]{2017arXiv170801821B}, and so
$\theta^*(G^c, 2)\leq 2$.  By Lemma~\ref{chi*}, we have $q(G^c)\leq 4$
as desired, and Corollary~\ref{cor:q leq 4} completes the proof.
  \end{proof}

The bound~\eqref{eq:cyclebound} implies that Conjecture~\ref{conjecture} holds if~$G$ is a graph for which both $c(G)$ and $c(G^c)$ are close to $n$. The following conditions are precisely those in which the conjecture is resolved in this way. (We omit the routine proof.)

\begin{proposition}\label{prop:56}
Conjecture~\ref{conjecture} holds for graphs $G$ on $n$ vertices that satisfy one of the following inequalities:
\begin{enumerate}
\item $n$ is odd, $c(G) \geq n-3$ and $c(G^c) \geq n-1$, 
\item $n$ is even and $\min\{c(G),c(G^c)\} \geq n-2$,
\item $n$ is even,  $c(G) \geq n-3$, and $c(G^c)=n$. 
\end{enumerate}
\end{proposition}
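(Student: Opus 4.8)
The plan is to apply the circumference bound~\eqref{eq:cyclebound} to both $G$ and $G^c$, add the two resulting inequalities, and then extract the conclusion using that $m\mapsto\lfloor m/2\rfloor$ is nondecreasing. Write $n=|G|$. By~\eqref{eq:cyclebound} we have $q(G)\le n-\lfloor c(G)/2\rfloor$ and $q(G^c)\le n-\lfloor c(G^c)/2\rfloor$, so
\[
  q(G)+q(G^c)\le 2n-\lfloor c(G)/2\rfloor-\lfloor c(G^c)/2\rfloor.
\]
Thus inequality~\eqref{eq:conjecture} will follow once we check that
\[
  \lfloor c(G)/2\rfloor+\lfloor c(G^c)/2\rfloor\ge n-2
\]
holds under each of the three sets of hypotheses, and the whole proof reduces to this bookkeeping.

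To verify the displayed inequality I would simply feed the given lower bounds on $c(G)$ and $c(G^c)$ through the nondecreasing function $m\mapsto\lfloor m/2\rfloor$. In case~(1), with $n$ odd, this gives $\lfloor c(G)/2\rfloor\ge(n-3)/2$ and $\lfloor c(G^c)/2\rfloor\ge(n-1)/2$, which sum to $n-2$. In case~(2), with $n$ even, each of the two terms is at least $(n-2)/2$, again summing to $n-2$. In case~(3), with $n$ even, the two terms are at least $(n-4)/2$ and $n/2$, and once more the sum is $n-2$. In every case $q(G)+q(G^c)\le 2n-(n-2)=n+2$, which is~\eqref{eq:conjecture}.

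Finally, I would record that none of the four graphs of Figure~\ref{fig:exceptions} satisfies any of the three hypotheses, so that establishing~\eqref{eq:conjecture} for the graphs in question does confirm Conjecture~\ref{conjecture} for them: $P_4$ and $P_5$ are acyclic, whence $c(G)=0<n-3$; and $P_5^c$ and $T_{2,2}$ both have $n=5$, so only hypothesis~(1) is even conceivable, yet $c((P_5^c)^c)=c(P_5)=0$ and $c(T_{2,2}^c)=c(T_{2,2})=3$ are both strictly smaller than $n-1=4$. I do not expect any genuine obstacle here; the one point requiring a little care is the floor/parity bookkeeping, which is precisely why hypotheses~(1) and~(3) call for the stronger bounds $c(G^c)\ge n-1$, respectively $c(G^c)=n$, rather than merely $c(G^c)\ge n-3$: in case~(1), for instance, two bounds of the shape $c\ge n-3$ would only yield $\lfloor c(G)/2\rfloor+\lfloor c(G^c)/2\rfloor\ge n-3$, which is one short of the required $n-2$.
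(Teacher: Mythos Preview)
Your argument is correct and is precisely the routine verification the paper has in mind; indeed, the paper omits the proof entirely with the remark ``We omit the routine proof.'' Your added observation that none of the four exceptional graphs in Figure~\ref{fig:exceptions} satisfies any of the three hypotheses is a nice touch, since ``Conjecture~\ref{conjecture} holds for $G$'' requires both that~\eqref{eq:conjecture} is satisfied and that $G$ is not one of the exceptions.
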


Finally, we prove the conjecture for all graphs with at most $7$ vertices. 

\begin{theorem}\label{lemma:67}
  If $G$ is any graph except $P_4$, $P_5$, $P_5^c$ or $T_{2,2}$  with at most 7 vertices, then $q(G)+q(G^c)\leq |G|+2$.
\end{theorem}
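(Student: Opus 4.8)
The plan is to reduce the problem to a finite check by exploiting the structural results already in hand. The inequality $q(G)+q(G^c)\le|G|+2$ is trivially satisfied whenever $q(G)\le|G|-2$ and $q(G^c)\le 4$, by Corollary~\ref{cor:q leq 4}; likewise it holds whenever $q(G)\ge|G|-1$ and $G\notin\{P_4,P_5,T_{2,2}\}$ by Theorem~\ref{thm:n-1}, and symmetrically in $G^c$. So the only graphs that could possibly violate the bound are those for which \emph{both} $q(G)\ge 5$ and $q(G^c)\ge 5$. For $|G|\le 5$ this is impossible, since $q(H)\le|H|$ forces $q(G)=q(G^c)=5$ only when $G=G^c=P_5$, which is absurd as $P_5\ne P_5^c$; thus the statement is automatic for $|G|\le5$, and $P_4,P_5$ are the only genuine exceptions there.

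For $|G|=6$ the argument is a short counting step: if $q(G)\ge5$ and $q(G^c)\ge5$ then, since $q\le|G|$, neither can equal $|G|$ (that would force the graph to be $P_6$, whose complement has $q(P_6^c)=2$ by Proposition~\ref{prop:S4}), so $q(G)=q(G^c)=5=|G|-1$. Then both $G$ and $G^c$ appear on the list of Theorem~\ref{thm:fallat}, hence each is a forest or unicyclic, so $|E(G)|\le6$ and $|E(G^c)|\le6$; but $|E(G)|+|E(G^c)|=\binom{6}{2}=15>12$, a contradiction. Hence every $6$-vertex graph except $P_5$ (as an induced case, but $P_5$ has $5$ vertices) — that is, every $6$-vertex graph with no exceptions — satisfies the bound.

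For $|G|=7$, the task is to rule out $q(G)\ge5$ and $q(G^c)\ge5$ occurring simultaneously, save for $P_5^c$ (which has $7$ vertices: $q(P_5^c)=3$ but $q((P_5^c)^c)=q(P_5)=5$, and $3+5=8>7+2$? no, $8=7+1$, so in fact $P_5^c$ on $7$ vertices is \emph{not} an exception — the exception $P_5^c$ in the theorem statement refers to the $5$-vertex graph). So for $|G|=7$ I would argue: if $q(G)\ge5$ and $q(G^c)\ge5$, then by Theorem~\ref{thm:n-1} both $G$ and $G^c$ lie on the Theorem~\ref{thm:fallat} list with $q=|G|-1$ or $|G|$ precisely when the bound could fail; one checks that $q(G)=|G|$ forces $G=P_7$ with $q(P_7^c)=2$ (Theorem~\ref{thm:treeC}), and if $q(G)=q(G^c)=6=|G|-1$ then as before both are forests or unicyclic, giving $|E(G)|+|E(G^c)|\le 7+7=14<\binom72=21$, again impossible. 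The one case that needs separate care is when exactly one of $q(G),q(G^c)$ equals $5$ and the other is $\ge5$ but the graph is \emph{not} on the Fallat list (so $q\le|G|-2$); but then $q(G)+q(G^c)\le (|G|-2)+ 5 = |G|+3$, which is one too many, so these must be eliminated by hand — the graphs with $q=5$ on $7$ vertices are limited, and each can be checked (e.g.\ via the enumeration in~\cite{2017arXiv170801821B}) to have complement with $q\le 4$. I expect this last finite verification — pinning down exactly which $7$-vertex graphs $G$ have $q(G)=5$ and confirming $q(G^c)\le4$ for all of them — to be the main obstacle, though it should follow cleanly from the classification of small graphs with large $q$ already in the literature cited, combined with Corollary~\ref{cor:q leq 4} and Proposition~\ref{prop:56}.
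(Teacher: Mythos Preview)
Your reduction for $|G|\le 6$ is correct (and in fact the paper handles it even more quickly: for $|G|\le 6$, either $q(G)\le 4$ and Corollary~\ref{cor:q leq 4} applies, or $q(G)\ge 5\ge |G|-1$ and Theorem~\ref{thm:n-1} applies directly, so no edge-counting is needed). Your confusion about ``$P_5^c$ on $7$ vertices'' is a slip; $P_5^c$ has five vertices and is already dealt with in the $|G|\le 5$ case.

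The genuine gap is in the $|G|=7$ case. You correctly reduce to ruling out $q(G)=q(G^c)=5$, but then you defer the actual work, asserting it ``should follow cleanly from the classification of small graphs with large $q$ already in the literature cited''. No such classification exists for $7$-vertex graphs with $q=5$; the cited sources classify graphs with $q\ge |G|-1$ (Theorem~\ref{thm:fallat}) and compute $q$ for all $6$-vertex graphs, but neither gives you the $7$-vertex, $q=5$ list you need. This is precisely the hard step of the theorem, and the paper does \emph{not} dispatch it by enumeration of $q$-values. Instead it uses the cycle bound~\eqref{eq:cyclebound} to deduce $c(G),c(G^c)\le 5$, runs a computer search to find that exactly $24$ complementary pairs on $7$ vertices have this property, observes that in each such pair one graph is of the form $\jdup(H,v)$ for a $6$-vertex $H$, and then applies Lemma~\ref{prop:dup} together with Theorem~\ref{thm:fallat} on $H$ to force a $6$-cycle in $G^c$, contradicting $c(G^c)\le 5$. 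The paper explicitly remarks afterwards that it was unable to avoid this computer-aided case analysis, so your expectation that the verification is routine is not borne out.
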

  \begin{proof}
    If $|G|\le 6$, then either $q(G)\leq 4$ or $q(G)\geq 5\geq |G|-1$, so
    these cases are covered by Corollary~\ref{cor:q leq 4} and
    Theorem~\ref{thm:n-1}, respectively.
    
     Suppose now that $G$ is a counterexample to the conjecture with
    $7$ vertices. Then $q(G)>4$ and $q(G^c)>4$ by Corollary~\ref{cor:q leq 4}, and by~\eqref{eq:cyclebound} we have $c(G)\le 5$ and $c(G^c)\le 5$, i.e., neither $G$ nor $G^c$ can contain a $6$-cycle or a $7$-cycle as a subgraph.  
    
    A  computer search shows that there are $24$ pairs %
        $\{G,G^c\}$ of
    graphs of order~$7$ for which neither contains a $6$-cycle or a
    $7$-cycle, and that in every case, one graph in the pair, say~$G$,
    contains a pair of joined duplicate vertices $v,w$. Then $G=\jdup(H,v)$ for
    $H=G\setminus\{w\}$ with $|H|=6$, so
    \[5\leq q(G)=q(\jdup(H,v))\leq q(H)\] by
    Lemma~\ref{prop:dup}. It follows easily from Theorem~\ref{thm:fallat} that if $H$ is a graph with $|H|=6$ and $q(H)\ge 5$,
    then $H^c$ must
    contain a $6$-cycle as a subgraph, so $G^c=\dup(H^c,v)$ also contains a
    $6$-cycle; a contradiction.
  \end{proof}

  \begin{remark}
    We have been unable to adapt this proof to larger graphs. If $G$
    were a counterexample with $|G|=8$, then Corollary~\ref{cor:q leq 4} and
    the cycle bound (\ref{eq:cyclebound}) give $c(G)<8$, i.e., $G$
    contains no Hamiltonian cycle. The same holds for $G^c$, and
    Proposition~\ref{prop:56} may then be used to reduce the number of
    cases under consideration to $323$ possible pairs
    $\{G,G^c\}$. However, here the argument appears to break down
    because the gap between $|G|=8$ and our constraint $q(G)\ge 5$ is
    too large for us to apply Theorem~\ref{thm:fallat} in the same
    way as above.
  \end{remark}

\begin{remark}
 If a graph $G$ with at least $8$ 
vertices is a counterexample  to Conjecture~\ref{conjecture}, then we must have a strict inequality in~\eqref{eq:Fallat} for either $G$ or $G^c$. 
To see this, suppose that
\[d(G)+1=q(G)\geq q(G^c)=d(G^c)+1\] and \[q(G)+q(G^c)\geq |G|+3\ge 11.\] We must have $q(G)\geq 6$, so $\diam(G)\ge d(G)\ge 5$ which implies that $\diam(G^c)\le 2$ (see
e.g.,~\cite[Exercise~1.6.12]{MR0411988}). Hence \[q(G^c)=d(G^c)+1\le \diam(G^c)+1\le 3,\] which is a contradiction by Corollary~\ref{cor:q leq 4}.
\end{remark}

\begin{remark}
In this final remark we suggest a few open problems, that can be viewed as stepping stones towards a full resolution of Conjecture~\ref{conjecture}.
\begin{enumerate}
\item  Find an alternative proof of Theorem~\ref{lemma:67} that does not depend on the computer-aided case by case analysis above. This may suggest how to resolve the conjecture in some other cases. 
\item Prove that the following join inequality holds:$$q(G \vee H) \leq q(G)+q(H).$$ 
Since $q(G\cup H)\le q(G)+q(H)-2$ and $(G\cup H)^c=G^c\vee H^c$, 
this would allow us in many cases to reduce the conjecture to connected graphs $G$ and $G^c$.
\item Find matrices with the SMP that achieve $q(G)$ for families of graphs $G$ considered in this paper, such as  the complements of trees. This would enable us to use powerful tools developed in~\cite{MR3665573} to advance the conjecture. 
\end{enumerate}
\end{remark}

\bibliographystyle{amsplain}
\bibliography{qqCref}
\end{document}